\documentclass[preprint,11pt,authoryear]{elsarticle}
\usepackage[T1]{fontenc}
\usepackage{amsmath}
\usepackage{amssymb}
\usepackage{amsthm}
\usepackage{amsfonts}
\usepackage{mathtools}
\usepackage{tikz}
\usepackage{subcaption}

\usepackage[margin=2.5cm,a4paper]{geometry}
\usepackage{enumitem}
\usetikzlibrary{arrows.meta}
\usetikzlibrary{positioning}

\usepackage{enumitem}
\setlist{noitemsep,nolistsep}

\newcommand{\tluste}[1]{\mbox{\mathversion{bold}$ #1 $}}
\newcommand{\mna}[1]{{\mathcal{#1}}}

\newcommand{\onum}[1]{\mbox{$\overline{{#1}}$}} 
\newcommand{\ub}[1]{\onum{#1}}
\newcommand{\unum}[1]{\mbox{$\underline{{#1}}$}} 
\newcommand{\lb}[1]{\unum{#1}}
\newcommand{\inum}[1]{\mbox{$\tluste{#1}$}} 
\newcommand{\R}[0]{{\mathbb{R}}}

\newcommand{\N}[0]{{\mathbb{N}}}

\newcommand{\IR}[0]{{\mathbb{IR}}}
\newcommand{\MR}[1]{{#1}}

\newcommand{\mmid}[0]{\mid}	
\newcommand{\inter}[0]{\mathop{\mathrm{int}}}	%interior

\newcommand{\seznam}[1]{{\{1, \ldots, {#1}\}}}

\def\clqq{``}
\def\crqq{''}
\def\quo#1{\clqq{}#1\crqq{}}

\def\cw{c_w} %worst case value
    \tikzset{
        ordnode/.style={
            circle,
            minimum size=9mm,
            inner sep=0mm,
            draw,
            font=\small,
        },
    }

\usepackage[colorlinks]{hyperref}
\usepackage[capitalise,noabbrev]{cleveref}
\usepackage{microtype}

\newtheorem{theorem}{Theorem}

\newtheorem{proposition}[theorem]{Proposition}

\newtheorem{lemma}[theorem]{Lemma}
\newtheorem{corollary}[theorem]{Corollary}

\theoremstyle{definition}

\newtheorem{example}{Example}
\newtheorem*{motivation*}{Motivation}

\usepackage{setspace}
\AtBeginEnvironment{theorem}{\onehalfspacing}
\AtBeginEnvironment{assertion}{\onehalfspacing}
\AtBeginEnvironment{proposition}{\onehalfspacing}
\AtBeginEnvironment{observation}{\onehalfspacing}
\AtBeginEnvironment{lemma}{\onehalfspacing}
\AtBeginEnvironment{corollary}{\onehalfspacing}
\AtBeginEnvironment{conjecture}{\onehalfspacing}
\AtBeginEnvironment{open}{\onehalfspacing}
\AtBeginEnvironment{definition}{\onehalfspacing}
\AtBeginEnvironment{example}{\onehalfspacing}
\AtBeginEnvironment{remark}{\onehalfspacing}
\AtBeginEnvironment{proof}{\onehalfspacing}

\journal{arXiv}

\begin{document}
\onehalfspacing

\title{Minimum cost network flow with interval capacities:\\ The worst-case scenario}

\author[1,2]{Miroslav Rada\corref{cor1}}
\ead{miroslav.rada@vse.cz}

\author[3]{Milan Hladík}
\ead{hladik@kam.mff.cuni.cz}

\author[1,4]{Elif Radová Garajová}
\ead{elif.garajova@vse.cz}

\author[5]{Francesco Carrabs}
\ead{fcarrabs@unisa.it}

\author[5]{Raffaele Cerulli}
\ead{raffaele@unisa.it}

\author[5]{Ciriaco D'Ambrosio}
\ead{cdambrosio@unisa.it}

\affiliation[1]{organization={Prague University of Economics and Business, Faculty of Informatics and Statistics, Department of Econometrics},
addressline={nám. W. Churchilla 1938/4},
city={Prague},
postcode={130 67},
country={Czech Republic}}

\affiliation[2]{organization={Prague University of Economics and Business,  Faculty of Finance and Accounting, Department of Financial Accounting and Auditing},
addressline={nám. W. Churchilla 1938/4},
city={Prague},
postcode={130 67},
country={Czech Republic}}

\affiliation[3]{organization={Charles University, Faculty of Mathematics and Physics, Department of Applied Mathematics},
addressline={Malostranské nám.~2/25},
postcode={118 00},
city={Prague},
country={Czech Republic}}

\affiliation[4]{organization={Czech University of Life Sciences Prague, Faculty of Economics and Management, Department of Systems Engineering},
addressline={Kamýcká~129},
city={Prague},
postcode={165 00},
country={Czech Republic}}

\affiliation[5]{organization={University of Salerno, Department of Mathematics},
addressline={Via Giovanni Paolo II, 132},
city={Fisciano (SA)},
postcode={84084},
country={Italy}}

\cortext[cor1]{Corresponding author.}

\begin{abstract}
We study the problem of determining the worst optimal value and characterizing the corresponding worst-case scenarios in minimum cost network flow problems with interval uncertainty in arc capacities. In this setting, each capacity can take any value within its specified lower and upper bounds. We prove that computing the worst optimal value is a strongly NP-hard problem and remains NP-hard even when restricted to series–parallel graphs. Further, we propose a mixed-integer linear programming formulation that computes the exact worst optimal value, as well as a pseudopolynomial-time algorithm designed for the special case of series–parallel graphs.
We also examine the structural properties of the most extremal worst-case scenarios and show that the arcs whose capacities are not fixed at their interval bounds form a forest. This result establishes an upper bound on the number of such arcs, which we show to be tight by constructing a class of instances in which the bound is attained.
Finally, we investigate the more-for-less paradox in minimum cost network flow problems with interval capacities, which occurs in instances where increasing the required flow leads to a decrease in the worst-case optimal cost. We provide a general characterization of this phenomenon using augmenting paths and establish a stronger characterization for complete graphs. In addition, we discuss the properties of the cost matrices immune against the paradox and prove that deciding whether a given cost matrix has this property is a strongly co-NP-hard problem.
\end{abstract}

\begin{keyword}
Networks \sep Uncertainty modelling \sep Minimum cost flow \sep Interval uncertainty \sep More-for-less paradox
\end{keyword}

\maketitle

%%%%%%%%%%%%%%%%%%%%%%%%%%%%%%%%%%%%%%%%%%%%%%%%%%%%%%%%%%%%%%% 
\section{Introduction}

\paragraph{The minimum cost network flow}The minimum cost network flow problem is one of the fundamental models used in operations research and combinatorial optimization. The goal of the model is to find the most cost-efficient way to transport a given amount of flow from a source to a sink in a network while respecting the capacity constraints on the arcs. Pioneered by the seminal work of \citet{Ford:FlowsNetworks:1962}, the minimum cost network flow problem has since been the subject of extensive theoretical and algorithmic research. A comprehensive overview of its theoretical properties and efficient solution techniques can be found in the classical book by \citet{ahuja_network_1993}. Over the years, the model has found a wide range of practical applications, including logistics~\citep{Krile:ApplicationMinimumCost:2004}, healthcare~\citep{Slump:NetworkFlowApproach:1982}, or scheduling~\citep{Su:MinimumcostNetworkFlow:2013}.

\paragraph{Involving uncertainty}
However, in many practical optimization problems, the input data of the model are subject to uncertainty and may not be known with complete accuracy. To address this issue, various modeling frameworks have been developed to handle networks with uncertain parameters, aiming to identify solutions that perform well across a range of possible realizations. Prominent approaches to handling uncertain network flows include stochastic programming \citep{Estes:FacetsStochasticNetwork:2020}, robust optimization \citep{Mao:RobustDiscreteOptimization:2009} or fuzzy programming \citep{Ghatee:GeneralizedMinimalCost:2008}. 

\paragraph{Worst case over feasible scenarios}
This paper contributes to the line of research dealing with uncertainty in network flows by analyzing the worst-case behavior of the minimum cost network flow problem in which arc capacities are affected by interval uncertainty and may vary within prescribed lower and upper bounds while preserving feasibility of the requested flow through the network. 

\paragraph{Motivation 1: Incomparability of the feasible and infeasible scenarios}
The problem considered in this paper can be used to model and analyze adversarial uncertainty in network throughput. The restriction of the computation of the worst optimal value to feasible scenarios is suitable for applications in which infeasible scenarios correspond to a system collapse that is handled by mechanisms other than operational cost optimization (e.g. blackouts in energy networks, defaults in contractual obligations, or breakdowns of critical supply chains). In such contexts, cost optimization is only meaningful under continued operation.

Consider a freight transportation network with a given amount of flow to be transported from a source node to a sink at minimum cost. Each arc has an associated uncertain capacity, known to lie in a given interval, reflecting potential capacity degradation due to congestion, weather-related disruptions, or maintenance activities. For a given scenario determined by a realization of the capacities, the operator solves a standard minimum cost flow problem. While some capacity realizations correspond to infeasible scenarios representing network breakdowns in which transportation demand cannot be satisfied, in practice such situations are governed by emergency procedures (such as shipment cancellation, demand rationing, or a contractual force majeure clause), which are handled outside the minimum cost flow model. For this reason, we consider the worst finite optimal value, which represents the highest transportation cost over the scenarios in which the capacity degradations do not lead to a complete service failure and the network remains operational. This modeling framework provides a measure of network performance under interval uncertainty, distinguishing between cost escalation under operational stress and complete system collapse.

\paragraph{Motivation 2: Valuation of infeasibility} The worst finite optimal value also admits a natural interpretation in the context of contractual design for network problems under uncertainty in capacities. Consider a setting in which a service provider is contractually obligated to satisfy a given demand for transportation, with non-fulfillment resulting in a fixed penalty or fine. Capacity realizations that render the underlying network infeasible correspond to contract breach and trigger the penalty, whereas feasible realizations lead to an operational cost determined by a minimum cost flow solution. By evaluating the worst finite optimal value over all feasible scenarios, a contract designer can assess the maximum operational cost that may arise without incurring a breach, and thus determine a suitable value for the penalty associated with infeasibility. Conversely, from the service provider’s perspective, this value enables a comparison between the worst-case operational cost under feasible capacity degradations and the fine incurred under infeasibility, thereby informing the decision of whether entering the contract is economically justified.

\paragraph{Motivation 3: Network interdiction problem with feasibility-interested leader}
The network interdiction problem can be viewed as a zero-sum Stackelberg-type game. The leader chooses parameters of the network so as to restrict the follower’s feasible region, thereby forcing the follower’s optimal solution to be as unfavorable for him as possible, thus the most favorable for the leader. The framework of the network interdiction was thoroughly reviewed by \cite{Smith2013,Smith:SurveyNetworkInterdiction:2020}. In the formulations therein, the leader chooses the decision regardless of making the problem of the follower (in)feasible, thus not distinguishing between the worst case and the worst \emph{finite} case. This contrasts with our setup, where we allow only such decisions of leader for which the problem of the follower remains feasible. We provide two examples of network interdiction problems from the literature, where such a constraint occurs.

First, \cite{Hoppmann:FindingMaximumMinimum:2018}, apply the interdiction framework to German gas network, assuming the follower to be a network operator that wants to transport gas in the cheapest way possible, and the leader is a user of the network that sets supplies and demands of sources and sinks. Moreover, the contract between the network operator and the user requires the total supply and total demand to be balanced, thus ensuring feasibility of the corresponding network flow problem. In our setup, we have  interval capacities on arcs instead of interval supplies and demands (in turn: interval amount of flow). One can imagine that the cost of an arc means the extent of exhibition of media being transferred in the network to some (un)desirable effect (e.g. exposure to advertisement in transportation or communication networks or risk of police surveillance in drug trafficking networks). The leader wants to set capacities such that the total extent of exhibition is the largest possible, while the follower wants to minimize the exhibition. 

Second, \cite{wollmer:1970:algorithmsfortargetingstrikes} addressed the problem of targeting strikes in a lines-of-communication network (LOCP). The LOCP aims to strike parts of the network on the basis of the degree to which they hinder the combat force or the LOC user in achieving their goal. The capacities of the arcs depend on the number of strikes directed against them, and the goal of the LOC user is to achieve a minimum-cost flow from a source to a destination according to the capacities updated after the strikes. The setup in this paper can be modelled as the LOCP formulation, which is in fact more general in that it allows, for example, repairs of struck arcs or changes in arc costs induced by strikes. By contrast, our results cover a broader scope of properties of the problem and dive deeper in the problem, in particular, we discuss the \emph{more-for-less paradox} and its implications for the complexity of the problem.

\paragraph{Outcomes of our paper}We study the properties of the worst optimal value of a minimum cost network flow problem with interval capacities, analyze the theoretical complexity of its computation, and propose exact methods for determining it. We also characterize the structure of the scenarios in which the worst optimal value is attained. Furthermore, we address the more-for-less paradox, a counterintuitive phenomenon in which increasing the required flow may result in a lower worst-case optimal cost. Together, these results offer both novel theoretical insights into network flow problems under uncertainty and algorithmic tools for their solution. 

\subsection{Related results}

The minimum cost network flow problem under interval uncertainty has been previously addressed in the literature in different contexts.

\cite{Hoppmann:2021:MinCostFlow} studied the maximization and minimization variants of the minimum cost flow problem within a bilevel programming framework. The formulation allowed multiple source and sink nodes with interval-valued supplies and demands. In contrast to our model, arc capacities were treated as fixed real values (or possibly infinite) rather than intervals. Based on the special case of the interval transportation problem (ITP), the author proved that computing the worst optimal cost among all scenarios is NP-hard for the considered model.
The aim to generalize the ITP is also one of the motivations for this work. Some of the fundamental results for the ITP were successfully extended or reused. The exact algorithm from \cref{secExactComp} based on decomposition by complementary slackness uses essentially the same idea as the MILP-based formulation for ITP in \cite{Garajova:IntervalTransportationProblem:2023}. The concept of quasi-extreme scenarios (\cref{pro:upper:bound:n-1,pro:lower:bound:n-1}), which allows to compute the worst finite optimal value for ITP using a finite reduction \citep{CARRABS2021102492,Garajova:QuasiextremeReductionInterval:2024}. The convexity result (\cref{pro:convexity}) stems from its analogy in \cite{DAmbrosio:OptimalValueRange:2020}. 

\cite{HASHEMI20061200} examined a variant of the minimum cost flow problem with interval costs and fixed capacities. In the paper, they extended the classical combinatorial algorithms to the interval setting and solved the problem with respect to a complete ordering on intervals. 

\cite{Kasperski2008} considered the closely related problem of finding the minimum $s$--$t$ cut in a flow network with interval weights. The author surveyed and analyzed the min–max regret approach and investigated the algorithmic and complexity-theoretic properties of the model, proving NP-hardness of the problem for several classes of graphs. The monograph also discusses other discrete network optimization problems under interval uncertainty.

\cite{Singh-sensAnalysis} applied multiparametric sensitivity analysis to study the effects of perturbations in supply and demand, arc capacities, and costs in the minimum cost flow problem. They characterized the critical regions arising from simultaneous and independent parameter perturbations and also extended their analysis to multicommodity flows.

Minimum cost flow problems have also been investigated in the context of robust optimization. \cite{Busing:RobustMinimumCost:2022} analyzed the problem under consistent flow constraints with uncertain supply and demand. \cite{Chassein:ComplexityStrictRobust:2019} studied the robust integer minimum cost flow problem with uncertainty in the cost function. Both works also considered the class of series–parallel graphs and analyzed the computational complexity of the problem in this special case, which is also considered in this paper (\cref{propNpSerParGraph}).

Finally, \cite{MCFparadox} explored the more-for-less paradox in minimum cost network flow problems under perturbations. 
In their formulation, the initial arc capacities were given, and the central question was whether increasing certain capacities could lead to a reduction in the total minimum cost. In contrast, our analysis of the paradox only assumes the network structure and the arc costs as the input (\cref{thm:paradox}). Another related phenomenon was investigated by \cite{cenciarelli_polynomial-time_2019}, who derived a characterization of traffic networks exhibiting Braess' paradox. 

\subsection{Structure of the paper}
The remainder of the paper is organized as follows: 
\cref{sec:problem:form} formally introduces the minimum cost network flow problem with interval capacities, along with the relevant notation and definitions.
\cref{sec:properties} studies the fundamental properties of the model, focusing on the worst optimal value, the worst scenario and the set of feasible capacities.
\cref{sec:complexity} addresses the computational complexity of the problem, proving strong NP-hardness in the general case and NP-hardness for the special case of series-parallel graphs.
\cref{secExactComp} presents a mixed-integer linear programming formulation for computing the worst optimal value, as well as a pseudopolynomial algorithm for series-parallel graphs.
\cref{sec:worst:scen} examines the structural properties of the worst-case scenario and provides a bound on the number of arcs whose capacities are not set to their respective bounds.
\cref{sec:paradox} discusses the more-for-less paradox, including its characterization for general and complete graphs, and explores the properties of cost matrices that are immune to the paradox.
Finally, \cref{sec:conclusion} summarizes the main results, concludes the paper, and outlines possible directions for future research.

%%%%%%%%%%%%%%%%%%%%%%%%%%%%%%%%%%%%%%%%%%%%%%%%%%%%%%%%%%%%%%% 

\section{Problem Formulation}\label{sec:problem:form}

Let us first review the standard linear programming model of a minimum cost flow problem and introduce the necessary notation. Then, we incorporate interval-valued uncertainty affecting the arc capacity into the model and we formulate the problems addressed in this paper: computing the worst optimal value and characterizing the corresponding worst scenario.

Hereinafter, we denote by $\N$ the set of natural numbers including $0$, i.e. the set of all nonnegative integers. Similarly, the symbol $\R^+$ stands for the set of all nonnegative real numbers.

\paragraph{Minimum cost flow} 
Consider a directed graph $G=(V,E)$ with $n \coloneqq |V|$ nodes and $m \coloneqq |E|$ arcs. Further, let the following be given:
\begin{itemize}
    \item a source node $s\in V$,
    \item a sink node $t\in V$,
    \item arc capacities $u\colon E\to\R^+$; the value $u_e$ or $u_{ij}$ denotes the capacity of a given arc $e = (i,j) \in E$,
    \item cost function $c\colon E\to\R^+$; the value $c_e$ or $c_{ij}$ denotes the cost of sending a unit of flow along an arc $e = (i,j)\in E$,
    \item the amount of flow $f\in\N$ to be sent from $s$ to~$t$ (w.l.o.g assumed to be positive).
\end{itemize}
The goal of the minimum cost flow problem is to transport a flow of amount $f$ from the source $s$ to the sink $t$ with the minimal cost. Formally, the problem is stated as follows:
\begin{subequations}\label{lpFlow}
\begin{alignat}{3}
&\min \rlap{$\displaystyle\sum_{(i,j)\in E} c_{ij}x_{ij}$} \\ 
&\mbox{subject to} &
\label{lpFlow2}
 \sum_{j:(i,j)\in E} x_{ij} - \sum_{j:(j,i)\in E} x_{{ji}} &= 0,\qquad\qquad   &&\forall i\in V \setminus \{s,t\},\\
&&\label{lpFlow3} \sum_{j:(s,j)\in E} x_{sj} - \sum_{j:(j,s)\in E} x_{{js}} &= f,\\
&&\label{lpFlow4} \sum_{j:(t,j)\in E} x_{tj} - \sum_{j:(j,t)\in E} x_{{jt}} &= -f,\\
&&\label{lpFlow5} 0 \leq x_{ij} &\leq u_{ij},\quad &&\forall (i,j)\in E.
\end{alignat}
\end{subequations}

\paragraph{Interval capacities}
We consider a generalization of the network flow model, in which the arc capacities may be subject to interval uncertainty. 
Given two values $\lb{x}, \ub{x} \in \R$ satisfying $\lb{x} \le \ub{x}$, we define an interval $\inum{x} = [\lb{x}, \ub{x}]$ as the set $\{ x \in \R : \lb{x} \le x \le \ub{x} \},$ where $\lb{x}$ and $\ub{x}$ is the lower bound and the upper bound of the interval~$\inum{x}$, respectively.
The symbol $\IR$ is used to denote the set of all intervals,
and interval quantities are denoted by bold letters. 
Now, we can consider interval capacities on the arcs given by a mapping $\inum{u}\colon E\to\IR$ associated with a lower-bound mapping $\lb{u}\colon E\to\R^+$ and an upper-bound mapping $\ub{u}\colon E\to\R^+$ satisfying $\inum{u}_e = [\lb{u}_e, \ub{u}_e]$ for each arc $e \in E$. Without loss of generality assume that the lower and upper bounds determined by $\unum{u}$ and $\onum{u}$ are nonnegative integers.

\paragraph{Scenarios}
We use the term ``scenario'' to refer to a particular minimum cost flow problem instance or to a choice of arc capacities $u \in \inum{u}$ determining such an instance. Here, the choice $u \in \inum{u}$ means any function $u\colon E \to \R^+$ satisfying $u_e \in \inum{u}_e$ for each $e \in E$. Moreover, we denote by $G_u$ a particular instance of the graph with capacities $u\in\inum{u}$ and by $c(u)$ the corresponding minimum cost of the optimal flow.

\paragraph{The worst optimal value and the worst scenario}
The main problem addressed in this paper is to find a feasible scenario of the minimum cost flow problem with interval capacities, which yields the highest optimal value. 
Denote by $\mna{U}$ the set of feasible capacities with respect to a given flow $f$, i.e. the set
\begin{align*}
\mna{U}=\{u\in\inum{u}\mmid G_u\mbox{ admits a feasible flow of size $f$}\}.    
\end{align*}
\MR{Note that $\mna{U} \neq \emptyset$ if and only if the scenario with capacities $\onum{u}$ admits a flow of size $f$. Thus we can, without loss of generality, assume there exist feasible capacities for the given minimum cost flow problem, since this property can be efficiently verified in advance.}
Formally, we define the worst optimal value of the minimum cost flow problem with interval capacities $\inum{u}$ as the value
\begin{align*}
\cw\coloneqq\max\ c(u) \ \ \mbox{subject to}\ \ u\in\mna{U}.
\end{align*}
Moreover, we also refer to any scenario $u \in \mna{U}$, for which $c_w$ is achieved as the optimal value, as the worst scenario of the problem.

%%%%%%%%%%%%%%%%%%%%%%%%%%%%%%%%%%%%%%%%%%%%%%%%%%%%%%%%%%%%%%% 

\section{Basic properties of the model}\label{sec:properties}

We start our analysis of the minimum cost problem with interval capacities by studying some of the basic properties of the model related to the worst optimal value. Namely, we examine the properties of the optimal value function, the worst scenario and the set of all feasible scenarios.

\subsection{Set of feasible capacities}
In the definition of the worst optimal value $\cw$, we maximize the optimal value $c(u)$ subject to feasible capacities $u \in \mna{U}$. That is why, in this section, we focus on the properties of the set $\mna{U}$ of feasible capacities with respect to a given flow $f$.
We start by observing some fundamental geometric properties of the set~$\mna{U}$.

\begin{proposition}\label{pro:U:convex:polyhedron}
The set $\mna{U}$ is a convex polyhedral set.
\end{proposition}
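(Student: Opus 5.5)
The plan is to write $\mna{U}$ as the projection of a polyhedron in the joint space of capacities and flows. Define
\[
P=\{(u,x)\in\R^{E}\times\R^{E} \mmid \lb{u}\le u\le \ub{u},\ x \text{ satisfies } \eqref{lpFlow2}\text{--}\eqref{lpFlow4},\ 0\le x\le u\}.
\]
Every constraint here is a linear equality or a non-strict linear inequality in the variables $(u,x)$. In particular, the capacity constraint $x_{ij}\le u_{ij}$ is linear once $u$ is treated as a variable rather than a parameter. Hence $P$ is a convex polyhedron in $\R^{2m}$.

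Next we check that $\mna{U}$ is exactly the projection of $P$ onto the $u$-coordinates. By definition, $u\in\mna{U}$ if and only if $u\in\inum{u}$ and $G_u$ admits a flow of size $f$. That in turn holds if and only if some $x$ satisfies the constraints of \eqref{lpFlow} with capacities $u$, i.e. if and only if $(u,x)\in P$ for some $x$.

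We then invoke the standard Fourier--Motzkin fact that a linear projection of a polyhedron is again a polyhedron; one could equally cite Minkowski--Weyl. Here the projection is even bounded, since $u\in\inum{u}$, so $\mna{U}$ is a polytope. Convexity also follows directly: if $(u^1,x^1),(u^2,x^2)\in P$, then any convex combination of the two points lies in $P$, so the corresponding convex combination of $u^1,u^2$ is feasible.

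There is essentially no obstacle; the only subtlety is that polyhedrality of a projection requires the projection theorem and does not hold merely by definition. To get an explicit description, I would instead apply the max-flow min-cut theorem: $u\in\mna{U}$ if and only if $\lb{u}\le u\le\ub{u}$ and
\[
\sum_{e\in\delta^+(S)}u_e\ge f \quad\text{for every $s$--$t$ cut } S.
\]
This is a finite system of linear inequalities, which exhibits the polyhedron directly.
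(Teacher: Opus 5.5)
Your proposal is correct and matches the paper's proof: the paper likewise notes that $u\in\inum{u}$ is feasible if and only if some flow $x$ makes the pair $(u,x)$ satisfy the linear system \eqref{lpFlow2}--\eqref{lpFlow5}, so $\mna{U}$ is the projection of a polyhedron onto the $u$-coordinates. Your cut-based description, via max-flow min-cut, is also valid but not needed here; it is the same characterization the paper uses later in the proof of \cref{prop:U:exp}.
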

\begin{proof}
Obviously, a given capacity $u\in\inum{u}$ is feasible if and only if there is a flow $x$ such that the pair $x$, $u$ satisfies the linear system \eqref{lpFlow2}--\eqref{lpFlow5}. Thus we have a polynomial representation of the set of feasible pairs by a linear system and the set $\mna{U}$ itself forms a projection into the subspace of variables~$u$. Thus, the statement of the proposition follows.
\end{proof}

Furthermore, if capacities are integral, the same can be said about vertices of $\mathcal{U}$. We remind that the requested flow is assumed to be an integer throughout the paper.
\begin{lemma}\label{pro:integrality:of:vertices:of:U}
If $\unum{u}$, $\onum{u}$ and the requested flow $f$ are integral, then vertices of $\mathcal{U}$ have integral coordinates.  
\end{lemma}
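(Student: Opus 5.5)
We view $\mathcal{U}$ as the projection onto the $u$-coordinates of the polyhedron
\[
P=\{(x,u)\in\R^{E}\times\R^{E} : (x,u) \text{ satisfies \eqref{lpFlow2}--\eqref{lpFlow5}},\ \lb{u}\le u\le \ub{u}\},
\]
as in the proof of \cref{pro:U:convex:polyhedron}. Taking a vertex $\hat u$ of $\mathcal{U}$ and lifting it to a vertex of $P$ is not quite immediate, because the preimage of a vertex of a projection is a face of $P$, not necessarily a single point. We handle this as follows. Choose a cost vector $w$ for which $\hat u$ is the unique maximizer of $w^\top u$ over $\mathcal{U}$. The set of maximizers of $w^\top u$ over $P$ is then the nonempty face $F=\{(x,\hat u)\in P\}$. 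Since $0\le x\le \ub{u}$, the polyhedron $P$ is bounded, so $F$ is a polytope and has a vertex $(\hat x,\hat u)$, which is also a vertex of $P$. It therefore suffices to prove that every vertex of $P$ is integral.

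\textbf{Step 1 (structure of $P$).} Write $P$ as $\{(x,u): A x = b,\ x - u \le 0,\ -x\le 0,\ \lb{u}\le u\le \ub{u}\}$. Here $A$ is the node--arc incidence matrix of $G$, and $b$ has entries $f$, $-f$ and $0$, so $b$ is integral. Instead of checking total unimodularity of the full constraint matrix, we can argue directly about a vertex $(\hat x,\hat u)$.

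\textbf{Step 2 (fix $u$, then $x$).} The point $\hat x$ is a vertex of the flow polytope $\{x : Ax=b,\ 0\le x\le \hat u\}$. If it were not, we could write $\hat x=\tfrac12(x^1+x^2)$ with $x^1\neq x^2$ in that polytope, and then $(\hat x,\hat u)=\tfrac12\bigl((x^1,\hat u)+(x^2,\hat u)\bigr)$ would contradict extremality in $P$.

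\textbf{Step 3 (classify arcs).} For each arc $e$ exactly one of the following holds.
\begin{itemize}
\item $\hat u_e\in\{\lb{u}_e,\ub{u}_e\}$, so $\hat u_e$ is integral.
\item $\lb{u}_e<\hat u_e<\ub{u}_e$. In this case $\hat x_e=\hat u_e$ must hold. Otherwise $\hat u_e$ could be perturbed by $\pm\varepsilon$ while keeping $x$ fixed, contradicting extremality.
\end{itemize}
Thus every arc with a non-integral $\hat u_e$ has $\hat x_e=\hat u_e$. It remains to show that $\hat x$ is integral.

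\textbf{Step 4 (integrality of $\hat x$), the main obstacle.} Let $C$ be the set of arcs $e$ with $0<\hat x_e<\hat u_e$, or with $\hat x_e=\hat u_e$ where $\hat u_e$ lies strictly inside its interval. Call these the free arcs. If the free arcs contain an undirected cycle, push $\pm\varepsilon$ of flow around it, moving $u_e$ together with $x_e$ on the arcs of the second kind. Both directions stay in $P$, which contradicts that $(\hat x,\hat u)$ is a vertex. Hence the free arcs form a forest.

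Every non-free arc has $\hat x_e\in\{0,\lb{u}_e,\ub{u}_e\}$ (the flow sits at $0$ or at a capacity that equals an interval bound), so $\hat x_e$ is integral there. On a forest, the flow-conservation equations with integral right-hand side $b$ minus the integral non-free contributions determine the forest flows uniquely. Solving leaf by leaf shows these flows are integral. Consequently $\hat x$ is integral, and so is $\hat u_e=\hat x_e$ on the interior-capacity arcs. This gives integrality of $\hat u$.

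The delicate point is the cycle-perturbation argument, which must respect all inequality types at once. The classification in Step 3 is exactly what makes the perturbation feasible in both directions.
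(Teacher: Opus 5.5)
Your argument is correct, but it is organized differently from the paper's.

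\textbf{The paper's route.} The paper stays in $u$-space. It takes a non-integral $u\in\mathcal{U}$ with some feasible flow $x$, and first disposes of unsaturated non-integral capacities by perturbing them directly. Otherwise, it uses the degree-$\ge 2$ property of the subgraph of non-integral-flow arcs to find a cycle. Along that cycle it pushes $\pm\varepsilon$ of flow while dragging the non-integral (hence interior) capacities along, so that $u$ is the midpoint of $u\pm a$.

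\textbf{Your route.} You lift a vertex $\hat u$ to a vertex $(\hat x,\hat u)$ of the joint polytope $P$ via a supporting face. You then prove the stronger fact that all vertices of $P$ are integral: the same cycle-pushing move shows the free arcs form a forest, and leaf elimination in the conservation equations gives integrality.

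\textbf{What each buys.} The shared core is the cycle perturbation. Your lifting guarantees that any cycle of free arcs gives a nonzero move in $(x,u)$-space. In the paper's $u$-space version, the shift $a$ is supported only on those arcs of the chosen cycle whose capacity is non-integral. One must therefore ensure the cycle meets such an arc, e.g.\ by taking $x$ to be a vertex of the flow polytope for fixed $u$; the paper leaves this point implicit. The cost of your route is the extra projection step.

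Finally, as you hint, Steps 2--4 could be replaced by one line. The constraint matrix of $P$ arises from the incidence matrix $A$ by appending unit rows and (negated) unit columns, so it is totally unimodular. Since $P$ also has integral right-hand sides, its vertices are integral.
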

\begin{proof}
Consider $u \in \mathcal{U}$ such that $u_e $ is not an integer for at least one $e$. We show there is a shift $a \not = 0$ such that $u+a \in \mathcal{U}$, as well as $u-a \in \mathcal{U}$, which means that $u$ is in interior of a line segment in $\mathcal{U}$ and cannot be a vertex. When we will speak of $a$, we assume that all entries are zeroes except entries explicitly stated.

\newcommand{\nonint}{^{u\not \in \N}}
\newcommand{\xnonint}{^{x\not \in \N}}
Define $E\nonint \coloneqq \{ e \in E \mid u_e \text{ is not integer} \} $.  

Let $x$ be a feasible flow of size $f$ in $G_u$. If there is $e \in E\nonint$ such that $x_e < u_e$, it means that $x$ is feasible for $u + a$ and $u - a$ with $a_e > 0$ sufficiently small, meaning that $u$ is not a vertex of $\mathcal{U}$. Therefore, from now on, we will deal only with the case $x_e = u_e$ for all $e \in E\nonint$. 

Define $E\xnonint \coloneqq \{ e \in E \mid x_e \text{ is not integer} \}$ and $$V\xnonint \coloneqq \{ v \in V \mid (v, v') \in E\xnonint \lor (v',v) \in E\xnonint \text{ for some $v' \in V$}\}.$$ Note that there is an (undirected) cycle in graph $G\xnonint = (V\xnonint,E\xnonint)$. This is because each node in $V\xnonint$ has at least degree $2$: no node can have degree $0$ as we require each node to be incident with at least one edge, and no node can have degree $1$ because of the flow balance constraints -- in each node, the balance is integral ($f$, $-f$ or $0$), and if an edge transports nonintegral flow from or to a node, then there has to be at least one another edge that contributes to aligning the balance of the node to an integer value. In a graph with all node-degrees greater or equal to $2$, an (undirected) cycle has to exists. Denote the set of edges forming such a cycle by $C$. 

\newcommand{\forward}{^\text{f}}
\newcommand{\backward}{^\text{b}}
\newcommand{\cycleintegral}{^{C\land u \in \N}}
Now, we will show that we can both increase and decrease the nonintegral capacities on edges along the cycle $C$ (we will construct a suitable shift $a$). Fix the orientation of the cycle $C$ arbitrarily. Denote by $C\forward$ the set of forward-going edges in $C$ (i.e. edges whose orientation in $G$ is consistent with the chosen orientation), and $C\backward$ the set of backward-going edges in $C$.  Let $\varepsilon >0$ be a sufficiently small number.  Consider $a_e = \varepsilon$ for all $e \in C\forward \cap E\nonint$ and $a_e = -\varepsilon$ for all $e \in C\backward \cap E\nonint$. The capacities of all $e \in E\cycleintegral\coloneqq C \setminus E\nonint$ remain intact. We now show that both $u-a$ and $u+a$ are feasible scenarios. For scenario $u+a$, the flow $x'$ such that 
$$
x'_e = \begin{cases}
    x_e + \varepsilon & \text{if }e \in C\forward,\\
    x_e - \varepsilon & \text{if }e \in C\backward, \\
    x_e & \text{otherwise}.
\end{cases}$$
is feasible: the node flow-balance constraints \eqref{lpFlow2}--\eqref{lpFlow4} are met, the capacity constraints \eqref{lpFlow5} are met for edges from $E \setminus C$ (the flow and capacities were not changed) and edges from $ C \cap E\nonint$ (the capacities were changed the same way as the flow). For the remaining edges, i.e., for edges from the set $E\cycleintegral$, the following property holds: for each $e \in E\cycleintegral$, we have $0 < x_e < u_e$. Hence, $x'_e = x_e \pm \varepsilon $ is feasible. 

For scenario $u-a$, the flow witnessing feasibility is flow $x'$ with 
$x'_e = x_e - \varepsilon$ for $e \in C\forward$, $x'_e = x_e + \varepsilon$ for $e \in C\backward$ and $x'_e =x_e$ for $e \in E \setminus C$. The argumentation why this flow is feasible is the same as for scenario $u+a$.

To conclude, we proved that any feasible scenario $u$ with a nonintegral capacity on an edge is in the interior of line segment determined by two other feasible scenarios, meaning such a $u$ cannot be a vertex. 
\end{proof}

The question now is whether the set $\mna{U}$ admits an efficient representation by means of a system of linear inequalities. The answer is negative. \cref{prop:U:exp} shows that in the worst case, we need an exponential number of linear constraints to describe~$\mna{U}$.

\begin{theorem}\label{prop:U:exp}
 The set $\mna{U}$ cannot be described by a polynomial number of linear constraints. 
\end{theorem}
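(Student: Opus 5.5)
We will exhibit a family of instances for which $\mna{U}$ has exponentially many facets. The natural tool is max-flow/min-cut duality. A scenario $u\in\inum{u}$ is feasible if and only if the maximum $s$--$t$ flow in $G_u$ is at least $f$. By the max-flow min-cut theorem this holds if and only if every $s$--$t$ cut $\delta^+(S)$ (with $s\in S$, $t\notin S$) satisfies $\sum_{e\in\delta^+(S)} u_e \ge f$. Hence
\[
\mna{U}=\Bigl\{u\in\inum{u} \Bigm| \textstyle\sum_{e\in\delta^+(S)} u_e\ge f \ \text{for all } s\in S\subseteq V\setminus\{t\}\Bigr\},
\]
which is an explicit, but possibly exponential, inequality description. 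The task is to choose the graph and the intervals so that exponentially many of these cut inequalities are facet-defining. Since every facet of a full-dimensional polyhedron needs its own inequality in any description, this gives the claim.

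For the construction we will take a graph with exponentially many minimal $s$--$t$ cuts. A concrete choice is the series composition of $k$ ``diamonds'': nodes $v_0=s,v_1,\dots,v_k=t$, each consecutive pair $v_{i-1},v_i$ joined by two parallel paths of length two, $v_{i-1}\to a_i\to v_i$ and $v_{i-1}\to b_i\to v_i$. If parallel arcs are allowed, the simpler choice is $k$ blocks of two parallel arcs. This gives $n=O(k)$, while the minimal cuts number $4^k$ (respectively $2^k$): choose one arc on each of the two paths in every block and cut a single block. Simplest is to take the bundle of $k$ internally disjoint $s$--$t$ paths, each of length two, $s\to w_i\to t$. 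A minimal cut then picks, for each $i$, either $(s,w_i)$ or $(w_i,t)$, so there are $2^k$ minimal cuts on $2k+1$ nodes. We set all intervals to $[0,1]$ and $f=1$ (or $f=k/2$ to make things more symmetric).

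The key step is to verify that each cut inequality $\sum_{e\in C}u_e\ge f$ for a minimal cut $C$ defines a facet of $\mna{U}$. First, $\mna{U}$ is full-dimensional, since it contains the point $u\equiv 1$ and a small ball around a slightly smaller interior point such as $u\equiv 1-\varepsilon$. Next, we will produce $m$ affinely independent points of $\mna{U}$ lying on the hyperplane of $C$. With $f=1$ in the path-bundle, take the cut choosing arc $e_i\in\{(s,w_i),(w_i,t)\}$ on path $i$, and let $e_i'$ denote the other arc. Set $u_{e_i}=1/k$ for all $i$ and $u_{e'_i}=1$; this point is feasible and tight on $C$. Perturbing one $u_{e'_i}$ down slightly keeps the point feasible and tight, and moving mass between $u_{e_i}$ and $u_{e_j}$ along the hyperplane does the same. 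Together these give $2k$ affinely independent tight points. We also need that no other cut inequality and no bound constraint implies this one. The affine-independence count settles that directly.

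The main obstacle is exactly this facet verification, in particular making sure that the interplay with the box constraints $0\le u_e\le 1$ does not make the cut inequalities redundant. For example, with $f$ too large, only few cuts would matter. Choosing $f=1$ with $k\ge 2$, all capacities strictly inside the box at the tight points, avoids that. Once facetness is established, $2^k$ distinct facets on a graph with $O(k)$ nodes and arcs rule out any polynomial-size linear description, which proves \cref{prop:U:exp}.
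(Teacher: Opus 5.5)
Your argument is correct, and it differs from the paper's in both the instance and the irredundancy argument.

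\textbf{What the paper does.} It takes the complete digraph on $n$ nodes with $f=1$ and $\inum{u}_e=[0,2]$, and notes that $\mna{U}$ is cut out by the $2^{n-2}$ cut inequalities. It then argues irredundancy indirectly. Distinct cuts of a complete graph have non-nested supports, so no nonnegative combination of the other cut inequalities and the lower bounds yields a given one. The upper bounds $2$ are chosen loose enough not to interfere, since all vertices of the cut polyhedron lie in $[0,1]^{|E|}$.

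\textbf{What you do.} You use the bundle of $k$ internally disjoint two-arc paths with $\inum{u}_e=[0,1]$; note this graph has $k+2$ nodes, not $2k+1$. You then prove directly that each cut inequality is facet-defining. Your count of $2k$ affinely independent tight points is right: the base point, the $k$ downward perturbations of the $u_{e_i'}$, and the $k-1$ exchanges along $\mathbf{1}_{e_1}-\mathbf{1}_{e_j}$. All of these are admissible since $1/k<1$ for $k\ge 2$.

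\textbf{Comparison.} Your route gives a stronger result. The instance is series--parallel, has only $2k$ arcs and $k$ $s$--$t$ paths, and still has $2^{n-2}$ facets. So the exponential lower bound holds even for the graph classes of \cref{propNpSerParGraph}, where the pseudopolynomial algorithm applies. Your facet argument also handles the box constraints explicitly. The paper's route gives a short support-based argument that needs no explicit points, but it uses a dense graph and treats the upper bounds only informally, via the location of the vertices.

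\textbf{A false side remark.} The claim that a series composition of $k$ diamonds (or of $k$ blocks of two parallel arcs) has $4^k$ (resp.\ $2^k$) minimal cuts is wrong and should be removed. Every $s$--$t$ path passes through all of $v_0,\dots,v_k$, so each minimal cut lies inside a single block. That gives only $4k$ (resp.\ $k$) minimal cuts, and every other cut inequality is implied by these together with $u\ge 0$. Series composition multiplies paths; it is the parallel composition in your path bundle that multiplies cuts.

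\textbf{A small inaccuracy.} At your base point the arcs $e_i'$ sit at the upper bound $1$, so ``all capacities strictly inside the box'' is not accurate. This is harmless, because the facet argument only needs the tight points to lie in $\mna{U}$.
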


\begin{proof}
We construct an instance of the flow problem, for which an exponential number of inequalities is needed. Let $G$ be a complete graph with $n$ nodes, that is, $V=\seznam{n}$ and $E=\{(i,j)\in V^2\mmid i\not=j\}$. We set the flow $f=1$ and the interval capacities $\inum{u}_e=[0,2]$ for each $e \in E$. Now, capacity $u\in\inum{u}$ is feasible if and only if the size of each cut is at least~$f$. More formally, 
\begin{align}\label{ineqKnCut}
1=f \leq \sum_{i\in S,\,j\not\in S} u_{ij},\quad 
 \forall S: s\in S\subseteq V\setminus\{t\}. 
\end{align}
This system consists of $2^{n-2}$ linear inequalities, and we claim that none of them is redundant. Denote the inequality corresponding to a set $S$ by $ineq(S)$. If an inequality $ineq(S^*)$ is redundant, then it is a consequence of a nonnegative linear combination of some other inequalities $ineq(S_1),\dots,ineq(S_k)$. However, since these inequalities include variables not involved in $ineq(S^*)$ and because of positive coefficients by the variables, we cannot obtain such a nonnegative linear combination.

For a similar reason, none of the inequalities in \eqref{ineqKnCut} are redundant even if we include the lower bound constraints $u\geq\unum{u}=0$. 

Now, all vertices of the convex polyhedral set 
\[ \{u\mmid u\geq\unum{u}=0,\ u\mbox{ satisfies \eqref{ineqKnCut}}\} \]
lie in the hypercube $[0,1]^{|E|}$, since for the vertices some of the inequalities in \eqref{ineqKnCut} hold as equations. As a~result, the upper bound constraints $u_{ij}\leq\onum{u}_{ij}=2$ do not affect these vertices, and therefore none of the inequalities in \eqref{ineqKnCut} are redundant. 
\end{proof}

%%%
\subsection{Optimal value function}
Using the set of feasible capacities $\mna{U}$, we can equivalently define the worst optimal value as
$$
\cw=\max\ c(u)  \ \ \mbox{subject to}\ \  u\in\mna{U}.
$$
We know from \cref{pro:U:convex:polyhedron} that $\mna{U}$ is a convex polyhedral set. Here, we show that the optimal value function $c(u)$ is a convex function, which offers an insight into why the computation of $\cw$ is hard. Convexity of the function $c(u)$ can be inferred from the theory of parametric programming, but for the sake of completeness we include the proof.

\begin{proposition}\label{pro:convexity}
The optimal value function $c(u)$ is convex on~$\mna{U}$.    
\end{proposition}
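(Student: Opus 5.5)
Our approach is the direct convex-combination argument from parametric linear programming, where the capacities enter only the right-hand side of the constraints. Fix two feasible capacity vectors $u^1,u^2\in\mna{U}$ and a parameter $\lambda\in[0,1]$, and set $u^\lambda \coloneqq \lambda u^1+(1-\lambda)u^2$. By \cref{pro:U:convex:polyhedron}, $u^\lambda\in\mna{U}$, so $c(u^\lambda)$ is a finite optimal value. Our goal is to show
\[
c(u^\lambda)\le \lambda c(u^1)+(1-\lambda)c(u^2).
\]

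First we check that $c$ is well defined and attained at each point of $\mna{U}$. For $u\in\mna{U}$, the feasible set of \eqref{lpFlow} is nonempty and bounded, since $0\le x\le u$. Hence an optimal flow exists, and the minimum is finite because $c\ge 0$. So we may choose optimal flows $x^1$ for $G_{u^1}$ and $x^2$ for $G_{u^2}$, with costs $c(u^1)=c^\top x^1$ and $c(u^2)=c^\top x^2$.

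The key step is to show that $x^\lambda\coloneqq\lambda x^1+(1-\lambda)x^2$ is feasible for $G_{u^\lambda}$. The balance constraints \eqref{lpFlow2}--\eqref{lpFlow4} are linear equalities with right-hand sides $0$, $f$ and $-f$ that do not depend on $u$, so they are preserved under convex combination. The bound constraints \eqref{lpFlow5} also combine correctly: $0\le x^\lambda_e\le \lambda u^1_e+(1-\lambda)u^2_e=u^\lambda_e$. Therefore
\[
c(u^\lambda)\le c^\top x^\lambda=\lambda c^\top x^1+(1-\lambda)c^\top x^2=\lambda c(u^1)+(1-\lambda)c(u^2),
\]
which is convexity.

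There is no real obstacle here. The only point needing care is that the domain of $c$ must be convex, which is exactly \cref{pro:U:convex:polyhedron}, and that $c$ is finite on it, which follows from feasibility together with boundedness of the feasible flows.
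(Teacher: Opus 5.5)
Your proof is correct, but it takes the primal route, whereas the paper argues through duality.

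\textbf{The paper's route.} It replaces \eqref{lpFlow} by its dual $\max\{u^Ty+d^Tz : (y,z)\in Z\}$. The feasible polyhedron $Z$ does not depend on $u$, so $c(u)$ is a pointwise maximum of functions affine in $u$. Convexity then follows from the inequality $\max(A+B)\le\max A+\max B$.

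\textbf{Your route.} You use that the set of pairs $(x,u)$ satisfying \eqref{lpFlow2}--\eqref{lpFlow5} is jointly convex and that the objective does not involve $u$. Hence a convex combination of optimal flows is a feasible, though not necessarily optimal, flow for the combined capacities, and it bounds $c(u^\lambda)$ from above.

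\textbf{What each buys.} Your argument is more elementary in two respects:
\begin{itemize}
\item It needs no strong duality. The paper tacitly uses that the primal is feasible and bounded on $\mna{U}$, so that the dual value equals $c(u)$.
\item It re-derives $u^\lambda\in\mna{U}$ on its own: $x^\lambda$ witnesses feasibility, and $u^\lambda\in\inum{u}$ by convexity of the intervals. So the appeal to \cref{pro:U:convex:polyhedron} is not actually needed.
\end{itemize}
The dual argument, in exchange, writes $c$ as a maximum of affine functions over a fixed polyhedron. This makes its piecewise-linear structure visible and connects directly to the dual variables used later in the complementary-slackness MILP of \cref{secExactComp}.

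A minor aside: finiteness and attainment of $c(u)$ already follow from the feasible set being a nonempty bounded polytope, so the sign assumption $c\ge 0$ plays no role in that step.
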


\begin{proof}
For capacity $u\in\mna{U}$, we can replace the primal linear programming problem \eqref{lpFlow} by its dual problem, which has the form 
\begin{align*}
c(u) = \max\ u^Ty+d^Tz \ \ \mbox{subject to}\ \ (y,z)\in Z
\end{align*}
for a certain vector $d$ and a convex polyhedral set~$Z$. Now, for capacities $u_1,u_2\in\mna{U}$ and their convex combination $\lambda_1u_1+\lambda_2u_2$, we have
\begin{align*}
c(\lambda_1u_1+\lambda_2u_2)
&= \max_{(y,z)\in Z}\ (\lambda_1u_1+\lambda_2u_2)^Ty+d^Tz \\
&= \max_{(y,z)\in Z}\ \lambda_1(u_1^Ty+d^Tz)+\lambda_2(u_2^Ty+d^Tz) \\
&\leq \max_{(y,z)\in Z}\ \lambda_1(u_1^T
y+d^Tz)
 +\max_{(y,z)\in Z}\lambda_2(u_2^Ty+d^Tz) \\
&=\lambda_1c(u_1)+\lambda_2c(u_2),
\end{align*}
which proves convexity of the optimal value function $c(u)$ on~$\mna{U}$.
\end{proof}

\subsection{The worst scenario and optimal flows}

Let us now examine the properties of the scenarios, for which the worst optimal value $c_w$ is attained. We start by showcasing one instance in which the worst scenario can be easily characterized.

\begin{proposition}
    \label{pro:unum:u:feasible}
If the minimum cost flow problem is feasible for capacities $u\coloneqq\unum{u}$, then $\cw$ is attained for $u\coloneqq\unum{u}$.
\end{proposition}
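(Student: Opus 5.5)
The argument is a monotonicity one: lowering capacities can only shrink the feasible region of the minimum cost flow LP, so it can only raise the optimal cost.

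First, I fix an arbitrary scenario $u\in\mna{U}$. Since $u\in\inum{u}$, we have $\unum{u}_e\le u_e$ for every arc $e\in E$. By hypothesis $G_{\unum{u}}$ admits a feasible flow of size $f$, so $\unum{u}\in\mna{U}$ and $c(\unum{u})$ is a finite number. The optimal value is finite rather than $-\infty$ because $c\ge 0$ and $x\ge 0$ make the objective bounded below; alternatively, the feasible set is bounded by the capacities.

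Next, I compare feasible regions. Any flow $x$ satisfying \eqref{lpFlow2}--\eqref{lpFlow5} with capacities $\unum{u}$ satisfies $0\le x_e\le \unum{u}_e\le u_e$ for every arc. The balance constraints do not depend on $u$, so $x$ is also feasible for $G_u$. Hence the feasible set of \eqref{lpFlow} for $\unum{u}$ is contained in the feasible set for $u$. Minimizing the same objective over a larger set gives a value that is no larger, so $c(u)\le c(\unum{u})$.

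Finally, since $u\in\mna{U}$ was arbitrary and $\unum{u}\in\mna{U}$, I get $\cw=\max_{u\in\mna{U}}c(u)=c(\unum{u})$, so the maximum is attained at $\unum{u}$.

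There is no real obstacle here. The only point needing care is to state explicitly that $\unum{u}$ belongs to $\mna{U}$, which is exactly the hypothesis, so that the maximum is attained and is not merely an upper bound. The convexity result of \cref{pro:convexity} is not needed.
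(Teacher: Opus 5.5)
Your proof is correct and follows the same monotonicity argument as the paper: the feasible set of \eqref{lpFlow} under the lower-bound capacities is contained in the feasible set under every other scenario, so the optimal value is largest there. You also state explicitly that the lower-bound scenario lies in $\mathcal{U}$ (so the maximum is attained) and that its optimal value is finite, which the paper leaves implicit.
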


\begin{proof}
For capacities $u\coloneqq\unum{u}$, the feasible set of linear program \eqref{lpFlow} is the smallest one among all scenarios, so the optimal value is maximal.
\end{proof}

A minimum cost flow problem with interval capacities can have multiple worst-case scenarios. Before we aim to find the worst scenario with a particular structure, let us discuss some useful properties of the optimal flows.
\cref{pro:minimal:capacities} shows that for any optimal flow within a worst-case scenario, there exists a worst-case scenario that has minimal capacities with respect to the flow.

\begin{lemma}\label{pro:minimal:capacities}
    Consider a scenario $u \in \mna{U}$. Let $x$ be an optimal flow in scenario $u$. Set $u' \coloneqq \max \{\unum{u}, x\}$, where the maximum is understood componentwise. Then $x$ is an optimal flow for scenario $u'$, too.
    \\
    Furthermore, if $u$ is a worst-case scenario, then $u'$ is worst-case scenario, too.
\end{lemma}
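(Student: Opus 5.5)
The plan is to compare feasible sets via the sandwich $\unum{u} \le u' \le u$, which I check first. Since $x$ is feasible in $G_u$, we have $0 \le x \le u$ componentwise. Since $u \in \inum{u}$, we also have $\unum{u} \le u \le \onum{u}$. Hence $u' = \max\{\unum{u}, x\}$ satisfies $\unum{u} \le u' \le u \le \onum{u}$, so $u' \in \inum{u}$. Moreover, $x \le u'$ by construction, so $x$ satisfies \eqref{lpFlow2}--\eqref{lpFlow5} with capacities $u'$. Thus $u' \in \mna{U}$ and $x$ is feasible in $G_{u'}$.

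For optimality, I will use monotonicity of the feasible region in the capacities. Because $u' \le u$, every flow feasible for $G_{u'}$ is also feasible for $G_u$. Therefore
\[
c(u') \ge c(u) = c^T x .
\]
On the other hand, $x$ is feasible for $G_{u'}$, so $c(u') \le c^T x$. Combining the two gives $c(u') = c^T x$, i.e. $x$ is optimal for scenario $u'$. This is the same reasoning as in the proof of \cref{pro:unum:u:feasible}, applied to the pair $u' \le u$ instead of $\unum{u} \le u$.

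For the second part, suppose $u$ is a worst-case scenario, so $c(u) = \cw$. By the first part, $u' \in \mna{U}$ and $c(u') = c(u) = \cw$. By definition of $\cw$ as the maximum over $\mna{U}$, this makes $u'$ a worst-case scenario as well.

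I do not expect a genuine obstacle. The only point requiring care is verifying that $u' \in \inum{u}$, in particular the upper bound. This holds because $x \le u \le \onum{u}$ and $\unum{u} \le \onum{u}$, so the componentwise maximum cannot exceed $\onum{u}$.
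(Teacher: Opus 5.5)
Your proposal is correct and follows essentially the same argument as the paper: $u' \le u$ gives the feasible-set inclusion and hence $c(u') \ge c(u)$, and feasibility of $x$ under $u'$ gives equality and optimality of $x$. Your explicit check that $\unum{u} \le u' \le u \le \onum{u}$, so that $u' \in \inum{u}$ and $u' \in \mna{U}$, fills in a detail the paper leaves implicit.
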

\begin{proof}
The feasible set of linear program \eqref{lpFlow} for scenario $u$ is a superset of the feasible set for the scenario~$u'$ (by definition of $u'$), hence $c(u) \le c(u')$ holds. Furthermore, $x$ is also a feasible solution of \eqref{lpFlow} for the scenario $u'$. Hence, $x$ witnesses $c(u) = c(u')$ and $x$ is optimal for $u'$.

Moreover, if $u$ is a worst-case scenario with $c(u) = \cw$, then also $c(u') = \cw$ holds since $c(u) = c(u')$.
\end{proof}

We can also observe that any optimal solution of a given scenario can be transformed into a solution, in which there is positive flow on arcs between any two nodes in at most one direction. Such a solution is called \emph{complementary}. \cref{prop:optSol:complem} formalizes and proves the result.

\begin{lemma}\label{prop:optSol:complem}
    For a scenario $u \in \mna{U}$, there exists an optimal solution of the associated minimum cost flow problem such that for each pair of arcs $(i,j), (j,i) \in E$ we have $x_{ij} \cdot x_{ji} = 0$. 
\end{lemma}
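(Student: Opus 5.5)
Take an arbitrary optimal solution $x$ of \eqref{lpFlow} for scenario $u$ and cancel flow on every pair of opposite arcs. For each pair $(i,j),(j,i)\in E$ with $x_{ij}>0$ and $x_{ji}>0$, set $\delta\coloneqq\min\{x_{ij},x_{ji}\}$. Define $x'$ by $x'_{ij}=x_{ij}-\delta$, $x'_{ji}=x_{ji}-\delta$, and $x'_e=x_e$ on all other arcs.

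We first check that $x'$ is feasible for scenario $u$.
\begin{itemize}
\item At node $i$, outflow and inflow both drop by $\delta$, so the balance term in \eqref{lpFlow2}--\eqref{lpFlow4} is unchanged. The same holds at node $j$, and no other node is affected.
\item The capacity constraints \eqref{lpFlow5} still hold, because the new values lie in $[0,x_{ij}]\subseteq[0,u_{ij}]$ and $[0,x_{ji}]\subseteq[0,u_{ji}]$.
\item After the step, at least one of $x'_{ij},x'_{ji}$ equals $0$.
\end{itemize}

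Next, the cost changes by $-\delta(c_{ij}+c_{ji})\le 0$, since the costs are nonnegative. So $x'$ is again optimal, and in fact the cost change is $0$, because a strictly smaller cost would contradict the optimality of $x$.

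Repeat this operation for each pair of opposite arcs. Each step involves only the two arcs of its own pair, so it never makes a previously cancelled pair positive in both directions again. After at most $m/2$ steps we obtain an optimal solution with $x_{ij}\cdot x_{ji}=0$ for all such pairs, which proves the claim.

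The only subtle point is the sign of the cost change, which relies on the assumption $c\colon E\to\R^+$. If costs could be negative, a 2-cycle with $c_{ij}+c_{ji}<0$ would break the argument. Here nonnegativity settles it directly: optimality forces the cost change to be zero whenever both arcs carry flow.
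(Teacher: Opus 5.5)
Your proof is correct and takes essentially the same approach as the paper. Both subtract $\min\{x_{ij},x_{ji}\}$ from the two opposite arcs, use nonnegative costs together with the optimality of $x$ to conclude the cost is unchanged (the paper phrases this as forcing $c_{ij}=c_{ji}=0$), and repeat over all pairs; you just spell out feasibility and termination more explicitly.
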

\begin{proof}
    Consider an optimal solution $x$ with positive flows on both $(i,j)$ and $(j,i)$ for some nodes $i, j \in V$. \MR{Construct a feasible solution $x'$ by subtracting flow of size $\varepsilon = \min\{ x_{ij}, x_{ji}\} > 0$ on both arcs $(i,j)$ and $(j,i)$, i.e. $x'_{ij} = x_{ij}-\varepsilon$ and $x'_{ji} = x_{ji} - \varepsilon$. 
    
    If $c_{ij} > 0$ or $c_{ji} > 0$, subtracting flow from both arcs strictly decreases the objective value and we obtain $c(x') < c(x)$, which contradicts optimality of the solution $x$.}
    \MR{Thus, $c_{ij} = c_{ji} = 0$ holds, since the arc costs are assumed to be nonnegative. As a consequence, we obtain $c(x') = c(x)$ and therefore $x'$ is an optimal solution of the problem, while also satisfying $x'_{ij} \cdot x'_{ji} = 0$. 
    
    This operation can be repeated for all pairs of nodes with a positive flow in both directions, yielding an optimal solution with the desired property.}
\end{proof}

\cref{pro:integer:capacities:means:integer:solution} states the integrality theorem that holds for the classical minimum cost flow problem. 

\begin{theorem}[{\citet[Theorem 9.10]{ahuja_network_1993}}]\label{pro:integer:capacities:means:integer:solution}
Given an integral scenario $u \in \mathcal{U}$ and integral requested flow $f$, $u$ admits an optimal flow $x$ such that $x$ is integral.
\end{theorem}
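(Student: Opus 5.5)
The plan is to use the fact that the constraint matrix of \eqref{lpFlow} is totally unimodular, and to conclude with the fundamental theorem of linear programming. Fix the integral scenario $u\in\mathcal{U}$ and the integral flow value $f$. The equality constraints \eqref{lpFlow2}--\eqref{lpFlow4} form the node--arc incidence matrix $N$ of $G$. Each column of $N$ has exactly one $+1$ (at the tail of the arc) and one $-1$ (at its head), so $N$ is totally unimodular by the standard Poincar\'e/Heller--Tompkins criterion. Appending the identity rows for the bounds $0\le x\le u$ preserves total unimodularity. Hence the full constraint matrix $\begin{pmatrix} N\\ I\end{pmatrix}$ is totally unimodular, and the right-hand side $(0,\dots,0,f,-f,u)$ is integral.

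First, the feasible polytope $P_u=\{x : Nx=b,\ 0\le x\le u\}$ is nonempty, because $u\in\mathcal{U}$. It is also bounded, since $0\le x\le u$, so $P_u$ is a polytope. Consequently the linear objective $c^Tx$ attains its minimum at a vertex of $P_u$.

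Next, every vertex $x$ of $P_u$ is the unique solution of a nonsingular square subsystem of the tight constraints. Since that subsystem's matrix is a square submatrix of a totally unimodular matrix, its determinant is $\pm1$. Cramer's rule together with the integral right-hand side then shows that $x$ is integral. Therefore an optimal vertex is an integral optimal flow.

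The only point needing care is the total unimodularity of the incidence matrix. To make that step self-contained, I would argue it by induction on the size $k$ of a square submatrix $B$. If some column of $B$ is zero, then $\det B=0$. If some column has a single nonzero entry, expand the determinant along that column and apply the induction hypothesis. Otherwise every column of $B$ contains both a $+1$ and a $-1$, so the rows of $B$ sum to zero and $\det B=0$. This handles $N$. Adding unit rows is covered by the same kind of Laplace expansion along those rows.
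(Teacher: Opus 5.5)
Your proof is correct, but it is not the route the paper relies on. The paper does not prove the statement; it only cites \citet[Theorem 9.10]{ahuja_network_1993}. There the integrality property is derived constructively from the cycle-canceling algorithm: one starts from an integral feasible flow given by a maximum-flow computation, then repeatedly pushes the minimum residual capacity around a negative cycle of the residual network. Residual capacities stay integral, so every iterate is integral, and the final flow is optimal by the negative-cycle criterion.

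Your total-unimodularity argument is static instead. It shows that every vertex of $P_u$ is integral, not merely that some optimal flow is, and it never uses the cost vector. So it covers the paper's real-valued costs verbatim. With real costs, the algorithmic proof must instead argue termination from strict cost decrease over the finitely many integral flows bounded by $u$. Your polyhedral argument also matches the vertex-based reasoning of \cref{pro:integrality:of:vertices:of:U} and \cref{pro:integer:limits:means:integer:capacities:in:worst:scenario}.

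The cited route, in turn, is purely combinatorial and gives an actual procedure for finding the integral optimum. It is also built on the residual-graph optimality criterion that the paper reuses in the proof of \cref{pro:forest}.

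One step you leave implicit is harmless: the tight constraints $x_e=0$ and $x_e=u_e$ correspond to the same row of $I$. Hence a nonsingular tight subsystem really is an $m\times m$ submatrix of $\begin{pmatrix} N\\ I\end{pmatrix}$, as your Cramer's-rule step requires.
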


We now show that an analogous integrality theorem can be proven also for the worst scenario of the minimum cost flow problem with interval capacities.

\begin{theorem}\label{pro:integer:limits:means:integer:capacities:in:worst:scenario}
If capacities $\unum{u}$ and $\onum{u}$ and requested flow $f$ are integral, then there exists an integral scenario~$u$ with $c(u)= c_w$.
\end{theorem}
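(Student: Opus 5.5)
The natural route combines the convexity of $c(u)$ (\cref{pro:convexity}), the polyhedrality of $\mna{U}$ (\cref{pro:U:convex:polyhedron}) and the integrality of its vertices (\cref{pro:integrality:of:vertices:of:U}). The idea is that a convex function attains its maximum over a polytope at a vertex, and every vertex of $\mna{U}$ is integral.

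First I will check that $\mna{U}$ is a polytope, i.e. bounded and with vertices. This is immediate: $\mna{U}\subseteq\inum{u}$, and $\inum{u}$ is a box with finite bounds $\unum{u},\onum{u}$. By \cref{pro:U:convex:polyhedron}, $\mna{U}$ is then a bounded convex polyhedron, and we assume it is nonempty. Hence $\mna{U}$ is the convex hull of its finitely many vertices $v^1,\dots,v^k$.

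Second, I will show that the maximum $\cw$ is attained. The function $c(u)$ is finite on $\mna{U}$, because every scenario in $\mna{U}$ is feasible and costs are nonnegative, so the LP is bounded. Rather than arguing continuity, I would use convexity directly. Take any $u\in\mna{U}$ and write $u=\sum_i\lambda_i v^i$ as a convex combination of vertices. Extending the two-point inequality of \cref{pro:convexity} to finitely many points (by induction, or directly from the same dual argument) gives
\[
c(u)\le\sum_i\lambda_i c(v^i)\le\max_i c(v^i).
\]
So $\sup_{u\in\mna{U}}c(u)\le\max_i c(v^i)$, and this maximum is attained at some vertex $v^{i^*}\in\mna{U}$. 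Therefore $\cw=c(v^{i^*})$.

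Finally, \cref{pro:integrality:of:vertices:of:U} says that $v^{i^*}$ has integral coordinates, because $\unum{u}$, $\onum{u}$ and $f$ are integral. Thus $u\coloneqq v^{i^*}$ is an integral scenario with $c(u)=\cw$.

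The main technical points are only routine. One is that the dual in \cref{pro:convexity} is feasible and bounded for each $u\in\mna{U}$, which follows from strong duality since the primal is feasible and bounded. The other is the finite-point extension of the convexity inequality. I do not expect a real obstacle, since the heavy lifting was already done in \cref{pro:integrality:of:vertices:of:U}.
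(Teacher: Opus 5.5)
Your proposal is correct and follows the same route as the paper: convexity of $c(u)$ on $\mna{U}$, together with the integrality of the vertices of the polyhedron $\mna{U}$, yields an integral maximizer. The paper states this in one line; your extra checks are exactly the details it leaves implicit, namely boundedness of $\mna{U}$ via $\mna{U}\subseteq\inum{u}$, the finite-point Jensen inequality over a vertex representation, and hence attainment of $\cw$ at a vertex.
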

\begin{proof}
Recall that $c_w = \max c(u) \text{ s.t. }u \in \mathcal{U}$. Since $c(u)$ is a convex function according to \cref{pro:convexity} and $\mathcal{U}$ is a convex polyhedron (\cref{pro:U:convex:polyhedron}) with integral vertices (\cref{pro:integrality:of:vertices:of:U}), there exists an integral maximizer. 
\end{proof}

%%%%%%%%%%%%%%%%%%%%%%%%%%%%%%%%%%%%%%%%%%%%%%%%%%%%%%%%%%%%%%% 

\section{Computational complexity}\label{sec:complexity}
Let us now formally explore the computational complexity of the problem of computing the worst optimal value $c_w$. Note that our problem can be formulated as a worst case finite optimal value of a linear program with interval right-hand side \citep{Hla2018d}.
In general, this problem was proved to be NP-hard, however, due to the special structure of our flow problem, we have to inspect its computational complexity separately.

\begin{theorem}
    \label{pro:np:hard}
The problem of computing $\cw$ is strongly NP-hard.
\end{theorem}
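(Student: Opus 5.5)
I plan to prove \cref{pro:np:hard} by a polynomial reduction from a strongly NP-hard problem whose numbers are bounded by a polynomial in the input size. The candidates are \textsc{Independent Set}, \textsc{3-SAT}, or \textsc{3-Partition}. Since all capacity bounds, costs and the flow $f$ I intend to use are $0$, $1$ or small polynomially bounded integers, NP-hardness of this reduction is automatically strong.

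The guiding intuition is the adversarial reading of $\cw$: the adversary lowers capacities of cheap arcs to push flow onto expensive ones, but must keep $u\in\mna{U}$. Feasibility is a cut condition, as used in the proof of \cref{prop:U:exp}, so it is exactly the combinatorial constraint that should encode the hard problem. By \cref{pro:integer:limits:means:integer:capacities:in:worst:scenario} I may restrict attention to integral scenarios. With $0/1$ intervals, a scenario is then a choice of a set of \quo{blocked} arcs, which turns the continuous max-min problem into a discrete selection problem.

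\textbf{Construction (first attempt, from \textsc{Independent Set}).} Given a graph $H=(V_H,E_H)$ and a threshold $k$, set $f=|V_H|$. For each vertex $v$ add a node $a_v$ with arc $(s,a_v)$ of fixed capacity $[1,1]$ and cost $0$. From $a_v$ there are two routes to $t$:
\begin{itemize}
\item a cheap arc $(a_v,t)$ with cost $0$ and capacity $[0,1]$, whose lower bound $0$ lets the adversary block it;
\item an expensive route $a_v\to b_v$ with cost $1$, continuing through shared \quo{edge nodes} $w_e$ for the edges $e$ incident to $v$, each with exit arc $(w_e,t)$ of capacity $1$.
\end{itemize}
The goal is that a set $B$ of blocked vertices yields a feasible scenario iff $B$ is independent in $H$, so that the worst cost equals $\alpha(H)$. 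Two blocked adjacent vertices should compete for a bottleneck they cannot both pass. The optimal cost of a feasible scenario equals $|B|$, since cheap arcs are used whenever open.

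\textbf{Proof steps, in order.}
\begin{enumerate}
\item Show that every non-independent $B$ makes some $s$--$t$ cut smaller than $f$.
\item Show that every independent $B$ admits a flow, checked via the cut condition or an explicit routing.
\item Show that the minimum cost in scenario $B$ is exactly $|B|$, by arguing that no flow can use a cheap arc that is closed and that all others cost $0$.
\item Conclude that $\cw\ge k$ iff $\alpha(H)\ge k$.
\end{enumerate}

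\textbf{Main obstacle.} The feasibility test in step 1 must capture exactly pairwise conflicts. A naive shared-bottleneck gadget instead produces a Hall-type condition (blocked vertices matchable into incident edges), which is not independence. To fix this, I would first try giving each vertex $v$ a private bottleneck per incident edge, with the edge gadget for $e=uv$ carrying one unit that both $u$ and $v$ need simultaneously when blocked. Equivalently, I could require that a blocked vertex sends flow through all of its edge gadgets by splitting its unit as $1/\deg(v)$, scaled to integers. If that proves too delicate, my fallback is a reduction from \textsc{3-SAT}: variable gadgets are pairs of parallel cheap arcs of which the adversary may block exactly one (enforced by a fixed-capacity cut), and clause gadgets become infeasible exactly when all three literal arcs are blocked. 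The costs then count satisfied clauses, and I would verify the same four steps for that gadget.
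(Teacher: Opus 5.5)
\textbf{There is a genuine gap: the proposal contains no working reduction, and the obstacle you ran into cannot be fixed by better gadgets.}

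Step~1 fails for the construction you actually give, as you note yourself. Both repairs and the \textsc{3-SAT} fallback are unverified sketches. The underlying problem is the design principle. In your construction every arc $(s,a_v)$ must carry exactly one unit. In any feasible flow you can shift flow from the route through $b_v$ onto an unsaturated cheap arc $(a_v,t)$ without violating a capacity. Hence, for \emph{every} $u\in\mathcal{U}$, fractional or not, $c(u)=\sum_v(1-u_{a_vt})$ is affine in $u$. Consequently
\[
c_w=\max\Big\{\textstyle\sum_v(1-u_{a_vt}) \;:\; \lb{u}\le u\le\ub{u},\ 0\le x\le u,\ x \text{ satisfies } \eqref{lpFlow2}\text{--}\eqref{lpFlow4}\Big\}
\]
is a single polynomial-size linear program: the lifted $(x,u)$ description from the proof of \cref{pro:U:convex:polyhedron}. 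By \cref{pro:integrality:of:vertices:of:U}, its optimum is attained at a $0/1$ scenario. So the quantity ``largest feasible blocked set'', which you want to equal $\alpha(H)$, is computable in polynomial time whatever feasibility gadget you attach. The Hall-type condition you hit is only the visible symptom.

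The same applies to your variants:
\begin{itemize}
\item In the private-bottleneck and $1/\deg(v)$-splitting versions, interior capacities on the cheap arcs allow partial blocking, and the adversary's problem becomes a max-flow.
\item The \textsc{3-SAT} fallback has the same defect as long as the cost is additive over blocked cheap arcs. There, literal arcs shared by several clause gadgets would also reintroduce the capacity-sharing problem.
\end{itemize}

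\textbf{Missing idea.} The hardness has to come from the inner minimization, i.e.\ from the genuine nonlinearity of the convex function $c(u)$, not from the cut structure of $\mathcal{U}$. The paper does this with no gadgets, by reducing from the longest (Hamiltonian) $s$--$t$ path problem. Take $f=1$, $c_e=1$ and $\inum{u}_e=[0,1]$ on all arcs.
\begin{itemize}
\item Set $u_e=1$ exactly on the arcs of a simple $s$--$t$ path $P$ and $u_e=0$ elsewhere. This forces the unit of flow onto $P$, so $c_w\ge|P|$.
\item Conversely, by \cref{pro:integer:limits:means:integer:capacities:in:worst:scenario,pro:integer:capacities:means:integer:solution}, some worst scenario has an integral optimal flow. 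Since all costs are positive, this flow is a simple $s$--$t$ path. Hence $c_w$ equals the length of a longest $s$--$t$ path.
\end{itemize}
All data are $0/1$, which gives strong NP-hardness. Your adversarial intuition is right, but the adversary should not block ``cheap exits'' at a fixed cost each. It should leave exactly one long path open, so that the follower's shortest-path choice is what makes maximizing $c(u)$ hard.
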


\begin{proof}
    We use a reduction from the NP-hard problem of determining the longest path in an unweighted graph.

    To reduce the longest path to the problem of computing $\cw$, we consider all $\binom{n}{2}$ pairs $(s,t)$ of possible starting and ending nodes of the path. For each pair, we solve a flow problem instance on the graph with the required flow $f=1$, costs $c_{ij} = 1$ and interval capacities $\inum{u}_{ij}=[0,1]$ for all arcs. Then $\cw$ is attained for the longest path from $s$ to~$t$, and every integer optimal solution for $\cw$ yields the longest path from $s$ to $t$. 

    Hence, solving $O(n^2)$ problems of finding $\cw$ suffices to solve the NP-hard problem of determining the longest path in an unweighted graph.
\end{proof}

Since the above proof utilized a reduction from the NP-hard problem of finding the longest path (or a Hamiltonian path) on directed graphs, the problem of computing $\cw$ remains NP-hard even when restricted to those subclasses of graphs for which the longest (or Hamiltonian) path is hard to compute. This includes, e.g., directed planar graphs with indegree and outdegree at most two~\citep{Ple1979}.

\cref{propNpSerParGraph} presents another NP-hardness reduction from the knapsack problem. The reduction allows for deriving the hardness result even for the special class of series-parallel graphs (graphs with treewidth $2$). 
A series-parallel graph is a graph that can be constructed by a sequence of the following operations: 
\begin{enumerate}
    \item \emph{arc creation}: a new disjoint component of the graph is created, composed of a new source $s$, a new sink $t$ and the arc $(s,t)$,
    \item \emph{serial composition}: two disjoint components, one with source $s_1$ and sink $t_1$, one with source $s_2$ and sink $t_2$, are composed into one by merging $t_1$ with $s_2$, such that the resulting component has a source $s=s_1$ and a sink $t=t_2$,
    \item \emph{parallel composition}: two disjoint components, one with source $s_1$ and sink $t_1$, one with source $s_2$ and sink $t_2$, are composed into one by merging $s_1$ with $s_2$ and $t_1$ with $t_2$, such that the resulting component has a source $s = s_1=s_2$ and a sink $t=t_1=t_2$.
\end{enumerate}
Note that even though series-parallel graphs can have exponential numbers of paths from $s$ to $t$, the proposed reduction creates an instance of minimum cost flow, where only a linear number of paths is present. 

\begin{theorem}\label{propNpSerParGraph}
    The problem of computing $\cw$ is NP-hard for the class of series-parallel graphs and for graphs with $O(n)$ paths from $s$ to $t$. 
\end{theorem}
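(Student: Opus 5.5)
The plan is a polynomial reduction from the (weakly NP-hard) decision version of \textsc{Knapsack}: given items $1,\dots,k$ with integer weights $w_i$ and profits $p_i$, a capacity $W$ and a target $P$, decide whether some subset $I$ satisfies $\sum_{i\in I} w_i\le W$ and $\sum_{i\in I}p_i\ge P$. Since \textsc{Knapsack} is only weakly NP-hard, we only get plain NP-hardness. This is consistent with the pseudopolynomial algorithm for series-parallel graphs announced in the introduction.

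\emph{Construction.} We build a network of $k$ item gadgets composed in parallel between $s$ and $t$, plus one bypass branch.
\begin{itemize}
\item Each item gadget is a short series-parallel piece: a few arcs composed in series and in parallel, so that it contributes $O(1)$ $s$--$t$ paths.
\item The bypass branch is a single arc $(s,t)$ with a large fixed capacity. Its capacity together with $f$ encodes $W$ through the requirement that a flow of size $f$ remains feasible.
\end{itemize}
The whole graph is then series-parallel by construction and has $O(k)=O(n)$ paths from $s$ to $t$, which gives both claims of \cref{propNpSerParGraph} at once. In gadget $i$, the adversary either keeps capacities at their upper bounds, so that the gadget absorbs flow cheaply, or lowers some of them, which forces $w_i$ units of flow onto a more expensive route. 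The costs should be chosen so that cutting gadget $i$ raises the optimal cost by $p_i$, while consuming $w_i$ units of the feasibility slack $\sum_e \onum{u}_e - f$ across the parallel cut.

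\emph{Correctness.} The intended equivalence is $\cw \ge C_0+P$ if and only if the knapsack instance is a yes-instance, where $C_0$ is the cost of the unperturbed scenario $\onum{u}$.
\begin{itemize}
\item For the direction ``knapsack yes $\Rightarrow$ large $\cw$'', we exhibit the scenario that cuts exactly the gadgets in $I$, and we check feasibility and the cost via the cut condition on the parallel composition.
\item For the converse direction, we invoke \cref{pro:integer:limits:means:integer:capacities:in:worst:scenario} together with \cref{pro:minimal:capacities}. These let us assume that a worst scenario is integral and tight with respect to an optimal integral flow (\cref{pro:integer:capacities:means:integer:solution}). We then read off a subset $I$ from the gadgets that are (fully) cut.
\end{itemize}

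\emph{Main obstacle: all-or-nothing behaviour.} If each gadget were a single arc with capacity in $[0,w_i]$, the adversary could shrink capacities fractionally. The problem would then degenerate to the fractional knapsack, which greedy solves, and integrality of vertices of $\mna{U}$ (\cref{pro:integrality:of:vertices:of:U}) does not prevent partial cuts by integer amounts. The gadget must therefore make partial reductions of gadget $i$ strictly unprofitable per unit of slack compared with a full cut. I would try to obtain this with a fixed-charge effect.
\begin{itemize}
\item Put a cheap arc in series with a fixed-capacity arc $[w_i,w_i]$.
\item Add a parallel detour whose cost becomes relevant only once the whole $w_i$ is rerouted.
\item Alternatively, exploit nonzero lower bounds $\lb{u}_e$, so that the flow through a gadget is forced to be either $0$ or $w_i$ along the expensive route.
\end{itemize}
I would first verify the gadget on a single item by computing $c(u)$ as a function of the gadget capacities and checking that its maximum over the slack budget is attained only at the two extreme choices.
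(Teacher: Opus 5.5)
\textbf{There is a genuine gap: the item gadget, which is the whole content of the proof, is never constructed.}

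Your high-level plan matches the paper: reduce from \textsc{Knapsack}, place one gadget per item in parallel between $s$ and $t$, and in the converse direction pass to an integral worst scenario with an integral optimal flow via \cref{pro:integer:capacities:means:integer:solution,pro:integer:limits:means:integer:capacities:in:worst:scenario}. But without a concrete gadget, neither direction of your equivalence can be checked.

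You correctly see that a single arc $[0,w_i]$ only gives fractional knapsack. However, two of your three candidate fixes cannot work:
\begin{itemize}
\item An arc in series with a fixed arc $[w_i,w_i]$ just takes the smaller capacity and adds per-unit costs. On its own it keeps the gadget's cost linear in its throughput, so no fixed charge appears.
\item Nonzero lower bounds $\underline{u}_e$ restrict the \emph{capacity}, not the flow; $0\le x_e\le u_e$ still holds. For a fixed scenario the feasible flows form a convex set, so nothing can force a gadget's throughput to be ``$0$ or $w_i$''.
\end{itemize}
Your claim that cutting gadget $i$ raises the cost by exactly $p_i$, independently of the other cuts, is also unsupported. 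In general the gain from a cut is either proportional to the amount cut (fractional knapsack again) or depends on which cheaper alternatives the other cuts have left.

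\textbf{The missing ingredient is the paper's gadget, a precise version of your ``parallel detour''.} For item $i$, add a node $i$, an arc $(s,i)$ with capacity $[0,a_i]$ and cost $0$, and two parallel arcs $(i,t)$: one with capacity $a_i-1$ and cost $0$, one with capacity $1$ and cost $c'_i$. Set $f=\min\{b,\sum_i a_i\}$. Because the minimizer fills the free arc first, gadget $i$ costs nothing for throughput up to $a_i-1$ and costs $c'_i$ exactly when all $a_i$ units enter node $i$.

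The orientation is ``open to gain'' rather than your ``cut to gain'':
\begin{itemize}
\item The adversary opens $(s,i)$ fully for the chosen items. It must make $\sum_i u_{si}=b$ exactly, since surplus capacity lets the minimizer avoid the expensive arcs.
\item The remainder $r=b-\sum_{i\in I}a_i$ goes on an unselected item with $a_j>r$, which exists for an inclusion-maximal optimal $I$; there it travels for free.
\item Conversely, in an integral worst scenario with an integral optimal flow, an expensive arc with $c'_i>0$ is used only if the free arc beside it is saturated. So the items whose expensive arcs carry flow have total weight at most $f=b$, which gives $c_w=c_K$.
\end{itemize}

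Partial openings are not ``strictly unprofitable'' in the continuous sense you ask for. With $u_{si}=a_i-1+\theta$ and saturation forced, the adversary extracts $\theta c'_i$. The all-or-nothing behaviour holds only at integral scenarios, and \cref{pro:integer:limits:means:integer:capacities:in:worst:scenario} is what lets you restrict to them.

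Your cut-to-gain architecture can be rescued the same way, but only with these specific choices:
\begin{itemize}
\item a unit arc of cost $M-p_i$ beside an arc of capacity $w_i-1$ and cost $M$, where $M=\max_i p_i$;
\item a bypass of cost $M$ with capacity at least the number of items;
\item your slack encoding of $W$.
\end{itemize}
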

\begin{proof}
    We use a reduction from binary knapsack problem with integer data: Given a knapsack with capacity $b\in \N$ and a set of items $\{1, \dots, n'\}$ with weights $a \in \N^{n'}$ and values $c' \in \N^{n'}$, compute 
    \[ c_K \coloneqq \max {c'}^T y \text{ subject to } a^T y \le b,\ y \in \{0,1\}^{n'}.\]

    We construct an instance of the minimum cost flow problem with interval capacities on the series-parallel graph illustrated in \cref{fig:complex:SP} with a source node $s$, sink node $t$ and one node and three arcs per item in the knapsack problem. The numbers above arcs indicate the capacities, the numbers below arcs are the costs. The requested flow is set to $f = \min \{b, \sum_{i=1}^{n'} a_i\}$.
    
    \begin{figure}
    \centering
    \begin{tikzpicture}[->]
    \node[ordnode] (s) {$s$};
    \node[ordnode] (1) at (3, 2) {$1$};
    \node[ordnode] (2) at (3, -2) {$n'$};
    \node[ordnode] (t) at (6,0) {$t$};
    \node[rotate=90] at (1,0) {$\cdots$};
    \node[rotate=90] at (3,1) {$\cdots$};
    \node[rotate=90] at (3,-1) {$\cdots$};
    \node[rotate=90] at (3,0) {$\cdots$};

        \draw[] (s)--
                node[pos=0.5,sloped,yshift=0.25cm] {$[0,a_1]$} 
                node[pos=0.5,sloped,yshift=-0.25cm] {$0$}
            (1);
        \draw[] (2) to[bend left]
                node[pos=0.5,sloped,yshift=0.25cm] {$1$} 
                node[pos=0.5,sloped,yshift=-0.25cm] {$c'_{n'}$}
            (t);
            \draw[] (2) to[bend right]
                node[pos=0.5,sloped,yshift=0.25cm] {$a_{n'} -1$} 
                node[pos=0.5,sloped,yshift=-0.25cm] {$0$}
            (t);
        \draw[] (1) to[bend left]
                node[pos=0.5,sloped,yshift=0.25cm] {$1$} 
                node[pos=0.5,sloped,yshift=-0.25cm] {$c'_1$}
            (t);
        \draw[] (1) to[bend right]
                node[pos=0.5,sloped,yshift=0.25cm] {$a_1-1$} 
                node[pos=0.5,sloped,yshift=-0.25cm] {$0$}
            (t);
        \draw[] (s)--
                node[pos=0.5,sloped,yshift=0.25cm] {$[0,a_{n'}]$} 
                node[pos=0.5,sloped,yshift=-0.25cm] {$0$}
            (2);
\end{tikzpicture}
\caption{Reduction from the knapsack problem to the minimum cost flow with interval capacities. Numbers above the arcs are the capacities, numbers below the arcs are the costs.}\label{fig:complex:SP}
\end{figure}
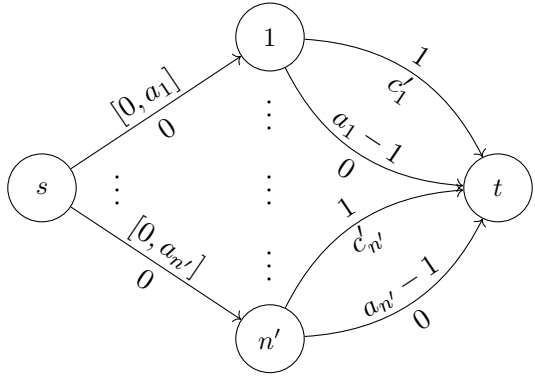 

Note that the constructed instance is a multigraph. This is without loss of generality, since parallel arcs can be subdivided by auxiliary nodes.
Note also that there are $2n' = O(n)$ paths from $s$ to $t$ and that the graph is series-parallel.

We claim that $c_K = \cw$ holds for the constructed instance. 

If $\sum_{i=1}^{n'} a_i \le b$, all items can be put into the knapsack and the knapsack problem is easy. In such a case, there is only one feasible scenario that allows to transfer flow $f$: we set the interval capacities to $u_{si} = a_i$ for every $i \in \{1, \dots, n'\}$. We have $\cw = \sum_{i=1}^{n'} c'_i = c_K$. 

In the rest of proof, we assume $\sum_{i=1}^{n'} a_i > b$, so we have $f=b$.

To prove $c_K \le \cw$, we consider an optimal solution $y^*$ of the knapsack problem with optimal value $c_K = {c'}^T y^*$. The total weight of items in this optimal solution is $w = a^T y^*$. We build a scenario $u$  with $c(u) = c_K$ as follows: we set $u_{si} = a_i$ for each $i$ such that $y^*_i = 1$. Since $y^*$ is an optimal solution of the knapsack problem, no item can be added to knapsack in this solution and we have $a_j > r \coloneqq b - w$ for each $j$ such that $y^*_j = 0$. We select whatever one of them, say $j'$ and set $u_{sj'} = r$. For all other $j$, we set $u_{sj} = 0$. Under this scenario, the minimum cost flow transports:
\begin{itemize}
\item $r$ units of flow through node $j'$, and since we have $r \le a_{j'} -1$, the costs are $0$,
\item $a_i$ units of flow through node $i$ for each $i$ such that $y^*_i = 1$, one of units is for the costs $c'_i$, other for zero costs.
\end{itemize}
Hence, the costs in this scenario is at least $c_K$ and we have $\cw \ge c_K$.

It remains to show that $c_K \ge \cw$. From \cref{pro:integer:capacities:means:integer:solution,pro:integer:limits:means:integer:capacities:in:worst:scenario}, we know that there is a scenario $u^*$ with $c(u^*) = \cw$ with optimal flow $x^*$, such that $x^*_e \in \N$ for all $e \in E$. 
We build solution $y'$ of the knapsack problem satisfying $c'^T y' = \cw$ and show that it is feasible. We simply set $y'_i = 1$ for every arc $(i,t)$ with nonzero costs that is used by flow $x^*$. Such a constructed $y'$ is feasible in the knapsack problem: $y'_i = 1$ iff the mentioned arc $(i,t)$ transports flow of size $1$, which occurs iff flow of size $a_i$ is transported through node $i$. Since $f = b$, we have $a^T y' \le b$. We have that $y'$ is a feasible solution of knapsack, so $c_K \ge c'^T y' = \cw$ and we are finished.
\end{proof}

\paragraph{Knapsack with rational data}
Note that the knapsack problem used in the proof of \cref{propNpSerParGraph} has integer coefficients. This allows to prove weak NP-hardness. However, with a more careful construction of the instance, we could also reduce the knapsack problem with rational coefficients, which is claimed to be strongly NP-hard \citep{wojtcak:2018:knapsackstronglynphard}.

%%%%%%%%%%%%%%%%%%%%%%%%%%%%%%%%%%%%%%%%%%%%%%%%%%%%%%%%%%%%%%%

\section{Exact computation of $\cw$}\label{secExactComp}

As we have proved in \cref{sec:complexity}, the problem of computing the worst optimal value of the minimum cost flow problem with interval capacities is NP-hard. Therefore, there is little hope to design an efficient polynomial-time method for finding the exact value $c_w$. 

In this section, we present a mixed-integer linear programming formulation for computing $c_w$ based on the theory of duality. The formulation allows us to utilize the available integer programming solvers and heuristics to tackle this challenging problem.

Furthermore, we also design a pseudopolynomial algorithm for computing $c_w$ on the subclass of series-parallel graphs, which has a time complexity polynomial in the number of nodes, arcs and the size of the flow. Recall that for this subclass, the problem was also proved to be NP-hard in \cref{propNpSerParGraph}.

\subsection{Mixed-integer linear programming formulation}
 Let us now derive a mixed-integer linear programming model for computing the worst optimal value~$c_w$ based on complementary slackness. An analogous approach was previously used for computing $c_w$ for the class of interval transportation problems~\citep{Garajova:IntervalTransportationProblem:2023}. Alternatively, we can also employ the general method by~\cite{Hla2018d} for computing the worst optimal value of an interval linear program based on a decomposition into basis stable regions.

Here, we first use duality and complementary slackness to model the problem as a linear program with additional indicator constraints (which are also supported by some integer programming solvers).

Let $\mathcal{S}$ denote the union of all network flows $x$ that are optimal for some setting of the capacities $u \in \mathcal{U}$. Then, the problem of computing the worst finite optimal value $\cw$ can be equivalently restated as finding the maximal value of the objective function over $\mathcal{S}$, i.e.
\[
\max \sum_{(i,j) \in E} c_{ij} x_{ij} \text{ subject to } x \in \mathcal{S}.
\]
To find a description of the optimal set $\mathcal{S}$, we can use duality in linear programming: the optimal solutions can be characterized by primal and dual feasibility and complementary slackness conditions (see also \citet[Chapter 9.4]{ahuja_network_1993} for details). This leads to the following formulation:
\begin{alignat*}{3}
    &\max \rlap{$\displaystyle\sum_{(i,j) \in E} c_{ij} x_{ij}$} \\
    &\text{ subject to } 
    &\text{constraints } &\eqref{lpFlow2}-\eqref{lpFlow5}, \\
    &&\pi_i - \pi_j - \alpha_{ij} &\le c_{ij}, &\forall (i,j) \in E,\\
    &&\alpha_{ij} &\ge 0, &\forall (i,j) \in E,\\
    &&c^\pi_{ij} &= c_{ij} - \pi_i + \pi_j, &\forall (i,j) \in E,\\
    &&\unum{u}_{ij} &\le u_{ij} \le \onum{u}_{ij}, &\forall (i,j) \in E,\\
    &&&\begin{cases}
    x_{ij} = 0 & \text{if } c^\pi_{ij} > 0,\\
    c^\pi_{ij} = 0 & \text{if } 0 < x_{ij} < u_{ij},\\
    x_{ij} = u_{ij} & \text{if } c^\pi_{ij} < 0.
    \end{cases}
\end{alignat*}
In this formulation, complementary slackness is implemented using the indicator constraints, which force the equations $x_{ij} = 0$ or $x_{ij} = u_{ij}$ depending on the sign of the variables $c^{\pi}_{ij}$. Note that the indicator constraint for $c^\pi_{ij} = 0$ can be omitted, since it is a consequence of the remaining two constraints: if $c^\pi_{ij} \neq 0$, then $x_{ij} \in \{0, u_{ij}\}$, therefore $x_{ij} \notin \{0, u_{ij}\}$ implies $c^\pi_{ij} = 0$.

Finally, we can reformulate the model as a mixed-integer program by restating the indicator constraints as big-M constraints. To this end, we introduce auxiliary binary variables $y_{ij}, z_{ij} \in \{0,1\}$ and replace each indicator constraint as follows:
\begin{align}
    c^\pi_{ij} > 0 &\implies x_{ij} = 0 &&  c^\pi_{ij} \le M y_{ij} \text{ and } x_{ij} \le M (1-y_{ij}) \\
    c^\pi_{ij} < 0 &\implies x_{ij} = u_{ij} && c^\pi_{ij} \ge -M z_{ij} \text{ and } u_{ij} - x_{ij} \le M(1-z_{ij})
\end{align}

A sufficiently large value of the constant $M$ can be defined as
\[ 
    M = \max \left\{ \max_{(i,j) \in E} \ub{u}_{ij}; \ \ 2\cdot\!\!\!\sum_{(i,j) \in E}\!\! c_{ij} + \max_{(i,j) \in E} c_{ij}    \right\}. 
\]
This setting allows the variables $c^\pi_{ij}$, $x_{ij}$, and the difference $u_{ij}-x_{ij}$ to attain a sufficiently large value. The value of the variable $x_{ij}$ and the expression $u_{ij}-x_{ij}$ can be bounded from above by the upper bound $\ub{u}_{ij}$ of the capacity on the corresponding arc.
The variable $c^\pi_{ij}$ is the sum of $-\pi_i$, $\pi_j$ and the cost of the arc $c_{ij}$, where the variables $\pi_i$ and $\pi_j$ are the shadow prices of the flow-balance constraint at nodes $i$ and $j$, respectively. Therefore, these variables represent the change in the objective value with respect to change in the right-hand side of the constraint and as such, their value can be bounded by the maximal cost of an (undirected) path or a cycle in the graph. Clearly, this cost can be bounded by the sum of the costs of all edges. Note that it is also possible to use separate values of $M$ for different variables to obtain tighter bounds.

\subsection{Pseudopolynomial algorithm for series-parallel graphs}
Let us now consider a special subclass of minimum cost network flow problems with interval capacities, in which the underlying network is a series-parallel graph. The task of computing the worst optimal value $c_w$ was also proved to be NP-hard on this class in \cref{propNpSerParGraph}. In this section, we propose an algorithm that is polynomial in the number of nodes $n$, the number of arcs $m$ and the size of flow~$f$, for the class of series-parallel graphs with integer values of capacities $\lb{u}, \ub{u}: E \rightarrow \N$ and a positive requested flow $f \in \N$.

Recall that a series-parallel graph is constructed using three types of operations: arc creation, parallel composition and serial composition (see \cref{sec:complexity}). During the construction, some components are being created and merged.
Observe that every arc creation operation creates $1$~new component: an arc with $2$ new nodes, while both types of composition operations merge $2$~existing components into $1$ more complex component, preserving the number of arcs and reducing the number of nodes by $1$ (for serial composition) or by $2$ (for parallel composition). Hence, every series-parallel graph with $m$ arcs has at most $2m$ nodes and can be constructed by at most $2m-1 = \mathcal{O}(m)$ operations.

\newcommand{\ur}{\unum{r}}
\newcommand{\hr}{\onum{r}}
\paragraph{Idea of the algorithm} 
Given an instance of the minimum cost flow problem with interval capacities $\inum{u}$ on a series-parallel graph $G$, we consider the finite sequence of operations $o_1, \ldots, o_q$ that constructs $G$. For a flow $x$ and capacities $u \in \inum{u}$, we say that a (directed) path $P$ in the graph is \emph{saturated} if $x_e = u_e$ holds for at least one $e \in P$.

For every component created during the construction, we will compute and store the following data:
\begin{itemize}
    \item the \emph{costs mapping} $d \colon \{0,1, \ldots, f\} \to \N \cup \{-\infty\}$ that stores the worst optimal costs at which a given flow can be sent through the component; if an amount of flow $f'$ is infeasible for the component, we assign $d(f') = -\infty$,
    \item the \emph{minimal restrictable flow} $\ur \in \N$, which is the minimal amount of flow that has to be sent through the component such that there exist some capacities $u \in \inum{u}$ for which all paths from the source to the sink are saturated, or, equivalently, the maximal amount of flow which can be transported through the component under all scenarios $u \in \inum{u}$, and, %transporting flow of size $\ur$ through the component is \MR{strongly feasible}, and,
    \item the \emph{maximal transportable flow} $\hr$ through the component over scenarios $u \in \inum{u}$; this information can actually be also computed from the costs mapping in time $O(f)$ as  $$\hr = \max\{f' \in \{0, 1, \dots, f\} \mmid d(f') \ge 0\}.$$
\end{itemize}  
    
After processing the entire construction sequence, the algorithm returns the maximal costs $d(f)$ attainable for the flow of size $f$ on the graph $G$, which correspond to the worst optimal value $c_w$. Note that thanks to the properties proved in \cref{pro:integer:capacities:means:integer:solution,pro:integer:limits:means:integer:capacities:in:worst:scenario}, it is sufficient to consider only integer flows during the computation.

To design the pseudopolynomial algorithm, we complete the following steps for each of the $\mathcal{O}(m)$ operations required to construct $G$:
\begin{itemize}
    \item Propose a method to compute the costs mapping $d$, the minimal restrictable flow $\ur$ (and possibly also the maximal flow $\hr$) for the components resulting from the three types of series-parallel graph operations.
    \item Prove that the computation for each type of operation depends only on $f$ and this dependence is polynomial.  
\end{itemize}
Below, we address these steps for each of the three types of operations.

\paragraph{Arc creation} During arc creation, a component with a single arc $e$ with a capacity $[\unum{u}_e, \onum{u}_e]$ and cost $c_e$ is created. Clearly, for this component the minimal restrictable flow $\ur$ can be set to $\lb{u}_e$, since this is the lowest capacity for which the only path (arc) from source to sink is saturated. Moreover, a flow of size $f' \le \ub{u}_e$ can be transported through the component, and for each $f' \in \{0, 1, \dots, \ub{u}_e\}$, the (maximal) costs are calculated simply as $d(f') = c_e \cdot f'$. Therefore, we have
$$
\ur = \unum{u}_e, \qquad \hr = \onum{u}_e, \qquad
d(f') = \begin{cases}
c_e \cdot f' & \text{if }f' \in \{ 0, 1, \ldots, \onum{u}_e \},\\
-\infty & \text{otherwise}.
\end{cases}
$$
The time-complexity of constructing the mapping is $\mathcal{O}(f)$.

\paragraph{Serial composition} 
Let $C_1$ and $C_2$ be the two original components used in the serial composition, with the minimal restrictable flows $\ur_1$, $\ur_2$, the maximal flows $\hr_1, \hr_2$ and costs mappings $d_1$, $d_2$, respectively, and let $C$ denote the newly composed component with the minimal restrictable flow $\ur$, the maximal flow $\hr$ and costs mapping $d$. In this operation, one of the sinks is merged with one of the sources, creating an articulation in the graph $C$, and all paths in $C$ are serial compositions of paths in the former components $C_1$, $C_2$. To send a flow of size $f'$ through the newly composed component, both $C_1$ and $C_2$ have to be able to transport $f'$. Therefore, the minimal restrictable flow (and also the maximal flow) depends on the more restrictive component and the maximal costs for transporting the flow are obtained as sum of the maximal costs within $C_1$ and $C_2$, i.e.
\[
\ur = \min\{\ur_1, \ur_2\}, \qquad \hr = \min\{\hr_1, \hr_2\}, \qquad
d(f') = d_1(f') + d_2(f') \quad \text{for }f' \in \{ 0,1, \ldots, f \}.
\]
The time complexity of constructing the mapping is $\mathcal{O}(f)$.

\paragraph{Parallel composition}
Let $C_1$ and $C_2$ be the two components used in the parallel composition, with {the} minimal restrictable flows $\ur_1$, $\ur_2$, {the} maximal flows $\hr_1, \hr_2$ and costs mappings $d_1$, $d_2$, respectively, and let $C$ denote the newly composed component with {the} minimal restrictable flow $\ur$, {the} maximal flow $\hr$ and costs mapping $d$. In this operation, the sources of $C_1$ and $C_2$ are merged to create source for $C$ and the sinks are merged to create the sink.

Note that all paths from $C_1$ and $C_2$ are available in $C$ independently, because no overlaps between the two former components are created in the composition  (other than the sources and sinks). Thus, the minimal restrictable flow and the maximal flow for $C$ can be computed as $$\ur = \ur_1 + \ur_2, \qquad \hr = \hr_1 + \hr_2. $$

The newly created component $C$ can generally offer more ways to transport a flow of a given amount. For a flow of size $f'$ to be transported in $C$, we distinguish three cases: 
\begin{enumerate}[label=\alph*)]
    \item Assume $\hr_1 + \hr_2 < f'$. In this case, the combined capacity of $C_1$ and $C_2$ is not sufficient to transport a flow of size $f'$ in $C$ and we can set $d(f') = -\infty$.
    
    \item \label{enu:pseudopoly:alg:unrestrictable} Assume $\ur_1 + \ur_2 > f'$. In this case, $f'$ can be transported through $C$, however, it cannot be transported as a sum of two restrictable flows. Therefore, any resulting flow of size $f'$ is not restrictable in $C$ and the requested flow $f'$ is feasible for all possible scenarios, meaning that we can compute $d(f')$ as the minimal cost flow for capacities $\unum{u}$ by any standard algorithm. There is a more efficient way, however: $d(f')$ can be expressed using $d_1, d_2, \ur_1$ and $\ur_2$ previously computed, namely 
    \begin{equation}\label{eq:parallel:b}
    d(f') = \min \left\{ d_1(f_1) + d_2( f' - f_1)\ \mid\ {f_1 \in [ f' - \ur_2, \ur_1] \cap [0,f']} \right\}.
    \end{equation}
    Let us justify this expression. Since the two composed parts of $C$ are independent, the flow $f'$ can be transported as sum of flows of size $f_1$ through $C_1$ and $f_2$ through $C_2$, and as long as $f_1 \le \ur_1$ and $f_2 \le \ur_2$, the flow $f'$ will be transported using the cheapest composition $f_1 + f_2$. 
    Furthermore, it holds that $f_1 \in [0,f']$. The sums of flows using $f_1 > \ur_1$ or $f_2 > \ur_2$ would require increasing capacity on at least one arc, and hence they cannot result in worse costs than the expression \eqref{eq:parallel:b}, see also \cref{pro:unum:u:feasible}.

    \item \label{enu:pseudopoly:alg:restrictable} Assume $\ur_1 + \ur_2 \le f'$  and $\hr_1 + \hr_2 \ge f'$. This is the most interesting case: in general, both restrictable and unrestrictable amounts of flows can contribute to $f'$. We actually show that it suffices to consider only sums of two restrictable flows.
    
    We claim that \begin{equation}\label{eq:pseudopoly:df:restrictable:case}d(f') = \max \left\{d_1(f_1) + d_2(f' - f_1)\ \mid\ {f_1 \in [\max\{\ur_1, f' - \hr_2\} , \min\{f'-\ur_2, \hr_1\}]}\right\}.\end{equation}

    Clearly, the maximum in \eqref{eq:pseudopoly:df:restrictable:case} is sought over such decompositions of $f'$ to $f_1, f_2$ such that $f_1 \in [\ur_1,\hr_1]$ and $f_1 = f'-f_2 \in [f' - \hr_2, f' - \ur_2] $ implying $f_2 \in [\ur_2, \hr_2]$. So, the computed $d(f')$ can be  obtained by two restrictable flows and the value $d(f')$ can be enforced by some scenario. 

    \newcommand{\con}{^{\text{con}}}
    
    It remains to show that there is no scenario with worse total costs. For contradiction, assume there is a scenario $u\con$ transporting $f'$ with costs $c\con > d(f')$. The flow can be decomposed to flows $f\con_1$ and $f\con_2$ through $C_1$ and $C_2$. Either $f\con_1 < \ur_1$, or $f\con_2 < \ur_2$, since all other cases are examined by \eqref{eq:pseudopoly:df:restrictable:case}. Without loss of generality assume that $f\con_1 < \ur_1$. In scenario $u\con$,  flow of size $\ur_1$ can be transported through $C_1$ with costs $c_1 = d_1(\ur_1)$ (as we can assume all capacities in $C_1$ are set to the their lower limits) and flow $f'-\ur_1$ can be transported through $C_2$ with costs $c_2 \le d_2(f'-\ur_1)$ (flow of this size is feasible as $f' -\ur_1 < f' - f\con_1 = f\con_2$). But flow with costs $c_1 + c_2$ was not chosen as optimal solution of min cost flow with capacities $u\con$, which means that $c_1 + c_2 \ge c\con$. Furthermore, we have $d(f') \ge d_1(\ur_1) + d_2(f'-\ur_1)$, since if $f\con_1 < \ur_1$, then $\ur_1 > f' - f\con_2 \ge f' - \hr_2$ and $f_1 = \ur_1$ is one of the values examined in maximization in \eqref{eq:pseudopoly:df:restrictable:case}.  Putting it together, we finally obtain the contradiction: $$c_1 + c_2 \ge c\con > d(f') \ge d_1(\ur_1) + d_2(f'-\ur_1) \ge c_1 + c_2.$$

\end{enumerate}
In total, cases b) and c) seek for minimum or maximum over $O(f)$ values of flow $f_1$ through $C_1$. This is repeated for $O(f)$ values of flow $f'$. In total, parallel composition can be performed in time $O(f^2)$. 

We conclude the description of the algorithm with the easy proposition about its complexity.
\begin{proposition}\label{pro:complexity:pseudpolynomial:algo}
The algorithm described above computes $\cw$ in time $O(m f^2)$, where $m$ is the number of arc-creation operations.  
\end{proposition}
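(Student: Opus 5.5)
The plan is to prove correctness and the running time separately; the time bound is mostly bookkeeping over the operation sequence. For correctness I would use structural induction on the construction sequence $o_1,\dots,o_q$. The invariant is that, for every component $C$ ever created, the stored $d(f')$ equals the worst optimal cost of sending $f'$ units through $C$ (that is, the maximum over scenarios on the arcs of $C$ that are feasible for $f'$ of the minimum cost), or $-\infty$ if no scenario is feasible. Likewise, $\ur$ and $\hr$ must equal the minimal restrictable flow and the maximal transportable flow of $C$. By \cref{pro:integer:capacities:means:integer:solution,pro:integer:limits:means:integer:capacities:in:worst:scenario}, applied to the subinstance on $C$ with requested flow $f'$, it suffices to consider integer $f'\in\{0,\dots,f\}$ and integer flows. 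Hence the finite tables indexed by $\{0,\dots,f\}$ carry all the needed information.

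The base case is arc creation, which holds by direct inspection. For the induction step I would invoke the case analyses already given above. For serial composition, every unit of flow passes through both parts, and the scenarios on $C_1$ and $C_2$ can be chosen independently, so the worst costs add. For parallel composition, cases a), b) and c) are exactly the justifications given in the text. In particular, case c) uses the exchange argument that a decomposition with $f_1<\ur_1$ is dominated by the decomposition $(\ur_1,f'-\ur_1)$. Once the invariant holds for the final component $G$, we get $d(f)=\cw$ by the definition of $\cw$, because the final component is the whole graph with source $s$ and sink $t$.

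The step I expect to need the most care is checking that the three parallel cases cover all $f'$ and that the index ranges in \eqref{eq:parallel:b} and \eqref{eq:pseudopoly:df:restrictable:case} stay inside $\{0,\dots,f\}$. When one of these ranges is empty, the value should be set to $-\infty$. I would also have to confirm that $\ur$ and $\hr$ never exceed $f$ in a way that breaks the indexing. To handle this, I would cap every quantity at $f$ (that is, replace $\hr$ by $\min\{\hr,f\}$ and do the same for $\ur$); this does not change $d$ on $\{0,\dots,f\}$.

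For the running time, arc creation and serial composition each fill a table of size $f+1$ in $O(f)$ time. Parallel composition evaluates, for each of the $O(f)$ values $f'$, a minimum or maximum over $O(f)$ candidate values of $f_1$, which takes $O(f^2)$ time. As computed before, a series-parallel graph is built with at most $2m-1=O(m)$ operations, where $m$ is the number of arc creations. The total time is therefore $O(m)\cdot O(f^2)=O(mf^2)$. The decomposition sequence is assumed to be given, or it can be computed in linear time by standard recognition algorithms, which does not affect the bound.
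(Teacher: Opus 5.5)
Your proposal is correct and follows the same route as the paper. The paper treats this proposition as a direct consequence of the per-operation case analysis (you only make the structural induction explicit) together with a count of at most $2m-1$ operations, each costing $O(f)$, or $O(f^2)$ for a parallel composition. Capping $\underline{r},\overline{r}$ at $f$ and the $-\infty$ fallback for empty ranges are harmless but unnecessary: in cases b) and c) the ranges are never empty (for c) this follows from $\underline{r}_1+\underline{r}_2\le f'\le\overline{r}_1+\overline{r}_2$), and all table indices already lie in $\{0,\dots,f'\}$.
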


%%%%%%%%%%%%%%%%%%%%%%%%%%%%%%%%%%%%%%%%%%%%%%%%%%%%%%%%%%%%%%% 

\section{The most extremal worst scenario}\label{sec:worst:scen}

In \cref{pro:unum:u:feasible} we proved that if the lower bound capacities are feasible, then they correspond to the worst case scenario. However, in many instances the feasibility assumption may not be satisfied and the minimal capacities may be too low to transport the required flow. 
Therefore, we now explore the characterization of the worst scenario with the most extremal setting of the interval capacities. 

We say that a capacity setting $u_{ij}\in\inum{u}_{ij}$ is \emph{interior} if $u_{ij}\in\inter\inum{u}_{ij}$, that is, $u_{ij}\not\in\{\unum{u}_{ij},\onum{u}_{ij}\}$. \cref{pro:forest,pro:upper:bound:n-1} address the question what is the minimum number of interior values that can be obtained for a scenario corresponding to~$\cw$.

\begin{theorem}\label{pro:forest}
    The worst case value $\cw$ is attained for a scenario $u \in \mna{U}$ such that the arcs with interior capacities form a forest.
\end{theorem}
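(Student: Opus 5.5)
Start from a worst-case scenario and apply \cref{pro:minimal:capacities}. Take any worst-case scenario $u$ with an optimal flow $x$, and replace it by $u' \coloneqq \max\{\unum{u}, x\}$. By the lemma, $u'$ is again a worst-case scenario and $x$ stays optimal for it. In $u'$, every arc with an interior capacity satisfies $\unum{u}_e < u'_e = x_e < \onum{u}_e$, so each such arc is saturated by $x$. Among all worst-case scenarios of this form, together with an optimal flow $x$ satisfying $u = \max\{\unum{u},x\}$, choose one minimizing the number of interior arcs. The claim is that the interior arcs of this choice contain no undirected cycle.

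Suppose for contradiction that the interior arcs contain an undirected cycle $C$. Orient $C$ arbitrarily and split it into forward arcs $C^{\mathrm f}$ and backward arcs $C^{\mathrm b}$. Define a direction $a$ by $a_e = 1$ on $C^{\mathrm f}$, $a_e = -1$ on $C^{\mathrm b}$, and $a_e = 0$ elsewhere. Shifting both the flow and the capacities, $x^\pm \coloneqq x \pm \varepsilon a$ and $u^\pm \coloneqq u \pm \varepsilon a$, preserves flow conservation and keeps every capacity constraint tight on $C$, exactly as in the proof of \cref{pro:integrality:of:vertices:of:U}. Since every arc of $C$ is interior, $u^\pm$ remains in $\inum{u}$ and $x^\pm \ge 0$ for all $\varepsilon$ in some interval around $0$. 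Hence $u^\pm \in \mna{U}$, and the scenarios $u + t a$ lie in $\mna{U}$ for all $t$ in an interval $[-t_1, t_2]$ with $t_1, t_2 > 0$. The endpoints $t_1, t_2$ are determined by the first arc of $C$ whose capacity reaches $\unum{u}_e$ or $\onum{u}_e$; the flow cannot hit zero earlier, because $x_e = u_e \ge \unum{u}_e$ there.

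The restriction $\varphi(t) \coloneqq c(u + t a)$ is convex on $[-t_1, t_2]$ by \cref{pro:convexity}, and it attains its maximum $\cw$ at the interior point $t = 0$. A convex function attaining its maximum at an interior point is constant, so $\varphi \equiv \cw$, and in particular $c(u + t_2 a) = \cw$. The scenario $u + t_2 a$ has at least one fewer interior arc, namely the arc of $C$ that reached a bound. It creates no new interior arcs, since only arcs of $C$ changed and they were interior already.

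To stay within the chosen class, apply \cref{pro:minimal:capacities} again to $u + t_2 a$, using an optimal flow $x'$ of that scenario, to obtain $\max\{\unum{u}, x'\}$. Note that $x + t_2 a$ is feasible for $u + t_2 a$ and has cost $\varphi(0) + t_2 c^T a$. The main obstacle is to make sure this re-minimization does not create new interior arcs, or else to avoid it. I would try first to sidestep the issue by choosing the potential to be the number of interior arcs of $u$ alone, using the class only to guarantee saturation. After moving to $u + t_2 a$, if $x + t_2 a$ is optimal there, which holds when $c^T a \le 0$ (swap the roles of $t_1$ and $t_2$ to arrange this), then $\max\{\unum{u}, x + t_2 a\}$ coincides with $u + t_2 a$ on $C$ and with $u$ elsewhere. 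This gives a member of the class with strictly fewer interior arcs, contradicting minimality. It remains to check that one direction always has $c^T a \le 0$: if $c^T a > 0$, use $-a$ and move to $-t_1$. The cost of the shifted flow is then $\cw - t_1 c^T a < \cw \le c(u - t_1 a)$, so that flow is not optimal. In that case instead use that $\varphi$ is constant with $\varphi(t) = \cw \le$ cost of $x + t a$, and choose the direction in which this inequality forces optimality of $x + t a$; the sign bookkeeping here is the step I expect to need care.
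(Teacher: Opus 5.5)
Your argument is correct in substance, and at the key step it takes a different route from the paper's. However, your last paragraph misreads an inequality, and that needs fixing. In the case $c^Ta>0$ you find that $x-t_1a$, which is feasible for $u-t_1a$, costs $\cw-t_1c^Ta<\cw=\varphi(-t_1)=c(u-t_1a)$. You conclude that this flow is ``not optimal''. That is the wrong reading: $c(u-t_1a)\le c^T(x-t_1a)$ because $c(\cdot)$ is a minimum, so the inequality is a contradiction. The case $c^Ta>0$ is therefore impossible. Symmetrically, at $t=t_2$ the case $c^Ta<0$ is impossible.

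Hence $c^Ta=0$, i.e.\ the cycle has zero cost, exactly as in the paper. It follows that $x+ta$ costs $\cw=\varphi(t)$ and is optimal for $u+ta$ for every $t\in[-t_1,t_2]$. So $\max\{\unum{u},x+t_2a\}=u+t_2a$ lies in your class with strictly fewer interior arcs. No sign bookkeeping remains. Your earlier parenthetical (reverse the orientation so that $c^Ta\le 0$; then the feasible flow $x+t_2a$ costs at most $\varphi(t_2)$ and is optimal) was already sufficient, and the final paragraph only muddles it.

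Compared with the paper: the paper runs an explicit cycle-canceling procedure on the interior arcs. It pushes $\delta$ along the cycle, sets $u_e=x_e$ there, and certifies optimality of the shifted flow with the residual-graph criterion. Because the cycle arcs stay saturated, the residual graph only loses arcs, so no negative cycle can appear. The zero cost of the cycle is then deduced from the worst-case property. You replace the residual-graph criterion by the convexity of $c(\cdot)$ from \cref{pro:convexity}. Convexity along the segment $u+ta$, with the maximum attained at the interior point $t=0$, gives $\varphi\equiv\cw$. Feasibility of $x+ta$ for both signs of $t$ then forces $c^Ta=0$ and optimality.

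Your version stays within the paper's own results and obtains the zero-cost property without first knowing that the shifted flow is optimal. The minimal-counterexample framing also replaces the termination argument. The paper's residual-graph argument gives a stronger local fact: the shifted flow stays optimal whether or not $u$ is worst-case. It also yields an explicit procedure that turns any worst-case scenario into one whose interior arcs form a forest.
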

\begin{proof}
Consider a worst case scenario $u \in \mna{U}$. We will show that we can modify the scenario $u$ such that its arcs with interior values have the desired property.
Firstly, we can without loss of generality consider the worst case scenario $u$ that is minimal with respect to its complementary optimal solution $x$ (see \cref{pro:minimal:capacities} and \cref{prop:optSol:complem}).

The following procedure iteratively updates the scenario $u$ and the flow $x$ in order to reduce the number of arcs with interior capacities:
\newcommand{\inte}{\text{int}}
\begin{enumerate}
    \item Let $G^{\inte}_u$ be the subgraph of $G_u = (V, E)$ containing only arcs with interior capacities, i.e.
        \[ G^{\inte}_u = (V, E^{\inte} \coloneqq \{(i,j) \in E\ \mmid\ (i,j)\text{ has interior capacity $u_{ij}$}\}). \]
    \item Find any cycle (regardless of the orientation of arcs) in the graph $G^{\inte}_u$. If none exists, end the procedure -- the arcs with interior values form a forest. Otherwise, let the cycle consist of a sequence of nodes $i_1,i_2,\dots,i_{K}=i_1 \in V$.
    \item Let $\mna{K}\coloneqq\{1,\dots,K-1\}$. Compute the desirable change of the flow $\delta$ on the cycle: 
    \begin{multline*}
\delta\coloneqq\min\big\{
 \min\{\onum{u}_{i_k,i_{k+1}}-{x}_{i_k,i_{k+1}}\mmid (i_k,i_{k+1})\in E^{\inte},\, k\in\mna{K}\},\\
 \min\{x_{i_{k+1},i_{k}} - \unum{u}_{i_{k+1},i_k}\mmid (i_{k+1},i_k)\in E^{\inte},\, k\in\mna{K}\} \big\}.
\end{multline*}
\item For each $k\in\mna{K}$, we set:
\begin{align*}
    x_{i_k,i_{k+1}}&=x_{i_k,i_{k+1}}+\delta, \quad\qquad \text{if } (i_k,i_{k+1})\in E^{\inte},\\
    x_{i_{k+1},i_k}&=x_{i_{k+1},i_k}-\delta, \quad\qquad \text{if } (i_{k+1},i_{k})\in E^{\inte}.
\end{align*}
\item For each arc $e$ on the cycle found in Step 2, set $u_e = x_e$. 
\item Go to Step 1.
\end{enumerate}

It is clear from Step 2 that once this procedure terminates, it yields a new scenario $u^*$ such that the arcs with interior capacities form no cycles -- hence its graph is a forest. 
Since there is at least one capacity set to a non-interior value in each iteration (by definition of $\delta$) and only capacities with interior values are modified, the procedure terminates after a finite number of iterations. To finish the proof, we show that the new scenario is also a valid worst case scenario for the given problem.

First, we can see from the definition of $\delta$ in Step 3 and from the modification of $x$ in Step 4 that the new capacities defined in Step 5 are valid with respect to the given upper and lower bounds. Therefore, we have $u^* \in \inum{u}$.

Further, we claim that if the optimal value of the original scenario $u \in \mna{U}$ is the worst optimal value~$c(u) = c_w$, then the optimal value of the modified scenario $u^*$ is also $c_w$. Namely, we show that the complementary optimal solution $x$ of the worst case scenario $u$ (with the cost $c_w$) is modified into a flow~$x^*$, which is a complementary optimal solution of the scenario $u^*$ with the same cost.

\MR{Consider a cycle $C$ and fix the orientation of the cycle. The cost of the oriented cycle is the sum of the costs of its arcs, where a forward arc (i.e. an arc of $G$ consistent with the chosen orientation) contributes $c_e$ to the total cost, while a backward arc (inconsistent with the orientation) contributes $-c_e$.} Note that the cycles found in Step 2 are of zero cost. Otherwise, \MR{we could choose an orientation of the cycle (in Step 2) such that} applying the modifications in Steps~4 and 5 would lead to a scenario $u'$ with a higher optimal cost $c(u') > c(u)$, which contradicts the assumption that the original scenario $u$ is a worst case scenario.

Denote by $x'$ and $u'$ the updated solution and capacities obtained in Steps 4 and 5, respectively. In each iteration, the solution $x'$ is a feasible flow with respect to $u'$:
\begin{enumerate}[label=\alph*)]
    \item After changing the flow on the cycle in Step 4, the solution $x'$ still forms a valid flow (neglecting the capacities), since the flow conservation constraints remain satisfied.
    \item After updating the capacities in Step 5, the flow $x'$ from Step 4 is a feasible solution for the modified scenario $u'$, since all capacity (and flow) constraints are respected.
\end{enumerate}

Finally, after Steps 4 and 5, the flow $x'$ is an optimal solution for the scenario $u'$. This can be justified by the characterization of optimality of a minimum cost flow using the residual graph, which only depends on those arcs in the original graph where the current flow can be increased (with $x_e < u_e$) or decreased (with $x_e > 0$). It is known that a given flow is optimal if and only if a residual graph contains no negative cycles (see \citet[Thm 9.1]{ahuja_network_1993} for details). Here, after changing the flow and capacities on the cycle, no new arcs emerge in the residual graph. Hence, if the original flow $x$ was an optimal solution for $u$ and no negative cycles were present in the residual graph, no new negative cycle can arise after the update, so the new flow $x'$ is optimal for the new scenario $u'$.
Moreover, the flow $x'$ is a complementary optimal flow for the scenario $u'$, which is minimal with respect to $x'$.
\end{proof}

\begin{corollary}\label{pro:upper:bound:n-1}
The worst case value $\cw$ is attained at a scenario $u\in\mna{U}$ such that at most $n-1$ capacity values of $u$ are interior.
\end{corollary}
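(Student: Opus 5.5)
The plan is to read the corollary off \cref{pro:forest} by counting arcs. By \cref{pro:forest} there is a worst-case scenario $u \in \mna{U}$ with $c(u) = \cw$ whose arcs with interior capacities, $E^{\mathrm{int}}$, contain no undirected cycle. The subgraph $(V, E^{\mathrm{int}})$ is therefore a forest on the $n$ nodes of $G$. Every capacity value of $u$ that is interior belongs to exactly one arc of $E^{\mathrm{int}}$, so it is enough to bound $|E^{\mathrm{int}}|$.

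The counting step is the standard fact that a forest on $n$ vertices with $k \ge 1$ connected components has exactly $n-k$ edges. Hence $|E^{\mathrm{int}}| \le n-1$, and $u$ has at most $n-1$ interior capacity values.

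The one point to check is that ``cycle'' in \cref{pro:forest} is meant without regard to orientation, including cycles of length two made by a pair $(i,j),(j,i)$. If $G$ contains such antiparallel pairs, or parallel arcs as in the multigraph of \cref{propNpSerParGraph}, the bound needs both arcs of a pair never to be interior at the same time. Step 2 of the procedure in the proof of \cref{pro:forest} finds any cycle in $G^{\mathrm{int}}_u$ irrespective of orientation, and a two-arc cycle qualifies. So when the procedure terminates, $E^{\mathrm{int}}$ is acyclic even as a multigraph, and the edge count $n-k$ still applies.

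There is no real obstacle beyond confirming this interpretation. Once $E^{\mathrm{int}}$ is an acyclic edge set on $n$ vertices, the bound $n-1$ is immediate.
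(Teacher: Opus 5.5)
Your proposal is correct and is essentially the paper's own proof: invoke \cref{pro:forest} and use the fact that a forest on $n$ nodes has at most $n-1$ arcs. Your extra check on antiparallel pairs is reasonable care that the paper leaves implicit. It is also settled by the minimal complementary normalization used in the proof of \cref{pro:forest}: there every interior arc has $u_e = x_e > \underline{u}_e \ge 0$, so $(i,j)$ and $(j,i)$ cannot both be interior.
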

\begin{proof}
    Follows directly from \cref{pro:forest}, since a forest on $n$ nodes has at most $n-1$ arcs.
\end{proof}

Furthermore, we can prove that the bound on the number of interior capacities obtained in \cref{pro:upper:bound:n-1} is tight. \cref{exa:lower:bound:n-1} illustrates a network with $n$ nodes, for which there are $n-1$ interior values in each worst case scenario. The result is formally proved in \cref{pro:lower:bound:n-1}.

\begin{example}\label{exa:lower:bound:n-1}
    Consider the graph with $n$ nodes $V = \{1, \dots, n\}$ and $2n-2$ arcs depicted in \cref{fig:n-1:edges:interior}. The numbers above the arcs correspond to the capacities, the numbers below the arcs are the costs.
    Namely, the set of arcs $e \in E$ is defined as follows:
    \begin{align*}
        (i, i+1) &\in E,  & c_{e} &= 2, &    u_{e} &= n-i,        && \text{for } i \in \{1, \dots, n-4\}, \\
        (i, i+1) &\in E,  & c_{e} &= 2, &    u_{e} &= n-i-1,      && \text{for } i \in \{n-3,n-2\}\\
        (n-1, n) &\in E,  & c_{e} &= 2, &      u_{e} &= 2,\\
        (i, n) &\in E,    & c_{e} &= 1, &        \inum{u}_{e} &= [0, 2], && \text{for } i \in \{1, \dots, n-2\}, \\
        (n-3,n-1) &\in E, & c_{e} &= 1, &    \inum{u}_{e} &= [0, 2].
    \end{align*}
     The aim is to transport flow of size $f=n$ from the source node $s=1$ to the sink $t=n$. 

        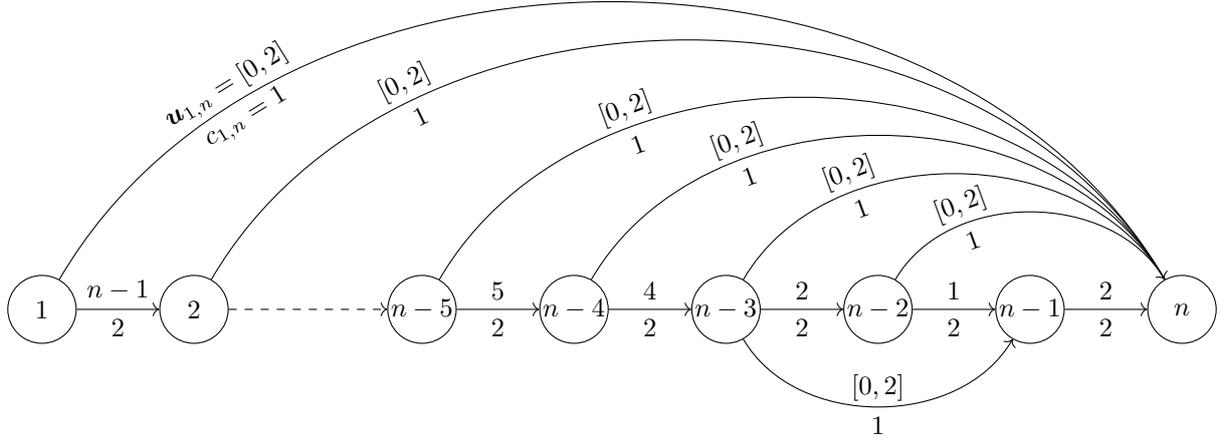
\begin{figure}[t]
        \begin{center}
    \begin{tikzpicture}[every node/.style={font=\small}]
        \usetikzlibrary{calc}
        \begin{scope}[every node/.style=ordnode]
            \foreach \n/\shift in {1/-3.5,2/-1.5,n-5/1.5,n-4/3.5,n-3/5.5,n-2/7.5,n-1/9.5,n/11.5}
            \node[label={[label position=center]$\n$}] (\n) at (\shift,0) {};
        \end{scope}
            %\node[align=center] (dots) at ($(2)!0.5!(n-3)$) {\LARGE$\cdots$};
        \draw[->] (1)--
                node[pos=0.5,sloped,yshift=0.25cm] {$n-1$} 
                node[pos=0.5,sloped,yshift=-0.25cm] {$2$}
            (2);
        \draw[->,dashed] (2)--
            (n-5);
        %\draw[,dashed] (2)--(dots);
        %\draw[,dashed] (dots)--(n-3);
        \draw[->] (n-5)--
                node[pos=0.5,sloped,yshift=0.25cm] {$5$} 
                node[pos=0.5,sloped,yshift=-0.25cm] {$2$}
            (n-4);
        \draw[->] (n-4)--
                node[pos=0.5,sloped,yshift=0.25cm] {$4$} 
                node[pos=0.5,sloped,yshift=-0.25cm] {$2$}
            (n-3);
        \draw[->] (n-3)--
                node[pos=0.5,sloped,yshift=0.25cm] {$2$} 
                node[pos=0.5,sloped,yshift=-0.25cm] {$2$}
            (n-2);
        \draw[->] (n-2)--
                node[pos=0.5,sloped,yshift=0.25cm] {$1$} 
                node[pos=0.5,sloped,yshift=-0.25cm] {$2$}
            (n-1);
        \draw[->] (n-1)--
                node[pos=0.5,sloped,yshift=0.25cm] {$2$} 
                node[pos=0.5,sloped,yshift=-0.25cm] {$2$}
            (n);
        \begin{scope}[out=60,in=120]
            \draw[] (1) to 
                node[pos=0.2,sloped,yshift=0.25cm] {$\inum{u}_{1,n}=[0,2]$} 
                node[pos=0.2,sloped,yshift=-0.25cm] {$c_{1,n}=1$}
                (n) ;
        \draw[] (2) to 
                node[pos=0.25,sloped,yshift=0.25cm] {$[0,2]$} 
                node[pos=0.25,sloped,yshift=-0.25cm] {$1$}
            (n);
        \draw[] (n-5) to
                node[pos=0.3,sloped,yshift=0.25cm] {$[0,2]$} 
                node[pos=0.3,sloped,yshift=-0.25cm] {$1$}
            (n);
        \draw[->] (n-4) to
                node[pos=0.3,sloped,yshift=0.25cm] {$[0,2]$} 
                node[pos=0.3,sloped,yshift=-0.25cm] {$1$}
            (n);
        \draw[] (n-3) to
                node[pos=0.3,sloped,yshift=0.25cm] {$[0,2]$} 
                node[pos=0.3,sloped,yshift=-0.25cm] {$1$}
            (n);
        \draw[] (n-2) to
                node[pos=0.3,sloped,yshift=0.25cm] {$[0,2]$} 
                node[pos=0.3,sloped,yshift=-0.25cm] {$1$}
            (n);
        %\draw[] (n-1) to (n);
        \end{scope}
        \draw[->,out=-60,in=-120] (n-3) to
                node[pos=0.5,sloped,yshift=0.25cm] {$[0,2]$} 
                node[pos=0.5,sloped,yshift=-0.25cm] {$1$}
            (n-1);
    \end{tikzpicture}
\end{center}
    \caption{Minimum cost flow problem with interval capacities shown in \cref{exa:lower:bound:n-1}.}
    \label{fig:n-1:edges:interior}
\end{figure}
\end{example}
     
Note that in \cref{exa:lower:bound:n-1} there are exactly $n-1$ arcs with an interval capacity. \cref{pro:lower:bound:n-1} proves that in the worst case scenario, the capacity on all of these arcs has to be set to an interior value. Therefore, these arcs form the tree constructed in the proof of \cref{pro:forest}. 

\begin{proposition}\label{pro:lower:bound:n-1}
        For the minimum cost flow problem with interval capacities presented in \cref{exa:lower:bound:n-1}, the worst optimal value $c_w$ is attained only for the scenario $u \in \mna{U}$ with $u_e = 1$ for every arc $e \in E$ with an interval capacity. 
\end{proposition}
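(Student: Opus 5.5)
We argue directly from the cut structure of the network in \cref{exa:lower:bound:n-1}. There are two things to show: that the scenario $u^1$ with $u_e=1$ on all $n-1$ interval arcs is feasible and achieves some value $C^*$, and that every other $u\in\mna{U}$ satisfies $c(u)<C^*$.

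\emph{Step 1: feasibility constraints as cuts.} By max-flow/min-cut, $u\in\mna{U}$ iff every $s$--$t$ cut has capacity at least $f=n$. For the prefix cuts $S_k=\{1,\dots,k\}$ the constraints read:
\begin{align*}
\textstyle\sum_{i\le k}u_{in}+(n-k) &\ge n \quad (k\le n-4),\\
\textstyle\sum_{i\le n-3}u_{in}+2+u_{n-3,n-1} &\ge n,\\
\textstyle\sum_{i\le n-2}u_{in}+1+u_{n-3,n-1} &\ge n,\\
\textstyle\sum_{i\le n-2}u_{in}+2 &\ge n.
\end{align*}
Writing $P_k=\sum_{i\le k}u_{in}$, these give $P_k\ge k$ for $k\le n-3$ (after accounting for $u_{n-3,n-1}\le 2$ appropriately), $P_{n-2}+u_{n-3,n-1}\ge n-1$ and $P_{n-2}\ge n-2$. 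For $u^1$ every one of these cuts is tight, so $u^1\in\mna{U}$. Moreover, a flow of value $n$ must saturate every arc of the minimum cuts. Since every arc lies in some tight prefix cut, the flow in $G_{u^1}$ is unique: it saturates all arcs. Hence $C^*=c(u^1)=\sum_e c_e u^1_e$.

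\emph{Step 2: a cheap feasible flow for any other $u$.} I will use flow decomposition into $1$--$n$ paths. Each path runs along the chain $1\to2\to\cdots$ and leaves either through a shortcut $(i,n)$, at cost $2(i-1)+1$, or through $(n-3,n-1)$ followed by $(n-1,n)$, or through $(n-2,n-1),(n-1,n)$. These exit costs are nondecreasing along the chain, so pushing flow out as early as possible can only lower the cost. Leaving early also only reduces the load on downstream chain arcs, so the greedy \quo{exit as early as possible} routing is feasible whenever $u\in\mna{U}$.

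For such a greedy flow, let $E_k=\min\{P_k,n\}$ be the cumulative amount that has exited by node $k$. The cost can then be written as
\[
\sum_k (\text{exit cost at }k)\,(E_k-E_{k-1}) \;=\; \text{const}-2\sum_k E_k \;+\;(\text{terms for the last three nodes}),
\]
via Abel summation. The cut inequalities give $E_k\ge k=E_k(u^1)$ for the relevant $k$, so the greedy cost is at most $C^*$. Since $c(u)$ is at most the cost of any feasible flow, this yields $c(u)\le C^*$.

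\emph{Step 3: strictness.} It remains to show that equality forces $u=u^1$. If some $u_{in}\ne 1$, I consider the first index $i$ with $u_{in}\ne1$. If $u_{in}<1$, then $P_i<i$, which violates a cut inequality; so $u_{in}>1$. Then $P_i>i$, some $E_k$ strictly exceeds $k$, and the greedy cost drops strictly below $C^*$. The same reasoning covers $u_{n-3,n-1}$, using the last three cut inequalities.

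\emph{Main obstacle.} The delicate part is the tail of the network around nodes $n-3,n-2,n-1$. There the chain capacities $2,1,2$ and the bypass arc $(n-3,n-1)$ make the exit costs tie: both $(n-3,n-1),(n-1,n)$ and $(n-2,n)$ cost $2n-5$ in total. The routing there is therefore not a simple linear chain. I would handle it by explicit case analysis on $u_{n-3,n-1}$ and $u_{n-2,n}$ with the last three cut inequalities. The aim is to check that $u_{n-3,n-1}\ne1$ either breaks feasibility or lets some unit of flow avoid the most expensive route $(n-2,n-1),(n-1,n)$, whose cost is $2n-2$; the latter strictly decreases the cost.
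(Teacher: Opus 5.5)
\textbf{Verdict: there is a genuine gap.} Your chain analysis for indices $k\le n-4$ (prefix cuts, earliest-exit routing, Abel summation) is sound. The argument breaks down in the tail around nodes $n-3,n-2,n-1$, which is where the uniqueness claim has real content, and there you only announce a case analysis without doing it.

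Concretely, the inequality $P_k\ge k$ fails for $k=n-3$. The cut $\{1,\dots,n-3\}$ gives only $P_{n-3}\ge n-2-u_{n-3,n-1}$, so $P_{n-3}=n-4$ is possible. For example, take $u_{n-3,n}=0$, $u_{n-3,n-1}=u_{n-2,n}=2$ and all other interval arcs at $1$. This scenario is feasible: two units take $n-3\to n-1\to n$ and two take $n-3\to n-2\to n$. Its cost is $n^2-4<C^*=n^2-3$, so the proposition survives, but your Step 3 does not:
\begin{itemize}
\item The inference ``$u_{in}<1$ violates a cut, hence $u_{in}>1$'' is false for $i=n-3$.
\item ``Some $E_k>k$, hence a strict drop'' is unjustified whenever the first deviation lies in the tail. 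Your ``terms for the last three nodes'' depend on $u_{n-3,n-1}$ and $u_{n-2,n}$ and can offset the gain.
\item ``Leaving early only reduces downstream load'' is also false there. The early exit via $(n-3,n-1)$ and the late exit via $(n-2,n-1)$ share the arc $(n-1,n)$.
\item Your planned case analysis, centred on $u_{n-3,n-1}\neq 1$, does not cover $u_{n-3,n-1}=1$ with $(u_{n-3,n},u_{n-2,n})\neq(1,1)$.
\item Nor does it cover the trade-off when extra upstream shortcut capacity lets fewer than $4$ units reach node $n-3$.
\end{itemize}

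\textbf{What is missing is an explicit tail lemma.} Write $a,b,c$ for $u_{n-3,n},u_{n-3,n-1},u_{n-2,n}$. Let $R=n-E_{n-4}\le 4$ be the flow entering $n-3$, and let $T(R)$ be the minimum tail cost. Use
\begin{itemize}
\item $c(u^1)=2\sum_{k\le n-4}(n-k)+(n-4)+13$, and
\item $c(u)\le 2\sum_{k\le n-4}(n-E_k)+E_{n-4}+T(R)$ with $E_k=\min\{P_k,n\}\ge k$.
\end{itemize}
Your chain computation then gives
\[
c(u^1)-c(u)\;\ge\;2\sum_{k=1}^{n-4}(E_k-k)+9+R-T(R)\;\ge\;17-R-T(R).
\]

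The tail consists of four paths of costs $1,3,6,3$.

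\emph{Case $R=4$.} Feasibility means $a+b\ge 2$, $a+c\ge 2$ and $a+b+c\ge 3$, and $T(4)=24-5a-3\min\{b+c,4-a\}$.
\begin{itemize}
\item If $b+c\le 4-a$: on the feasible region $5a+3(b+c)\ge\max\{9+2a,\,12-a\}\ge 11$, with equality only at $a=b=c=1$.
\item If $b+c>4-a$: $T(4)=12-2a\le 12$.
\end{itemize}
Hence $T(4)\le 13$, with equality only at $(1,1,1)$.

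\emph{Case $R<4$.}
\begin{itemize}
\item Always $T(R)\le 3R+3$, since at most one unit can use the cost-$6$ path. This settles $R<3.5$.
\item On $[3,4]$, repeat the computation with right-hand sides $R-2,R-2,R-1$. This yields $T(R)\le R+9$.
\end{itemize}
So $T(R)+R<17$ for all $R<4$.

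Equality in the display therefore forces three things: $R=4$; $E_k=k$ for every $k\le n-4$, i.e.\ $u_{in}=1$ there; and $(a,b,c)=(1,1,1)$. That is the claim.
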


\begin{proof}
        There are two types of arcs: \emph{curved arcs} with interval capacities and \emph{straight arcs} with crisp capacities. Consider the scenario $u \in \mna{U}$ from the statement, in which $u_e = 1$ for all curved arcs.
        
        Clearly, the problem is feasible for scenario $u$, since flow of size $n$ can be transported through the network.

        We prove the following auxiliary claim:
        Given a node $i \in \{1,\ldots, n-4\}$, assume that a flow of size $n-i+1$ has to be transported through $i$. If $c(u') = c_w$ for some feasible scenario $u' \in \mna{U}$, then $u'_{i,n} = 1$.
        
        First, assume for contradiction that $u'_{i,n} < 1$. Then $u'$ is not a feasible scenario, since the total capacity of arcs leaving $i$ is $n-i + u'_{i,n} < n-i+1$, so the required flow cannot be transported.
        
        Second, assume that $u'_{i,n} > 1$. Note that the arc $(i,n)$ is the least expensive path from $i$ to $n$ with costs $c_{i,n}=1$. \MR{
        Then, there exists another feasible scenario $u'' \in \inum{u}$ with $u''_{i,n} = 1$}, in which the flow $u'_{i,n}-1$ has to be transported using the straight arc $(i,i+1)$ with the cost at least $c_{i,i+1}=2 > c_{i,n}$. However, the scenario $u''$ then satisfies $c(u') < c(u'') \le c_w$.

        Hence, we can assume that $u_{i,n} = 1$ for each $i\in \{1,\ldots, n-4\}$ and that a flow of size $4$ has to be transported from node $n-3$ to $n$. There are $4$ available paths, each of them transporting exactly one unit of flow under the scenario $u$:
        \begin{enumerate}[label={Path \arabic*:},leftmargin=*,noitemsep]
            \item $(n-3) \rightarrow n$ with the cost $1$,
            \item $(n-3) \rightarrow (n-2) \rightarrow n$ with the cost $3$,
            \item $(n-3) \rightarrow (n-2) \rightarrow (n-1) \rightarrow n$ with the cost $6$, and,
            \item $(n-3) \rightarrow (n-1) \rightarrow n$ with the cost $3$.
        \end{enumerate}
        Note that the capacity of Path 3 is crisp and cannot be increased. To make the flow more expensive, some units of flow have to be transported using Paths 2 and 4. However, this is not possible: increasing the flow by $\delta$ on these paths increases the costs by $2\cdot3\cdot\delta$; however, it simultaneously decreases the cost of the flow on Path 3 by $6\delta$, and also, on Path 1 by $\delta$. Hence, any changes in capacities forcing transportation of the flow along Paths 2 and 4 lead to a decrease of cost by $\delta$. We can observe that no changes in capacities lead to the cost $c(u)$ being increased or even remaining the same. Thus, $u$ is the unique worst scenario.
    \end{proof}

An analogous result was previously proved for the transportation problem with interval supplies and demands \cite{CARRABS2021102492}, for which the number of interior values of the worst case scenario can be limited to at most one. This result may lead to the idea that the important parameter influencing the number of interior values is the distance from $s$ to $t$. However, a similar example as above can also be constructed when the distance between $s$ and $t$ is much smaller, so the distance does not directly influence the number of interior values, in general.

%%%%%%%%%%%%%%%%%%%%%%%%%%%%%%%%%%%%%%%%%%%%%%%%%%%%%%%%%%%%%%% 
\section{The more-for-less paradox}\label{sec:paradox}

\tikzset{colorLine/.style={very thick, color={blue}, text={black}}}

In classical transportation problems, a counterintuitive phenomenon may arise in certain instances, where an increase in the total volume of goods transported actually leads to a reduction in the overall transportation cost. This phenomenon, referred to as the more-for-less paradox, has been the subject of study and discussion in the existing literature. A similar situation can also be observed in the more general model of minimum cost flow problems with interval capacities: in some instances, increasing the requested flow $f$ may, surprisingly, lead to a decrease of the worst optimal cost $\cw$.

Specifically, given an instance of the minimum cost flow problem with interval capacities, we say that the \emph{more-for-less paradox} occurs if there exist requested flow amounts $f$ and $f'$ with $f < f'$ such that their respective worst optimal values $\cw, \cw'$ satisfy $\cw' < \cw$. Therefore, requesting a higher amount of flow leads to a decrease in the worst optimal value of the corresponding minimum cost flow problem. Notice that, without loss of generality, we can restrict the examination of the occurrence of the paradox to the case $f'=f+1$, since we can simply increase the requested flow one-by-one until the paradox occurs.

\begin{example}\label{exa:paradox:simple}
Consider a network flow instance on the graph illustrated in \cref{fig:ex:paradox}. The numbers above the arcs denote the interval capacities and the numbers below the arcs are costs.

For the requested flow $f = 1$, we obtain the worst optimal value $\cw = 12$ for the scenario $u_{s2} = u_{1t} = 0$ (Figure~\ref{fig:ex:paradox:2a}). However, for the increased flow $f=2$, we have $\cw = 4$, since the capacities have to be set to $u_{s2} = u_{1t} = 1$ in order to make the problem feasible (Figure~\ref{fig:ex:paradox:2b}). The increased capacities allow to send $1$ unit of flow along each of ``cheap'' paths $s$--$2$--$t$ and $s$--$1$--$t$, instead of the expensive path $s$--$1$--$2$--$t$, which was used for $f=1$.

\begin{figure}[ht]\centering
    \begin{tikzpicture}[->]
    \node[ordnode] (s) {$s$};
    \node[ordnode] (1) at (1.5, -1.5) {$1$};
    \node[ordnode] (2) at (3, 1.5) {$2$};
    \node[ordnode] (t) at (4.5,0) {$t$};

        \draw[] (s)--
                node[pos=0.5,sloped,yshift=0.25cm] {$1$} 
                node[pos=0.5,sloped,yshift=-0.25cm] {$1$}
            (1);
        \draw[] (1)--
            node[pos=0.5,sloped,yshift=0.25cm] {$1$} 
                node[pos=0.5,sloped,yshift=-0.25cm] {$10$}
            (2);
        \draw[] (2)--
                node[pos=0.5,sloped,yshift=0.25cm] {$1$} 
                node[pos=0.5,sloped,yshift=-0.25cm] {$1$}
            (t);
        \draw[] (1)--
                node[pos=0.5,sloped,yshift=0.25cm] {$[0,1]$} 
                node[pos=0.5,sloped,yshift=-0.25cm] {$1$}
            (t);
        \draw[] (s)--
                node[pos=0.5,sloped,yshift=0.25cm] {$[0,1]$} 
                node[pos=0.5,sloped,yshift=-0.25cm] {$1$}
            (2);
\end{tikzpicture}
\caption{An instance of the minimum cost flow problem with interval capacities admitting the more-for-less paradox (see \cref{exa:paradox:simple}).}\label{fig:ex:paradox}
\end{figure}
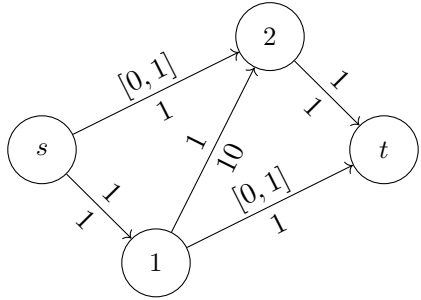 

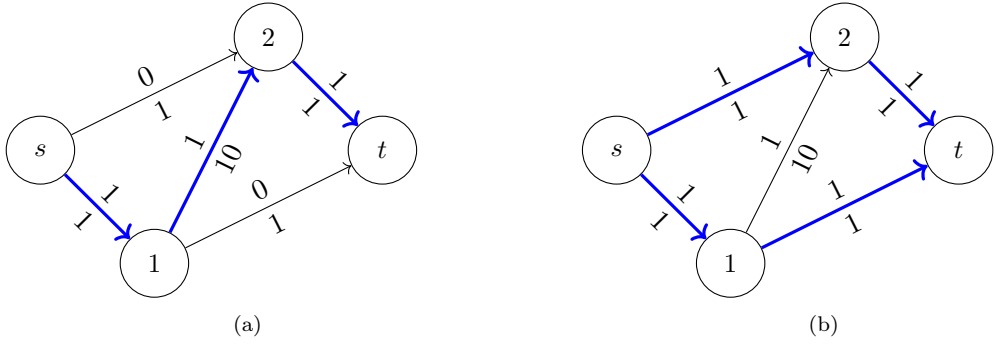
\begin{figure}[ht]
\centering
\begin{subfigure}{.4\textwidth}	
 \begin{tikzpicture}[->]
    \node[ordnode] (s) {$s$};
    \node[ordnode] (1) at (1.5, -1.5) {$1$};
    \node[ordnode] (2) at (3, 1.5) {$2$};
    \node[ordnode] (t) at (4.5,0) {$t$};

        \draw[colorLine] (s)--
                node[pos=0.5,sloped,yshift=0.25cm] {$1$} 
                node[pos=0.5,sloped,yshift=-0.25cm] {$1$}
            (1);
        \draw[colorLine] (1)--
            node[pos=0.5,sloped,yshift=0.25cm] {$1$} 
                node[pos=0.5,sloped,yshift=-0.25cm] {$10$}
            (2);
        \draw[colorLine] (2)--
                node[pos=0.5,sloped,yshift=0.25cm] {$1$} 
                node[pos=0.5,sloped,yshift=-0.25cm] {$1$}
            (t);
        \draw[] (1)--
                node[pos=0.5,sloped,yshift=0.25cm] {$0$} 
                node[pos=0.5,sloped,yshift=-0.25cm] {$1$}
            (t);
        \draw[] (s)--
                node[pos=0.5,sloped,yshift=0.25cm] {$0$} 
                node[pos=0.5,sloped,yshift=-0.25cm] {$1$}
            (2);
\end{tikzpicture}
  \caption{}
  \label{fig:ex:paradox:2a}
\end{subfigure}%
\hspace{30pt}
\begin{subfigure}{.4\textwidth}	
 \begin{tikzpicture}[->]
    \node[ordnode] (s) {$s$};
    \node[ordnode] (1) at (1.5, -1.5) {$1$};
    \node[ordnode] (2) at (3, 1.5) {$2$};
    \node[ordnode] (t) at (4.5,0) {$t$};

        \draw[colorLine] (s)--
                node[pos=0.5,sloped,yshift=0.25cm] {$1$} 
                node[pos=0.5,sloped,yshift=-0.25cm] {$1$}
            (1);
        \draw[] (1)--
            node[pos=0.5,sloped,yshift=0.25cm] {$1$} 
                node[pos=0.5,sloped,yshift=-0.25cm] {$10$}
            (2);
        \draw[colorLine] (2)--
                node[pos=0.5,sloped,yshift=0.25cm] {$1$} 
                node[pos=0.5,sloped,yshift=-0.25cm] {$1$}
            (t);
        \draw[colorLine] (1)--
                node[pos=0.5,sloped,yshift=0.25cm] {$1$} 
                node[pos=0.5,sloped,yshift=-0.25cm] {$1$}
            (t);
        \draw[colorLine] (s)--
                node[pos=0.5,sloped,yshift=0.25cm] {$1$} 
                node[pos=0.5,sloped,yshift=-0.25cm] {$1$}
            (2);
\end{tikzpicture}
  \caption{}
  \label{fig:ex:paradox:2b}
\end{subfigure}
\caption{The more-for-less paradox in \cref{exa:paradox:simple}. For the requested flow $f=1$ we have $c_w = 12$ \textit{(a)} , while for $f=2$  we obtain $c_w = 4$ \textit{(b)}.}\label{fig:ex:paradox:2}
\end{figure}
\end{example}

We will show that the more-for-less paradox is related to the existence of augmenting paths for the flow. Formally, an \emph{augmenting path} \MR{for a flow $x$ }in the graph is a sequence of arcs corresponding to an undirected path from the source $s$ to the sink $t$ with $x_{ij} < u_{ij}$ for each forward arc $(i,j)$ on the path and $x_{ij} > 0$ for each backward arc on the path.\MR{ Note that the cost of an augmenting path is calculated as follows: a forward arc $e$ contributes $c_e$ to the total cost, whereas a backward arc $e$ contributes $-c_e$.}

\MR{The crucial property of the cost function of the instance in \cref{exa:paradox:simple} is that if we consider flow along the path $s$--$1$--$2$--$t$, there is an augmenting path $s$--$2$--$1$--$t$ (with respect to the scenario $\onum{u}$ shown in \cref{fig:ex:paradox:2b}).} However, \cref{exa:paradox:complex} illustrates that the more-for-less paradox can also occur in networks with a more complex structure, showing that there can be multiple sequences of reversed arcs on an augmenting path.

\begin{example}\label{exa:paradox:complex}
Consider the graph depicted in \cref{fig:ex:paradox:complex}. Again, the numbers above arcs are capacities and the numbers below arcs are costs.

For the requested flow $f = 1$, we have the worst optimal value $\cw = 29$ for the scenario $u_{s2} = u_{14} = u_{3t}= 0$ (Figure~\ref{fig:ex:paradox:complex:2a}). However, for the increased flow $f=2$ we have $\cw = 27$, since the capacities have to be set to $u_{s2} = u_{14} = u_{3t}= 1$ to make the requested flow feasible (Figure~\ref{fig:ex:paradox:complex:2b}). Now, the increased capacities allow to send $1$ unit of flow along each of ``cheaper'' paths $s$--$1$--$4$--$t$ and $s$--$2$--$3$--$t$, instead of the expensive path $s$--$1$--$2$--$3$--$4$--$t$ used for $f=1$.

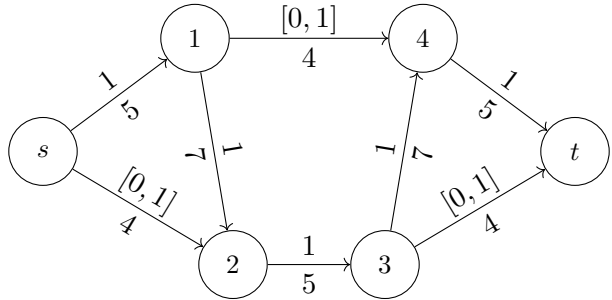
\begin{figure}[ht]
\begin{center}
    \begin{tikzpicture}[->]
    \node[ordnode] (s) {$s$};
    \node[ordnode] (1) at (2, 1.5) {$1$};
    \node[ordnode] (2) at (2.5, -1.5) {$2$};
    \node[ordnode] (3) at (4.5, -1.5) {$3$};
    \node[ordnode] (4) at (5, 1.5) {$4$};
    \node[ordnode] (t) at (7,0) {$t$};

        \draw[] (s)--
                node[pos=0.5,sloped,yshift=0.25cm] {$1$} 
                node[pos=0.5,sloped,yshift=-0.25cm] {$5$}
            (1);
            \draw[] (s)--
                node[pos=0.5,sloped,yshift=0.25cm] {$[0,1]$} 
                node[pos=0.5,sloped,yshift=-0.25cm] {$4$}
            (2);
        \draw[] (1)--
            node[pos=0.5,sloped,yshift=0.25cm] {$1$} 
                node[pos=0.5,sloped,yshift=-0.25cm] {$7$}
            (2);
        \draw[] (3)--
                node[pos=0.5,sloped,yshift=0.25cm] {$1$} 
                node[pos=0.5,sloped,yshift=-0.25cm] {$7$}
            (4);
        \draw[] (2)--
                node[pos=0.5,sloped,yshift=0.25cm] {$1$} 
                node[pos=0.5,sloped,yshift=-0.25cm] {$5$}
            (3);
            \draw[] (1)--
                node[pos=0.5,sloped,yshift=0.25cm] {$[0,1]$} 
                node[pos=0.5,sloped,yshift=-0.25cm] {$4$}
            (4);
        \draw[] (3)--
                node[pos=0.5,sloped,yshift=0.25cm] {$[0,1]$} 
                node[pos=0.5,sloped,yshift=-0.25cm] {$4$}
            (t);
            \draw[] (4)--
                node[pos=0.5,sloped,yshift=0.25cm] {$1$} 
                node[pos=0.5,sloped,yshift=-0.25cm] {$5$}
            (t);
\end{tikzpicture}
\end{center}
\caption{A more complex instance of the minimum cost flow problem with interval capacities admitting the more-for-less paradox (see \cref{exa:paradox:complex}).}\label{fig:ex:paradox:complex}
\end{figure}

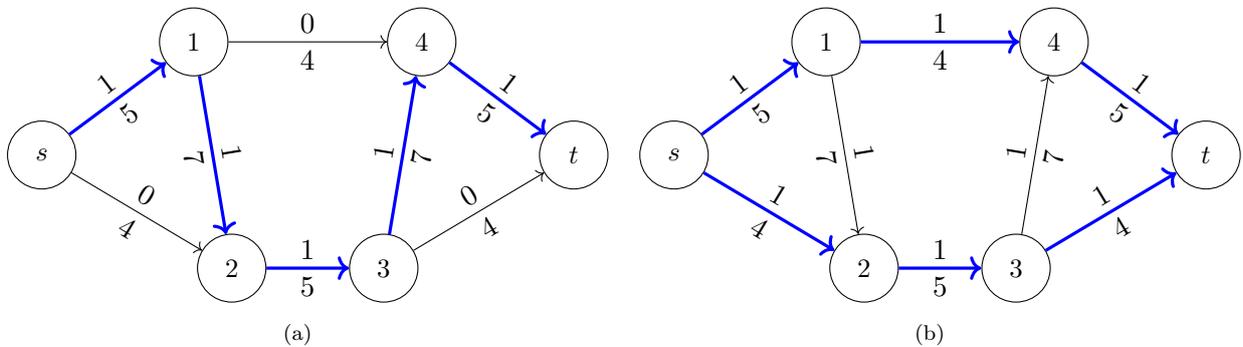
\begin{figure}[ht]
\begin{center}
\begin{subfigure}{.48\textwidth}	
    \begin{tikzpicture}[->]
    \node[ordnode] (s) {$s$};
    \node[ordnode] (1) at (2, 1.5) {$1$};
    \node[ordnode] (2) at (2.5, -1.5) {$2$};
    \node[ordnode] (3) at (4.5, -1.5) {$3$};
    \node[ordnode] (4) at (5, 1.5) {$4$};
    \node[ordnode] (t) at (7,0) {$t$};

        \draw[colorLine] (s)--
                node[pos=0.5,sloped,yshift=0.25cm] {$1$} 
                node[pos=0.5,sloped,yshift=-0.25cm] {$5$}
            (1);
            \draw[] (s)--
                node[pos=0.5,sloped,yshift=0.25cm] {$0$} 
                node[pos=0.5,sloped,yshift=-0.25cm] {$4$}
            (2);
        \draw[colorLine] (1)--
            node[pos=0.5,sloped,yshift=0.25cm] {$1$} 
                node[pos=0.5,sloped,yshift=-0.25cm] {$7$}
            (2);
        \draw[colorLine] (3)--
                node[pos=0.5,sloped,yshift=0.25cm] {$1$} 
                node[pos=0.5,sloped,yshift=-0.25cm] {$7$}
            (4);
        \draw[colorLine] (2)--
                node[pos=0.5,sloped,yshift=0.25cm] {$1$} 
                node[pos=0.5,sloped,yshift=-0.25cm] {$5$}
            (3);
            \draw[] (1)--
                node[pos=0.5,sloped,yshift=0.25cm] {$0$} 
                node[pos=0.5,sloped,yshift=-0.25cm] {$4$}
            (4);
        \draw[] (3)--
                node[pos=0.5,sloped,yshift=0.25cm] {$0$} 
                node[pos=0.5,sloped,yshift=-0.25cm] {$4$}
            (t);
            \draw[colorLine] (4)--
                node[pos=0.5,sloped,yshift=0.25cm] {$1$} 
                node[pos=0.5,sloped,yshift=-0.25cm] {$5$}
            (t);
\end{tikzpicture}
\caption{}
\label{fig:ex:paradox:complex:2a}
\end{subfigure}
\hfill
\begin{subfigure}{.48\textwidth}	
\begin{tikzpicture}[->]
    \node[ordnode] (s) {$s$};
    \node[ordnode] (1) at (2, 1.5) {$1$};
    \node[ordnode] (2) at (2.5, -1.5) {$2$};
    \node[ordnode] (3) at (4.5, -1.5) {$3$};
    \node[ordnode] (4) at (5, 1.5) {$4$};
    \node[ordnode] (t) at (7,0) {$t$};

        \draw[colorLine] (s)--
                node[pos=0.5,sloped,yshift=0.25cm] {$1$} 
                node[pos=0.5,sloped,yshift=-0.25cm] {$5$}
            (1);
            \draw[colorLine] (s)--
                node[pos=0.5,sloped,yshift=0.25cm] {$1$} 
                node[pos=0.5,sloped,yshift=-0.25cm] {$4$}
            (2);
        \draw[] (1)--
            node[pos=0.5,sloped,yshift=0.25cm] {$1$} 
                node[pos=0.5,sloped,yshift=-0.25cm] {$7$}
            (2);
        \draw[] (3)--
                node[pos=0.5,sloped,yshift=0.25cm] {$1$} 
                node[pos=0.5,sloped,yshift=-0.25cm] {$7$}
            (4);
        \draw[colorLine] (2)--
                node[pos=0.5,sloped,yshift=0.25cm] {$1$} 
                node[pos=0.5,sloped,yshift=-0.25cm] {$5$}
            (3);
            \draw[colorLine] (1)--
                node[pos=0.5,sloped,yshift=0.25cm] {$1$} 
                node[pos=0.5,sloped,yshift=-0.25cm] {$4$}
            (4);
        \draw[colorLine] (3)--
                node[pos=0.5,sloped,yshift=0.25cm] {$1$} 
                node[pos=0.5,sloped,yshift=-0.25cm] {$4$}
            (t);
            \draw[colorLine] (4)--
                node[pos=0.5,sloped,yshift=0.25cm] {$1$} 
                node[pos=0.5,sloped,yshift=-0.25cm] {$5$}
            (t);
\end{tikzpicture}
\caption{}
\label{fig:ex:paradox:complex:2b}
\end{subfigure}
\end{center}
\caption{The more-for-less paradox in \cref{exa:paradox:complex}. For the requested flow $f=1$ we have $c_w = 29$ \textit{(a)}, while for $f=2$ we obtain $c_w = 27$ \textit{(b)}.}\label{fig:ex:paradox:complex:2}
\end{figure}
\end{example}

\subsection{Characterization of the paradox}
Let us now examine the structure of the network flow problems admitting the more-for-less paradox. \cref{propParadoxNec} shows a necessary condition for the paradox to occur based on augmenting paths.

\begin{proposition}[Necessary condition for the paradox]\label{propParadoxNec}
Let $G = (V, E)$ be a directed graph with a source~$s \in V$, sink $t \in V$ and a nonnegative cost function $c\colon E \to \R^+$. If the more-for-less paradox occurs for some interval capacities $\inum{u}\colon E \rightarrow \IR$ and a~flow size~$f$, then there exists an $s$--$t$ flow $x^f$ of size~$f$ feasible for a scenario $u \in \inum{u}$ and an augmenting path for $x^f$ (with respect to $u$) with a negative cost.
\end{proposition}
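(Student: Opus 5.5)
Throughout, I assume the paradox is witnessed with $f' = f+1$, as noted after the definition. So let $u^f \in \mna{U}$ be a worst scenario for flow $f$ with $c(u^f) = \cw$, and let $\cw' < \cw$ be the worst optimal value for $f+1$.

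Take any scenario $u' \in \inum{u}$ feasible for $f+1$; for instance $u' = \onum{u}$ works, since feasibility for $f+1$ is assumed. Then $c_{f+1}(u') \le \cw' < \cw$. The idea is to compare an optimal flow for $f$ in the worst scenario with an optimal flow for $f+1$ in a suitable common scenario. To get a single scenario, set $u \coloneqq \max\{u^f, u'\}$ componentwise. Then $u \in \inum{u}$, and $u$ is feasible for both $f$ and $f+1$. Since $u \ge u'$, we have $c_{f+1}(u) \le c_{f+1}(u') \le \cw' < \cw$.

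Now let $x^f$ be an optimal flow of size $f$ in $u^f$ with cost $\cw$; it is also feasible in $u$. Let $y$ be an optimal flow of size $f+1$ in $u$, so $c^Ty < c^Tx^f$. Consider the difference $y - x^f$ in the residual network of $x^f$ with respect to $u$. It is a circulation-plus-one-unit object: it has excess $1$ at $s$ and $-1$ at $t$. Forward residual use occurs where $y_e > x^f_e$, which implies $x^f_e < y_e \le u_e$, and backward use occurs where $y_e < x^f_e$, which implies $x^f_e > 0$. By the standard flow decomposition of the residual difference (as in \citet{ahuja_network_1993}), $y - x^f$ decomposes into augmenting $s$--$t$ paths with total weight $1$, together with residual cycles, all with respect to $x^f$ and $u$. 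The total cost is $c^Ty - c^Tx^f < 0$. Hence some path or some cycle in the decomposition has negative cost.

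If a path is negative, we are done: $x^f$ is feasible for $u$ and has a negative-cost augmenting path. The main obstacle is the case in which all the negativity comes from a residual cycle. I would handle it as follows. A negative residual cycle means $x^f$ is not optimal for $f$ in $u$. Cancelling such cycles (decreasing the cost) yields a flow $\tilde x$ of size $f$ in $u$ that is optimal for $u$, with $c^T\tilde x = c_f(u) \le \cw$. Then I redo the decomposition of $y - \tilde x$. Since $\tilde x$ is optimal, there are no negative residual cycles, so the cycles have nonnegative cost. Using $c^Ty < c^T\tilde x$ would then require a negative path, but this needs $c^Ty < c_f(u)$, which is not guaranteed, since $c_f(u)$ may be below $\cw$.

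So the choice of scenario matters. I would instead choose $u$ minimal: set $u \coloneqq \max\{\unum{u}, x^f, y'\}$, where $y'$ is a feasible $(f+1)$-flow in $\onum{u}$, and use \cref{pro:minimal:capacities}-style reasoning to keep $x^f$ optimal, or at least ensure $c_f(u) = \cw$. Indeed, $u \ge u^f$ is not needed. With the minimal choice $u = \max\{\unum{u}, x^f\}$ for the $f$-problem, \cref{pro:minimal:capacities} gives optimality of $x^f$ in that scenario. The remaining difficulty is to raise capacities just enough for $f+1$ while keeping $c_f(u) = \cw$. If raising them destroys optimality of $x^f$, a negative residual cycle appears that uses the raised arcs. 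The plan is to argue that such a cycle combines with the $s$--$t$ augmenting direction through those arcs into a negative augmenting path for a cycle-cancelled flow. This is the step I expect to require the most care.
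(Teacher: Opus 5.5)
Your argument settles only the case in which some path of the decomposition of $y-x^f$ is negative. The case in which the negativity sits in residual cycles is the real content of the statement, and you leave it as a plan that cannot work as stated.

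For every scenario $u$ feasible for $f+1$, the flow $\frac{f}{f+1}y$ (with $y$ optimal for $f+1$ in $u$) is feasible for $f$. Since costs are nonnegative, this gives $c_f(u)\le c_{f+1}(u)\le\cw'<\cw$. Two consequences follow.
\begin{enumerate}
\item You can never ``keep $c_f(u)=\cw$'' in a scenario feasible for $f+1$. The worst-case flow $x^f$ is never optimal there, so negative residual cycles for $x^f$ are unavoidable.
\item By the same scaling argument, a flow that is optimal for $f$ in $u$ (such as your cycle-cancelled $\tilde x$) has no negative augmenting path with respect to $u$. The inequality $c^Ty<c_f(u)$ you hoped for is therefore not merely unguaranteed but false.
\end{enumerate}
The negative path must be found for the non-optimal $x^f$. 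What has to be shown is not that negative residual cycles are absent, but that, for a well-chosen $(f+1)$-scenario, none of them occurs in the decomposition of $y-x^f$.

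The paper takes exactly your first step: it sets $u^*=\max\{u^f,u^{f+1}\}$ and decomposes $x^{f+1}-x^f$. It then disposes of the cycle case by claiming that a negative cycle contradicts optimality of $x^{f+1}$. You were right not to accept this. Such a cycle $C$ is residual for $x^f$ (w.r.t.\ $u^*$). For $x^{f+1}$, only $-C$ is residual, it has positive cost, and it is residual only w.r.t.\ $u^*$, not $u^{f+1}$.

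One way to close the gap is to choose the scenario for $f+1$ carefully.
\begin{enumerate}
\item Let $x\coloneqq x^f$ be integral and, by \cref{pro:minimal:capacities}, optimal in $u_x\coloneqq\max\{\unum{u},x\}$ with $c^Tx=\cw$.
\item Consider the residual graph of $x$ w.r.t.\ $\onum{u}$; it contains an $s$--$t$ path because $\onum{u}$ admits a flow of size $f+1$. Give weight $0$ to arcs already residual w.r.t.\ $u_x$, and weight $1$ to the remaining ones (forward arcs with $\unum{u}_e\le x_e<\onum{u}_e$).
\item Let $Q$ be a shortest $s$--$t$ path for these weights. Let $u_Q$ be $u_x$ increased by $1$ on the weight-$1$ arcs of $Q$; then $x+Q$ is feasible for $u_Q\in\inum{u}$.
\item Every level set $S_k=\{v:\mathrm{dist}(v)\le k\}$ with $k<\mathrm{dist}(t)$ is closed under weight-$0$ arcs. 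Hence all arcs leaving $S_k$ are saturated by $x$ in $u_x$, and all arcs entering it carry no flow, so $u_x(\delta^+(S_k))=f$.
\item Distances are nondecreasing along $Q$, so $Q$ leaves $S_k$ exactly once, through one raised arc. Thus $u_Q(\delta^+(S_k))=f+1$.
\item An optimal $(f+1)$-flow $y$ in $u_Q$ must therefore saturate $\delta^+(S_k)$ and vanish on $\delta^-(S_k)$. So $y-x$ equals $1$ on that raised arc and $0$ on the rest of the cut.
\item In a conformal decomposition of $y-x$, every path must cross $S_k$ through that arc. The paths have total weight $1$, so no cycle uses a raised arc.
\item Every cycle is then residual for $x$ w.r.t.\ $u_x$ and has nonnegative cost by optimality of $x$. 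Since $c^Ty=c_{f+1}(u_Q)\le\cw'<\cw=c^Tx$, some path in the decomposition has negative cost. It is an augmenting path for $x$ w.r.t.\ $u_Q$.
\end{enumerate}
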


\begin{proof}
Let $u^f$ be the capacities corresponding to the worst case scenario for flow of size~$f$, and analogously $u^{f+1}$ for a flow of size $f+1$. Let $x^f$ and $x^{f+1}$ be the corresponding optimal flows. Define $u_e^*\coloneqq \max\{u_e^f,u_e^{f+1}\}$ for each arc $e \in E$.

Define the difference of flows $x^*\coloneqq x^{f+1}-x^{f}$, which corresponds to a flow of size~1. Since $c(u^f)>c(u^{f+1})$, the total cost of $x^*$ is negative. If \MR{the arcs with a nonzero flow in $x^*$ correspond} to a path, then the proof is complete, since \MR{they form} an augmenting path for $x^f$ under capacity $u^* \in \inum{u}$. Otherwise, $x^*$ consists of a path $P$ and several cycles. 

Either the path $P$ has a negative cost and thus forms an augmenting path from the statement, or there is at least one cycle~$C$ with a negative cost. 
However, the latter case contradicts the assumption that $x^{f+1}$ is the minimum cost flow of size~$f+1$, since we can decrease the cost of the flow by involving the cycle~$C$.
\end{proof}

On the other hand, \cref{propParadoxSuf} provides a sufficient condition for the paradox to occur in a minimum cost flow problem with interval capacities.

\begin{proposition}[Sufficient condition for the paradox]\label{propParadoxSuf}
Let $G = (V, E)$ be a directed graph with a source~$s \in V$, sink $t \in V$ and a nonnegative cost function $c\colon E \to \R^+$.
If there \MR{exist capacities $u \colon E \to \R^+$,} an $s$--$t$ flow $x^f$ of size~$f$ \MR{feasible for $u$} and an augmenting path for $x^f$ with a negative cost, then there exist interval capacities admitting the more-for-less paradox.
\end{proposition}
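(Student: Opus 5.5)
The plan is to build an explicit instance from the data in the statement, modelled on \cref{exa:paradox:simple}. Let $P$ be the negative-cost augmenting path for $x^f$ with respect to $u$, and let $P^+$ and $P^-$ be its forward and backward arcs. Put $\delta>0$ equal to the bottleneck of $P$, i.e.\ the minimum of $u_e-x^f_e$ over $e\in P^+$ and of $x^f_e$ over $e\in P^-$. Integrality of bounds and of the flow size is only assumed w.l.o.g.\ in the paper, so one can rescale at the end; the paradox concerns requested amounts $f<f'$, and here $f'=f+\delta$.

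The interval capacities I would take are:
\begin{itemize}
\item crisp $\inum{u}_e=[x^f_e,x^f_e]$ for every arc $e\notin P^+$, with value $0$ off the support of $x^f$, which effectively deletes those arcs;
\item $\inum{u}_e=[x^f_e,x^f_e+\delta]$ for $e\in P^+$.
\end{itemize}
The argument then has three steps.

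\emph{First}, show $\cw(f)\ge c(x^f)$. Take the scenario $\lb{u}$. The feasible flows of size $f$ are the $y\le x^f$ of value $f$. For any such $y$, the difference $x^f-y$ is a nonnegative circulation supported on $\operatorname{supp}(x^f)$. If $x^f$ has no directed cycle in its support, this forces $y=x^f$, so $\cw(f)\ge c(\lb{u})=c(x^f)$.

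\emph{Second}, show that $f+\delta$ is exactly the maximum flow in scenario $\ub{u}$. The flow $x^f+\delta\chi_P$ saturates every arc: on $P^-$ and off $P$ the capacity equals the flow, and on $P^+$ the capacity $x^f_e+\delta$ equals the new flow. Hence the only residual arcs are reverse arcs, so no augmenting path exists. Since every scenario is dominated by $\ub{u}$, each feasible scenario for $f+\delta$ carries a maximum flow of $G_{\ub{u}}$. Moreover, any two maximum flows differ by a circulation in the residual graph of $x^f+\delta\chi_P$. This residual graph consists only of reversed support arcs, so acyclicity of $\operatorname{supp}(x^f)\cup P^+$ would again make the flow essentially unique. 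That would give $\cw(f+\delta)=c(x^f)+\delta\,c(P)<c(x^f)\le\cw(f)$, which is the paradox.

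\emph{Third}, and this is the main obstacle, justify acyclicity. The given $x^f$ may contain directed cycles, and $P^+$ together with the support may create undirected or directed cycles that let the adversary reroute flow. My plan is to preprocess before the construction:
\begin{itemize}
\item Cancel directed cycles of $x^f$ one by one. Each has nonnegative cost, so cancelling cannot increase the cost.
\item Whenever a cancellation drives some arc of $P^-$ to zero, splice $P$ with the cancelled cycle. This yields a new augmenting path, or a cycle of negative cost. A negative cycle together with an $s$--$t$ path can again be turned into a negative augmenting path for a cheaper flow.
\end{itemize}
To make this terminate, I would choose the triple (flow, capacities, path) minimizing $|\operatorname{supp}(x^f)\cup P|$ and argue by contradiction, as in the proof of \cref{pro:forest}. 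If this cleanup fails, the fallback is to give the off-path support arcs much larger costs than everything else and compare costs directly instead of relying on uniqueness. In that case I would use the convexity of $c(u)$ from \cref{pro:convexity} to restrict attention to the vertices of $\mna{U}$ for $f+\delta$, and bound $c$ at each such vertex by $c(x^f)+\delta\,c(P)$.
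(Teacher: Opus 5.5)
Your proposal has genuine gaps. Step 2 relies on a false claim that acyclicity does not repair. Step 3 cannot be completed, because the proposition as stated is false.

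\textbf{Step 2.} The flow $z=x^f+\delta\chi_P$ does \emph{not} saturate the arcs of $P^-$: there $z_e=x^f_e-\delta<x^f_e=\ub{u}_e$. So the residual graph of $z$ contains forward copies of these arcs, and the adversary can lower an \emph{interior} arc of $P^+$ to force a different augmenting path of nonnegative cost. Concretely, take nodes $s,i,v,j,t$. Let $x^f$ be $2$ on $(s,i)$ and $1$ on $(i,v),(i,j),(v,t),(j,t)$, so $f=2$. Let $P=s\to v\to j\leftarrow i\to t$, so $P^+=\{(s,v),(v,j),(i,t)\}$. Let $c_{ij}=10$ and all other costs be $0$. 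Then $c(P)=-10$, $\delta=1$, and $\operatorname{supp}(x^f)\cup P^+$ is a DAG; yet $i\to j\to v\to i$ is a residual cycle of $z$. In your instance $c_w(2)=10$. The scenario $u_{sv}=u_{it}=1$, $u_{vj}=0$ is feasible for $f+\delta=3$, and its only flow is $x^f$ augmented along $s\to v\leftarrow i\to t$, which costs $10$. Similar scenarios give $c_w\equiv 10$ on $[2,3]$, while for $g\le 2$ the scenario $\lb{u}$ is feasible and $c_w$ is nondecreasing, so no paradox occurs at all. What is missing is therefore a mechanism that excludes such nonnegative detours, not just directed cycles. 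In this example, replacing $x^f$ by the single path $s\to i\to j\to t$ does produce the paradox. So some minimal choice of $(x^f,P)$ may be the right idea, but you would have to prove that minimality rules out every such detour.

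\textbf{Step 3.} The proposition fails whenever the flow on a backward arc of $P$ is carried only by a directed cycle of $x^f$. Take arcs $(s,j),(j,i),(i,t)$ of cost $1$ and $(i,j)$ of cost $10$, with $u\equiv 2$. Let $x^f$ be $1$ on $(s,j),(i,j),(i,t)$ and $2$ on $(j,i)$, so $f=1$. Then $s\to j\leftarrow i\to t$ is an augmenting path of cost $-8$. However, every flow $y$ of size $g$ costs $3g+11y_{ij}$, and $y_{ij}=0$ is always available. Hence $c_w(g)=3g$ for every feasible $g$ and \emph{every} choice of intervals, so no paradox can occur. Your cycle-cancelling step destroys the backward arc, and splicing gives $s\to j\to i\to t$ of cost $3$, which is neither a negative path nor a negative cycle. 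The hypothesis must therefore exclude this case, e.g.\ by requiring $x^f$ to be acyclic. Your fallback of inflating costs is also unavailable, since $c$ is part of the data.

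\textbf{Comparison with the paper.} The paper uses intervals $[0,\max\{x^f_e,x^{f+1}_e\}]$ and, instead of proving uniqueness, repeatedly transfers worsening cycles into $x^f$. Its first step asserts that $x^f$ is optimal for capacities $x^f$, which fails in the second example. In the first example, the worsening cycle $i\to j\to v\to i$ cannot be pushed into $x^f$, because $x^f_{vj}=0$. So the paper's argument does not close these gaps either.
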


\begin{proof}
Let $x^f$ be a flow of size $f$ and the total cost $c^f$, and denote by $P$ the augmenting $s$--$t$ path with a negative cost. Denote by $x^{f+1}$ the flow of size $f+1$ created from $x^f$ by sending in additional unit of flow along the path~$P$. Further, set the interval capacities on all arcs to $\inum{u}_e\coloneqq [0,\max \{ x_e^f,x_e^{f+1} \}]$. Denote by $c^f_w$ and $c^{f+1}_w$ the worst optimal costs corresponding to the interval capacities $\inum{u}$ and flow sizes $f$ and $f+1$, respectively.

First, observe that $c^f\leq c^f_w$ holds, since $x^f$ is an optimal flow for capacities $u\coloneqq x^f$. Second, since $P$ is an augmenting path with negative cost, then $c^{f+1}<c^f\leq c^f_w$. 

If $x^{f+1}$ is the worst case optimal solution, then $c^{f+1}_w=c^{f+1}<c^f_w$ holds, finishing the proof. Otherwise, there is a worst-case solution $x^*$ with cost $c^{f+1}_w>c^{f+1}$. In this case, we can obtain $x^*$ from $x^{f+1}$ by a finite sequence of worsening cycles, i.e. cycles where increasing the flow by one unit along it also increases the total cost. 

Consider such a worsening cycle~$C$. The cycle~$C$ uses some arcs from the augmenting path $P$ and some arcs outside $P$. On the arcs $e \notin P$ the flow is decreased, since otherwise it would exceed the upper bound capacity $\ub{u}_e = \max \{ x_e^f,x_e^{f+1} \} = x_e^f = x_e^{f+1}$. Let us now modify the flow $x^f$ into flow $\tilde{x}^f$ by sending one unit of flow along the cycle $C$. Since $C$ has a positive cost, the corresponding total cost increases to $\tilde{c}^f>c^f$. 
Now, we repeat the above reasoning by using $\tilde{x}^f$ instead of $x^f$. Notice that this procedure is finite, as we strictly increase the costs of the flows in each iteration. After the end of this procedure, there cannot exist a worsening cycle.
\end{proof}

Combining the two conditions, we obtain a complete characterization of the more-for-less paradox stated in \cref{thm:paradox}. Note that the characterization is based only on the underlying graph and costs associated with the arcs.

\begin{theorem}\label{thm:paradox}
Let $G = (V, E)$ be a directed graph with a source~$s \in V$, sink $t \in V$ and a nonnegative cost function $c\colon E \to \R^+$. 
Then, the following statements are equivalent:
\begin{enumerate}[label={(\alph*})]
    \item There exist interval capacities $\inum{u}\colon E \to \IR$ and a flow size $f$ for which the more-for-less paradox occurs.
    \item There exist interval capacities $\inum{u}\colon E \to \IR$, a flow $x^f$ of size~$f$ feasible for a scenario $u \in \inum{u}$ and an augmenting path for $x^f$ (with respect to $u$) with a negative cost.    
\end{enumerate}
\end{theorem}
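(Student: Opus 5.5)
The theorem is essentially the conjunction of \cref{propParadoxNec} and \cref{propParadoxSuf}, so the plan is to prove the two implications separately, each by invoking one of these propositions. The only real work is checking that the hypotheses and conclusions line up. The point needing care is that (b) quantifies over interval capacities $\inum{u}$ and a scenario $u \in \inum{u}$, while \cref{propParadoxSuf} is stated for plain capacities $u\colon E\to\R^+$.

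For (a) $\Rightarrow$ (b), assume the paradox occurs for some $\inum{u}$ and $f$. As noted before \cref{exa:paradox:simple}, we may take $f' = f+1$. Then \cref{propParadoxNec} gives a flow $x^f$ of size $f$, feasible for a scenario $u \in \inum{u}$, together with a negative-cost augmenting path for $x^f$ with respect to $u$. This is literally statement (b), with the same interval capacities $\inum{u}$.

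In that proof the relevant scenario is $u^* = \max\{u^f, u^{f+1}\}$, and I would recheck two points explicitly. First, $u^*\in\inum{u}$, since it is a componentwise maximum of two members of the box $\inum{u}$. Second, the support of $x^{f+1}-x^f$ is augmenting for $x^f$ under $u^*$: forward arcs satisfy $x^f_e < x^{f+1}_e \le u^{f+1}_e \le u^*_e$, and backward arcs satisfy $x^f_e > x^{f+1}_e \ge 0$. The path/cycle decomposition step also needs a word. The difference $x^{f+1}-x^f$ decomposes, in the residual sense with respect to $x^f$ and $u^*$, into one $s$--$t$ path plus cycles, and the total cost is negative. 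Hence either the path is negative, or some cycle is negative. A negative cycle would be a negative residual cycle for $x^{f+1}$ in scenario $u^{f+1}$, after reversing signs on the decomposition, which contradicts optimality of $x^{f+1}$ via \citet[Thm 9.1]{ahuja_network_1993}.

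For (b) $\Rightarrow$ (a), take the data from (b) and forget the intervals. The scenario $u$ is an ordinary capacity function $E\to\R^+$, $x^f$ is feasible for it, and the augmenting path has negative cost. These are exactly the hypotheses of \cref{propParadoxSuf}, which then produces interval capacities (namely $[0,\max\{x^f_e,x^{f+1}_e\}]$, possibly after the iterative modification in its proof) together with a flow size for which the paradox occurs. This is (a). The flow size produced is $f$, the size of the final modified flow, which has the same size as the original; I would note this explicitly.

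The main obstacle is not the equivalence itself but the robustness of \cref{propParadoxSuf}'s iteration. After the procedure replaces $x^f$ by $\tilde x^f$, the interval capacities must be redefined from the new pair $\tilde x^f$, $\tilde x^{f+1}$, and the procedure must terminate. If I needed to shore this up, I would argue finiteness using integrality (\cref{pro:integer:limits:means:integer:capacities:in:worst:scenario}): costs strictly increase over a finite set of integral flows bounded by the original capacities. At termination no worsening cycle exists, so $c_w^{f+1} = c^{f+1} < c^f \le c_w^f$.
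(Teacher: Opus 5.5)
Your reduction matches the paper's proof, which only cites \cref{propParadoxNec,propParadoxSuf}. However, several of the supporting arguments you add do not hold up. More seriously, the direction (b)$\Rightarrow$(a) is false as literally stated, so quoting \cref{propParadoxSuf} cannot prove it. The failure is not only in the termination of the iteration that you single out. It is already in the first step, ``$x^f$ is optimal for $u\coloneqq x^f$, hence $c^f\le c^f_w$'', which breaks as soon as $x^f$ carries a circulation of positive cost, and (b) permits this. Take $V=\{s,a,b,t\}$, arcs $(s,b),(b,a),(a,t)$ of cost $0$ and $(a,b)$ of cost $1$, $\inum{u}_e=[2,2]$ for all arcs, and the flow of size $f=1$ with $x_{sb}=x_{ab}=x_{at}=1$ and $x_{ba}=2$. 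The undirected path $s,b,a,t$ uses $(s,b)$ forward, $(a,b)$ backward and $(a,t)$ forward. It is augmenting with cost $-1$, so (b) holds. But every flow of size $g$ dominates componentwise the flow sending $g$ units along $s\to b\to a\to t$, which has cost $0$. Hence every feasible scenario has optimal value $0$, the worst optimal value is $0$ for every flow size, and no interval capacities produce the paradox. So (b) must at least be restricted to flows whose support contains no directed cycle of positive cost. That is exactly what makes $x^f$ optimal for $u=x^f$, and the optimal flow obtained in (a)$\Rightarrow$(b) has this property. Your integrality-based termination argument also does not apply as written, since the capacities and the flow in (b) need not be integral.

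In (a)$\Rightarrow$(b), your treatment of the cycle case is incorrect. Let $C$ be a cycle in the conformal decomposition of $x^{f+1}-x^f$, oriented so that the flow increases on its forward arcs, with cost $\gamma<0$. Reversing it gives a cycle of cost $-\gamma>0$, so it is not a negative residual cycle for $x^{f+1}$. It is also residual only with respect to $u^*$: on the arcs where $C$ runs backward you know $x^{f+1}_e<x^f_e\le u^f_e$, but not $x^{f+1}_e<u^{f+1}_e$. Symmetrically, $C$ is a negative residual cycle for $x^f$, but again only with respect to $u^*$, since its forward arcs may be saturated under $u^f$. Thus neither the optimality of $x^f$ for $u^f$ nor that of $x^{f+1}$ for $u^{f+1}$ is contradicted. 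A negative cycle has to be excluded or handled by a genuinely additional argument, for instance one exploiting that $u^f$ and $u^{f+1}$ are worst-case scenarios, or one that re-chooses the decomposition. The one-line justification in the proof of \cref{propParadoxNec} (``involving the cycle $C$'') has the same gap, so citing it does not close it.
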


\begin{proof}
The theorem follows directly from \cref{propParadoxNec,propParadoxSuf}.  
\end{proof}

It is an open question whether the condition can be weakened to the statement that the paradox occurs if and only if there is an improving path in the network, regardless of some initial flow. Here, an \emph{improving path} for a graph $G = (V,E)$ with a nonnegative cost function $c \colon E \rightarrow \R^+$ refers to a sequence of arcs $e_1=(s,v_1),\,e_2,\dots,\,e_k,\,e_{k+1}=(v_{k},t) \in E$ that form an undirected $s$--$t$ path, where the total cost of the path is negative (a forward arc $e$ contributes $c_e$ to the total cost, whereas a backward arc $e$ contributes $-c_e$). In \cref{propParadoxKn} we prove the weakened condition for complete graphs. First, we observe a characterization of improving paths with the most negative costs on complete graphs, which will be helpful for the subsequent proofs.

\begin{lemma}\label{prop:complete:back}
For a directed complete graph, the improving path with the most negative cost corresponds to a sequence of arcs $e_1=(s,v_1),\,e_2,\dots,\,e_k,\,e_{k+1}=(v_{k},t)$, where all arcs $e_2, \dots, e_k$ can be chosen as backward arcs.
\end{lemma}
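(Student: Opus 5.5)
Take an improving path $P$ with the most negative cost. Among all such minimizers, choose one with the fewest forward arcs in its interior $e_2,\dots,e_k$. We then show that this interior contains no forward arc at all. The argument is a local exchange: if some interior arc is forward, we replace it by backward arcs without increasing the cost. Since all costs are nonnegative, backward arcs can only decrease the total, so the replacement will either strictly lower the cost or keep it equal while reducing the number of forward arcs. Either outcome contradicts the choice of $P$.

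Concretely, let $P$ visit $s, v_1, \dots, v_k, t$, and let $e_j$ for some $2\le j\le k$ be a forward arc $(v_{j-1}, v_j)$ with cost $c_{v_{j-1}v_j}\ge 0$. Since the graph is complete, the reverse arc $(v_j, v_{j-1})$ also exists. Traversing it backward contributes $-c_{v_j v_{j-1}}\le 0 \le c_{v_{j-1}v_j}$. Replacing $e_j$ by this reversed arc therefore yields an undirected $s$--$t$ sequence on the same node set whose cost is not larger. The path stays simple, because the node sequence is unchanged. The first and last arcs $(s,v_1)$ and $(v_k,t)$ are untouched, so the new sequence still has the required form.

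If the cost strictly decreases, this contradicts minimality of the cost. If it is equal, the number of forward interior arcs drops by one, which contradicts the tie-breaking choice. Hence all of $e_2,\dots,e_k$ are backward. Equivalently, one can simply iterate the replacement over all forward interior arcs, and the cost never increases.

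The only point needing care is the degenerate case $k=1$, i.e.\ the path $s\to v_1\to t$. It has nonnegative cost and so is never improving, and the statement about interior arcs is vacuous there anyway. A direct arc $(s,t)$ is likewise nonnegative. The main obstacle is merely checking that the modified sequence still counts as a valid undirected path. Completeness of the graph guarantees that every reversed arc exists, and the node sequence is unchanged, so no repeated nodes arise.
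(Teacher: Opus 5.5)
Your proof is correct and follows the paper's exchange argument: replace each interior forward arc $(v_{j-1},v_j)$ by the reverse arc $(v_j,v_{j-1})$ traversed backward, which cannot increase the cost because all costs are nonnegative. Two details are slightly more careful than the paper's version: you compute the cost change as $-(c_{v_{j-1}v_j}+c_{v_jv_{j-1}})$ rather than the paper's $-2c_{e_i}$, which tacitly assumes symmetric costs, and your tie-breaking on the number of forward arcs makes the ``can be chosen'' wording precise.
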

\begin{proof}
    Let $P$ denote the improving path with the most negative cost and assume that there is a forward arc $e_i = (v_{i-1}, v_i) \in E$ on $P$ for some $i \in \{2, \dots, k\}$. When replacing $e_i$ with the backward arc $(v_{i}, v_{i-1}) \in E$, the total cost of the path $P$ changes by $-2\cdot c_{e_i}$. However, since all arc costs are nonnegative, using the backward arc instead of the forward arc cannot increase the cost of $P$.
\end{proof}

\begin{theorem}[Paradox for the complete graph]\label{propParadoxKn}
Let $G = (V, E)$ be the directed complete graph with a source~$s \in V$, sink $t \in V$ and a nonnegative cost function $c\colon E \to \R^+$. Then, the following statements are equivalent:
\begin{enumerate}[label={(\alph*})]
    \item There exist interval capacities $\inum{u}\colon E \to \IR$ and a flow size $f$ for which the more-for-less paradox occurs.
    \item There is an improving path from $s$ to~$t$ with a negative cost.
\end{enumerate}
\end{theorem}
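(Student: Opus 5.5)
We prove the two implications separately, using \cref{thm:paradox} for the reduction to augmenting paths and \cref{prop:complete:back} to normalize the shape of an improving path.

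\emph{(a) $\Rightarrow$ (b).} If the paradox occurs, then \cref{propParadoxNec} yields a flow $x^f$, a scenario $u$ and an augmenting path for $x^f$ with negative cost. An augmenting path is an undirected $s$--$t$ path with negative cost, where forward arcs contribute $c_e$ and backward arcs contribute $-c_e$. The definition of an improving path additionally asks that the first arc be $(s,v_1)$ and the last arc be $(v_k,t)$, both taken forward.

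To obtain these end arcs, we use that the graph is complete, so every needed arc exists. Suppose the augmenting path starts with a backward arc $(v_1,s)$. We replace it by the forward arc $(s,v_1)$, and symmetrically at the end we replace a backward arc $(t,v_k)$ by $(v_k,t)$. Such a replacement raises the cost, so the result need not stay negative, and this step is the main obstacle.

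The way around it is to choose the improving path freely rather than reuse the augmenting path. Consider the most negative cost over all undirected $s$--$t$ paths whose first and last arcs are forward. By \cref{prop:complete:back}, we may assume all inner arcs are backward. Now apply the proof of \cref{propParadoxNec} again: the difference flow $x^{f+1}-x^f$ has size 1 and negative cost.

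Since the arc costs are nonnegative, the unit of flow must leave $s$ through some arc $(s,v_1)$ that carries positive flow in this difference, and must enter $t$ through some $(v_k,t)$. I would try to decompose the difference into a path whose end arcs are forward, plus cycles of nonnegative cost (by optimality of $x^{f+1}$). Because the difference leaves $s$ and enters $t$, its path component can be chosen to start with an outgoing arc at $s$ and end with an incoming arc at $t$. This path is then an improving path of negative cost. If this decomposition argument fails, the fallback is a direct cost comparison: compare $c_{s v_1}+c_{v_k t}-\sum c(\text{backward arcs})$ against the cost of the difference flow.

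\emph{(b) $\Rightarrow$ (a).} Take an improving path $P$ with negative cost. By \cref{prop:complete:back}, $P$ has the form $(s,v_1)$, then backward arcs $(v_2,v_1),\dots,(v_k,v_{k-1})$, then $(v_k,t)$. Define the flow $x^f$ of size $f=1$ sending one unit along the directed path $s\to v_k\to v_{k-1}\to\dots\to v_1\to t$, which exists because the graph is complete. Take capacities $u=\max\{x^f,\text{indicator of } (s,v_1),(v_k,t)\}$.

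Then $P$ is augmenting for $x^f$: the forward arcs $(s,v_1)$ and $(v_k,t)$ carry no flow (when $k\ge 1$ and these arcs are distinct from the arcs of $x^f$), and each backward arc carries flow 1. The cost of $P$ is negative, so \cref{propParadoxSuf} (or \cref{thm:paradox}(b)$\Rightarrow$(a)) yields interval capacities exhibiting the paradox.

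The degenerate cases still need to be handled. The case $k=0$ gives the single arc $(s,t)$ with nonnegative cost, so it cannot occur. When the arcs $(s,v_k)$ or $(v_1,t)$ coincide with arcs of $P$ (for $k=1$), $P$ is $(s,v_1),(v_1,t)$, which has nonnegative cost and so is excluded.
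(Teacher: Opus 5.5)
Your (b)$\Rightarrow$(a) is sound and leaner than the paper's. You send a single unit along $s\to v_k\to\dots\to v_1\to t$ and invoke Proposition~\ref{propParadoxSuf}. The paper instead routes a separate unit through each backward arc, so its $f$ equals the number of backward arcs, and checks the paradox by hand. Your instance is also easy to check by hand. This is worth doing, because the argument of Proposition~\ref{propParadoxSuf} uses that $x^f$ is optimal for the capacities $x^f$; that holds for your simple path but can fail for flows carrying cycles. Put intervals $[0,1]$ on the arcs of $x^f$ and of $P$, and $[0,0]$ elsewhere.
\begin{itemize}
\item For $f=1$, the scenario opening exactly the arcs of $x^f$ admits only $x^f$.
\item For $f=2$, the only feasible flow is $x^f$ augmented along $P$, namely $s\to v_1\to t$ plus $s\to v_k\to t$.
\end{itemize}
Hence $c_w^{2}=c^Tx^f+(\text{cost of }P)<c^Tx^f\le c_w^{1}$.

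The genuine gap is in (a)$\Rightarrow$(b). The paper simply declares the negative augmenting path from Proposition~\ref{propParadoxNec} to be an improving path. You correctly notice that the definition also demands forward end arcs $(s,v_1)$ and $(v_k,t)$, but your repair is only sketched and its key claim is false. Nonnegativity of $c$ does not force $x^{f+1}-x^f$ to leave $s$ through an arc $(s,v_1)$ with positive entry. If $x^f$ sends flow into $s$, the difference can be negative on some $(v,s)$, and a decomposition path may start with that backward arc.

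A cost comparison like your fallback cannot fix this using only the data in the statement of Proposition~\ref{propParadoxNec}. Take the complete digraph on $\{s,a,t\}$ with $c_{as}>c_{at}$, and the flow of size $1$ with $x_{st}=2$, $x_{ta}=x_{as}=1$. For $u_{at}>0$ it has the negative augmenting path formed by $(a,s)$ backward and $(a,t)$ forward. Yet both improving-path candidates, $(s,t)$ and $(s,a),(a,t)$, have nonnegative cost.

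The missing idea is a normalization that uses optimality: delete all cycles from flow decompositions of the worst-case optimal flows $x^f$ and $x^{f+1}$.
\begin{itemize}
\item This only lowers flows and, since $c\ge 0$, does not raise costs, so both flows stay optimal in their scenarios.
\item Afterwards no flow enters $s$ or leaves $t$, so $x^{f+1}-x^f$ vanishes on arcs into $s$ and out of $t$.
\item Hence every $s$--$t$ path in a conformal decomposition of $x^{f+1}-x^f$ starts with a forward $(s,v_1)$ and ends with a forward $(v_k,t)$. Such a path is an improving path whenever its cost is negative.
\end{itemize}

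Your justification for the remaining step also does not work as stated. You claim the cycles of that decomposition have nonnegative cost ``by optimality of $x^{f+1}$''. But pushing more of $x^{f+1}$ around a negative conformal cycle need not be feasible for $u^{f+1}$, and pushing $x^f$ around it need not be feasible for $u^f$. The proof of Proposition~\ref{propParadoxNec} makes the same appeal, so citing it inherits this step rather than settling it.
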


\begin{proof}
\quo{(a) $\Rightarrow$ (b)}
By \cref{propParadoxNec}, there exists a flow $x^f$ feasible for a scenario $u \in \inum{u}$ and an augmenting $s$--$t$ path $P$ for $x^f$ with a negative cost. The path $P$ is also an improving path for the network.

\quo{(b) $\Rightarrow$ (a)}
Let $P$ be an improving path from $s$ to~$t$ with the most negative cost and let $f$ be the number of backward arcs on~$P$. For each backward arc $e=(i,j)$ with $i,j\not\in\{s,t\}$, set the flow of unit size along the path $s$--$j$--$i$--$t$. In total, we obtain a flow $x^f$ of size~$f$. 

We construct a scenario $u$ with the corresponding capacities $u_e = 1$ for all arcs used by the flow $x^f$, and capacities $u_e = 0$ for all remaining arcs. Moreover, we set the interval capacities $\inum{u}_e\coloneqq[0,1]$ for those arcs that are used in the above procedure (i.e., those that are involved in the flow $x^f$ or that are on the improving path~$P$); the others are set to $\inum{u}_e\coloneqq[0,0]$. 

Now, by the construction, we have a scenario $u$ of the minimum cost flow problem with a requested flow of size~$f$ and the corresponding optimal flow $x^f$. This is also the worst case scenario for a flow of size~$f$ with respect to the interval capacities $\inum{u}$, since otherwise we have a contradiction that $P$ is the most negative path with respect to the cost. Therefore, the cost of $x^f$ is the worst optimal value $c_w^f$.

For requested flow size $f+1$, there exists only one feasible flow $x^{f+1}$ (with the worst optimal value $c_w^{f+1}$), which results from $x^f$ by using the augmenting path~$P$. Since the path~$P$ has negative cost, the cost of $x^{f+1}$ is lower than the cost of $x^f$ and we have $c_w^{f+1} < c_w^{f}$, meaning that the paradox occurs.
\end{proof}

\subsection{Immune cost matrices}

Finally, we can also examine properties of the cost matrices that do not allow the more-for-less paradox for any setting of the interval capacities, i.e. matrices that are \emph{immune} against the paradox. \cref{prop:immune} shows that if we consider all such immune cost matrices (for the complete graph), the set forms a convex polyhedral cone.

\begin{proposition}\label{prop:immune}
Let $\mna{C}$ be the set of all cost matrices for the directed complete graph on $n$ nodes such that the more-for-less paradox occurs for no interval capacities. Then $\mna{C}$ is a convex polyhedral cone.
\end{proposition}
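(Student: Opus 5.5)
We will use \cref{propParadoxKn} to turn immunity into a purely cost-based condition. By that theorem, a nonnegative cost matrix $c$ on the directed complete graph is immune if and only if no improving $s$--$t$ path has negative cost. Every undirected $s$--$t$ path $P$ on the complete graph, together with an orientation pattern for its arcs, defines a linear form
\[
\ell_P(c)=\sum_{e\in P^{\mathrm f}} c_e-\sum_{e\in P^{\mathrm b}} c_e .
\]
Here $P^{\mathrm f}$ and $P^{\mathrm b}$ are the forward and backward arcs of $P$. In an improving path the first arc $(s,v_1)$ and the last arc $(v_k,t)$ are forward, while each intermediate pair of consecutive nodes may be traversed either by the forward arc or by the reversed arc, which exists in the complete graph. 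Hence
\[
\mna{C}=\{c\in\R^{E}\mmid c\ge 0,\ \ell_P(c)\ge 0 \text{ for every improving path } P\}.
\]

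The first step is to justify this set equality carefully. The constraint $c\ge0$ comes from the standing assumption that costs are nonnegative, and the equivalence in \cref{propParadoxKn} holds for each fixed nonnegative $c$. Both directions of the equality therefore reduce directly to that theorem.

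The second step is finiteness. There are finitely many simple undirected $s$--$t$ paths in $K_n$ and finitely many orientation choices for each, so the system is a finite family of homogeneous linear inequalities. By \cref{prop:complete:back}, we may restrict to paths whose intermediate arcs are all backward. This gives the inequalities
\[
c_{s v_1}+c_{v_k t}-\sum_{i=1}^{k-1} c_{v_{i+1} v_i}\ge 0
\]
over sequences of distinct intermediate nodes $v_1,\dots,v_k$. This restriction is convenient but not necessary for the claim.

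Finally, a set defined by finitely many homogeneous linear inequalities $\{c\mmid Ac\ge 0\}$ is a convex polyhedral cone. It contains $0$, is closed under nonnegative combinations, and is polyhedral by definition.

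The main obstacle is the first step, and it is subtle only in the degenerate cases. One must check that \cref{propParadoxKn} applies verbatim, including the case where $s$ and $t$ are adjacent, which gives a one-arc path with nonnegative cost. One must also make sure that the quantifier \quo{for no interval capacities} corresponds exactly to negating statement (a) of that theorem. Once that is settled, the rest is immediate.
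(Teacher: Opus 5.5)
Your proposal is correct and follows essentially the same route as the paper: by \cref{propParadoxKn}, immunity is equivalent to every undirected $s$--$t$ path (with its forward/backward pattern) having nonnegative cost, and these finitely many homogeneous linear inequalities cut out a polyhedral cone. The only addition is that you list the constraint $c \ge 0$ explicitly, which the paper leaves implicit in its standing assumption; your optional reduction to all-backward paths via \cref{prop:complete:back} is valid but not needed.
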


\begin{proof}
By \cref{propParadoxKn}, for given costs the paradox does not occur if and only if every path from $s$ to~$t$ is non-improving. There are finitely many paths from $s$ to~$t$ and for each of them the condition on non-improving cost has the form of a homogeneous linear inequality. 
\end{proof}

By the proof of \cref{prop:immune}, the set $\mna{C}$ admits a description by means of a system of linear inequalities. However, the number of inequalities required for the description is exponential. This necessity of such a large number of inequalities is justified by the following intractability result.

\begin{proposition}
Given costs $c$, the decision problem $c\in\mna{C}$ is strongly co-NP-hard.
\end{proposition}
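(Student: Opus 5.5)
By \cref{propParadoxKn}, $c\notin\mna{C}$ if and only if the complete graph with costs $c$ has an improving $s$--$t$ path of negative cost. By \cref{prop:complete:back}, we may restrict attention to paths of the form $(s,v_1)$, then backward arcs $(v_2,v_1),(v_3,v_2),\dots,(v_k,v_{k-1})$, then $(v_k,t)$. The cost of such a path is
\[
c_{s v_1} - \sum_{i=2}^{k} c_{v_i v_{i-1}} + c_{v_k t}.
\]
The complement problem is therefore: given $c$, decide whether there exist distinct inner nodes $v_1,\dots,v_k$ for which $\sum_{i=2}^k c_{v_i v_{i-1}} > c_{sv_1} + c_{v_k t}$. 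I will show this complement is strongly NP-hard; then membership $c\in\mna{C}$ is strongly co-NP-hard.

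The reduction is from Hamiltonian path in a directed graph $H=(W,A)$ with $|W|=N$, which is strongly NP-hard because it has no numbers. Build the complete digraph on $W\cup\{s,t\}$ with the following costs. Set $c_{ab}=1$ whenever $(b,a)\in A$; the reversal is needed because the path traverses the arc $(v_i,v_{i-1})$ backward, so a walk $v_1\to v_2\to\dots$ in $H$ corresponds to arcs $(v_i,v_{i-1})$ that must carry cost $1$. All other arcs among the nodes of $W$ get cost $0$. Set $c_{sv}=c_{vt}=(N-1.5)/2$ for every $v\in W$, or scale everything by $2$ to keep the data integral. Any arcs entering $s$ or leaving $t$, as well as the arc $(s,t)$, get a large cost such as $N$. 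All values are polynomially bounded, so strong hardness is preserved.

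With these costs, a path with inner nodes $v_1,\dots,v_k$ has cost $(N-1.5) - \#\{i : (v_{i-1},v_i)\in A\}$. This is negative only if at least $N-1$ consecutive pairs are arcs of $H$, which forces $k=N$ and all pairs to be arcs, i.e. a Hamiltonian path. Conversely, a Hamiltonian path yields cost $-0.5<0$.

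The main obstacle is ruling out improving paths that do not have the canonical form: paths that pass through $s$ or $t$ as inner nodes, or that use the direct arc $(s,t)$. \cref{prop:complete:back} already handles forward inner arcs, since it shows the most negative path may be taken in backward form, so negativity of any path implies negativity of a canonical one. The remaining worry is paths routed through $s$ or $t$ internally, and the backward use of arcs incident to $s$ or $t$. I would handle these by noting that an undirected $s$--$t$ path by definition visits $s$ and $t$ only at its ends, and by giving $(s,t)$ a large positive cost so that it cannot be improving.
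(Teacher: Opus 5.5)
Your proof is correct and uses the same skeleton as the paper's proof. You pass to the complement via \cref{propParadoxKn}, use \cref{prop:complete:back} to restrict to paths of the form forward $(s,v_1)$, then backward arcs, then forward $(v_k,t)$, and encode a path problem with unit costs on the arcs of the input graph. The real difference is the pricing of the end arcs. The paper reduces from longest path, gives every arc at $s$ and $t$ cost $0$, and only states that the most negative improving path corresponds to a longest path. Taken literally, that instance is immune if and only if the input graph has no arcs, so it transfers the optimization version and leaves the threshold implicit. Your uniform costs $c_{sv}=c_{vt}=(N-1.5)/2$ supply exactly this threshold. They turn the correspondence into a genuine many-one reduction from Hamiltonian path to the complement of $c\in\mna{C}$, which is what strong co-NP-hardness of the decision problem requires.

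Two small corrections are needed.

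First, doubling does not make $N-1.5$ integral. Multiply by $4$ instead, giving $c_{sv}=c_{vt}=2N-3$ and inner costs in $\{0,4\}$; alternatively, after doubling, split the threshold as $c_{sv}=N-1$ and $c_{vt}=N-2$. You also need $N\ge 2$ so that all costs are nonnegative.

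Second, the large costs on arcs entering $s$ or leaving $t$ play no role. Those arcs never occur in an improving path, because the paper's definition fixes the end arcs as the forward arcs $(s,v_1)$ and $(v_k,t)$. The arc $(s,t)$ alone has cost $c_{st}\ge 0$ and so is never improving. It is this definition, not your cost choice, that disposes of your worry about backward use of arcs at $s$ and $t$. Had backward end arcs been admitted, the cost $N$ on $(v,s)$ would break the reduction rather than protect it. The path $s$--$v$--$t$ using $(v,s)$ backward would cost $-N+(N-1.5)/2<0$ for every $H$.

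A minor remark: the reversal $c_{ab}=1$ for $(b,a)\in A$ is harmless but unnecessary. Without it, a canonical path with $k=N$ simply reads as the directed Hamiltonian path $v_N\to\dots\to v_1$ in $H$.
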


\begin{proof}
To check for $c\in\mna{C}$, we just need to check if there is no improving path with a negative cost. As shown in \cref{prop:complete:back}, for the complete graph, the most negative improving path consists of a forward arc $(s,i)$, then a series of backward arcs, and finally the forward arc $(j,t)$.  Thus the most negative improving path has the cost \[ \min_{i,j\not\in\{s,t\}} \big(c_{si}-ldp(i,j)+c_{jt}\big),\] where $ldp(i,j)$ is the largest cost of the directed path from $j$ to~$i$. 

Therefore, we construct a reduction from the strongly NP-hard problem of the longest path in a graph~$G$. We set the cost of the arcs in $G$ to be~1, and then make the graph complete by adding the arcs of cost~0. Eventually, we add nodes $s$ and~$t$ and all arcs from and to $s$ and $t$ with cost~0. Now, the most negative improving path from $s$ to~$t$ corresponds to the longest path in the original graph~$G$.
\end{proof}

%%%%%%%%%%%%%%%%%%%%%%%%%%%%%%%%%%%%%%%%%%%%%%%%%%%%%%%%%%%%%%% 
\section{Conclusion}\label{sec:conclusion}
We studied the properties of the worst optimal value and the corresponding worst-case scenarios in minimum cost network flow problems with interval uncertainty in arc capacities, where each capacity may take any value within its prescribed bounds. We first established and proved fundamental convexity properties of both the set of feasible capacities and the optimal value function. We then demonstrated that computing the worst optimal value leads to a strongly NP-hard problem, and remains NP-hard even for the class of series–parallel graphs and for the class of graphs with linear number of paths.

Further, we proposed an exact method for computing the worst optimal value based on a mixed-integer linear programming formulation based on the theory of duality and complementary slackness in linear programming. For series–parallel graphs, we developed a pseudopolynomial algorithm whose complexity is polynomial in the number of nodes, arcs, and the required flow size.

We also analyzed the structure of worst-case scenarios, proving that in an extremal worst scenario, the arcs whose capacities are not set to their interval bounds form a forest. The result implies that the number of such arcs is at most $n-1$, where $n$ is the number of nodes in the network. We also constructed a class of instances for which this bound is attained, thereby proving its tightness. 

Finally, we discussed the more-for-less paradox, which arises in instances where increasing the required flow size can lead to decreasing the worst-case optimal cost. We provided a general characterization of this paradox based on augmenting paths for feasible flows, and established a stronger result for complete graphs. Moreover, we also addressed the properties of cost matrices immune to the paradox and proved that testing whether a given cost matrix is immune is a strongly co-NP-hard problem.

Future research may focus on a deeper analysis of the structural properties of worst-case scenarios. 
An interesting task is the identification of arcs that are provably included in, or excluded from, at least one or every worst-case scenario, together with sufficient or necessary conditions for such classifications. 

Furthermore, given that the problem has been shown to be NP-hard in general, it is of particular interest to investigate the computation of the worst optimal value and the corresponding worst scenarios for special classes of graphs (such as acyclic or sparse graphs and networks with limited interval uncertainty), as well as to develop efficient approximation methods for estimating the worst optimal value.

\section*{CRediT authorship contribution statement}

\textbf{Miroslav Rada}: Conceptualization,  Methodology, Writing - Original Draft, Writing - Review \& Editing, Validation. 
\textbf{Milan Hladík}: Conceptualization, Methodology, Writing - Original Draft, Writing - Review \& Editing, Supervision. 
\textbf{Elif Radová Garajová}: Conceptualization, Methodology, Writing - Original Draft, Writing - Review \& Editing, Validation.
\textbf{Francesco Carrabs}: Conceptualization, Methodology, Writing - Review \& Editing, Validation. 
\textbf{Raffaele Cerulli}: Conceptualization, Methodology, Writing - Review \& Editing, Supervision.
\textbf{Ciriaco D'Ambrosio}: Conceptualization, Methodology, Writing - Review \& Editing, Validation.

\section*{Declaration of competing interest}
The authors declare that they have no known competing financial interests or personal relationships that could have appeared to influence the work reported in this paper.

\section*{Data availability}
Not applicable.

\section*{Funding} 
M. Rada and E. Radová Garajová were supported by the Czech Science Foundation under Grant 23-07270S. M. Hladík was supported by the Czech Science Foundation under Grant 25-15714S.

%%%%%%%%%%%%%%%%%%%%%%%%%%%%%%%%%%%%%%%%%%%%%%%%%%%%%%%%%%%%%%% 
% REFERENCES
%%%%%%%%%%%%%%%%%%%%%%%%%%%%%%%%%%%%%%%%%%%%%%%%%%%%%%%%%%%%%%% 

\bibliographystyle{elsarticle-harv}
\bibliography{int_flow}

@article{Hla2018d,
 author = "Milan Hlad\'{\i}k",
 title = "The worst case finite optimal value in interval linear programming",
 journal = "Croat. Oper. Res. Rev.",
 fjournal = "Croatian Operational Research Review",
 volume = "9",
 number = "2",
 pages = "245-254",
 year = "2018",
doi="10.17535/crorr.2018.0019",
}

@book{ahuja_network_1993,
	address = {Upper Saddle River, NJ},
	edition = {1st},
	title = {Network {Flows}: {Theory}, {Algorithms}, and {Applications}},
	isbn = {978-0-13-617549-0},
	shorttitle = {Network {Flows}},
	language = {English},
	publisher = {Pearson},
	author = {Ahuja, Ravindra and Magnanti, Thomas and Orlin, James},
	month = feb,
	year = {1993},
}

@Inbook{Smith2013,
author="Smith, J. Cole
and Prince, Mike
and Geunes, Joseph",
editor="Pardalos, Panos M.
and Du, Ding-Zhu
and Graham, Ronald L.",
title="Modern Network Interdiction Problems and Algorithms",
bookTitle="Handbook of Combinatorial Optimization",
year="2013",
publisher="Springer New York",
address="New York, NY",
pages="1949--1987",
isbn="978-1-4419-7997-1",
doi="10.1007/978-1-4419-7997-1_61",
}

@inproceedings{Hoppmann:FindingMaximumMinimum:2018,
	address = {Cham},
	title = {Finding {Maximum} {Minimum} {Cost} {Flows} to {Evaluate} {Gas} {Network} {Capacities}},
	isbn = {978-3-319-89920-6},
	doi = {10.1007/978-3-319-89920-6_46},
	language = {en},
	booktitle = {Operations {Research} {Proceedings} 2017},
	publisher = {Springer International Publishing},
	author = {Hoppmann, Kai and Schwarz, Robert},
	editor = {Kliewer, Natalia and Ehmke, Jan Fabian and Borndörfer, Ralf},
	year = {2018},
	pages = {339--345},
}

@article{Ple1979,
 author = {J. Plesn{\'i}k},
 title = {The {NP}-completeness of the hamiltonian cycle problem in planar diagraphs with degree bound two},
 journal = {Information Processing Letters},
 volume = {8},
 number = {4},
 pages = {199-201},
 year = {1979},
 doi = {10.1016/0020-0190(79)90023-1},
}

@article{Hoppmann:2021:MinCostFlow,
author = {Hoppmann-Baum, Kai},
title = {On the Complexity of Computing Maximum and Minimum Min-Cost-Flows},
journal = {Networks},
volume = "79",
number = "2",
pages = {236-248},
month= "March",
year = {2022},
doi = {10.1002/net.22060paper},
}

@book{Kasperski2008,
author = {Kasperski, Adam},
year = {2008},
title = {Discrete Optimization with Interval Data - Minmax Regret and Fuzzy Approach},
series = "Studies in Fuzziness and Soft Computing",
volume = {228},
publisher = "Springer",
address = "Berlin, Heidelberg",
isbn = {978-3-540-78483-8},
doi = {10.1007/978-3-540-78484-5}
}

@article{MCFparadox,
author = {A. Gupta and M. C. Puri},
title = {``{M}ore(Same)-for-Less'' Paradox In Minimal Cost Network Flow Problem},
journal = {Optimization},
volume = {33},
number = {2},
pages = {167-177},
year  = {1995},
publisher = {Taylor & Francis},
doi = {10.1080/02331939508844073},
}

@article{CARRABS2021102492,
title = {An improved heuristic approach for the interval immune transportation problem},
journal = {Omega},
volume = {104},
pages = {102492},
year = {2021},
issn = {0305-0483},
doi = {10.1016/j.omega.2021.102492},
author = {Francesco Carrabs and Raffaele Cerulli and Ciriaco D’Ambrosio and Federico {Della Croce} and Monica Gentili}
}

@InProceedings{Singh-sensAnalysis,
author="Singh, Sanjeet
and Gupta, Pankaj
and Bhatia, Davinder",
editor="Gervasi, Osvaldo
and Gavrilova, Marina L.
and Kumar, Vipin
and Lagan{\'a}, Antonio
and Lee, Heow Pueh
and Mun, Youngsong
and Taniar, David
and Tan, Chih Jeng Kenneth",
title="On Multiparametric Sensitivity Analysis in Minimum Cost Network Flow Problem",
booktitle="Computational Science and Its Applications -- ICCSA 2005",
year="2005",
publisher="Springer",
address="Berlin, Heidelberg",
pages="1190--1202",
isbn="978-3-540-32309-9",
doi="10.1007/11424925_124"
}

@article{wollmer:1970:algorithmsfortargetingstrikes,
author = {Wollmer, Richard D.},
title = {Algorithms for Targeting Strikes in a Lines-of-Communication Network},
journal = {Operations Research},
volume = {18},
number = {3},
pages = {497-515},
year = {1970},
doi = {10.1287/opre.18.3.497}
}

@article{HASHEMI20061200,
title = {Combinatorial algorithms for the minimum interval cost flow problem},
journal = {Applied Mathematics and Computation},
volume = {175},
number = {2},
pages = {1200-1216},
year = {2006},
issn = {0096-3003},
doi = {10.1016/j.amc.2005.08.044},
author = {S. Mehdi Hashemi and Mehdi Ghatee and Ebrahim Nasrabadi}
}

@article{cenciarelli_polynomial-time_2019,
	title = {A {Polynomial}-{Time} {Algorithm} for {Detecting} the {Possibility} of {Braess} {Paradox} in {Directed} {Graphs}},
	volume = {81},
	issn = {1432-0541},
	doi = {10.1007/s00453-018-0486-6},
	number = {4},
	journal = {Algorithmica},
	author = {Cenciarelli, Pietro and Gorla, Daniele and Salvo, Ivano},
	month = apr,
	year = {2019},
	pages = {1535--1560},
}

@InProceedings{wojtcak:2018:knapsackstronglynphard,
author="Wojtczak, Dominik",
editor="Fomin, Fedor V.
and Podolskii, Vladimir V.",
title="On Strong {NP}-Completeness of Rational Problems",
booktitle="Computer Science -- Theory and Applications",
year="2018",
publisher="Springer International Publishing",
address="Cham",
pages="308--320",
isbn="978-3-319-90530-3",
doi="10.1007/978-3-319-90530-3_26"
}

@article{Busing:RobustMinimumCost:2022,
  title = {Robust Minimum Cost Flow Problem under Consistent Flow Constraints},
  author = {B{\"u}sing, Christina and Koster, Arie M. C. A. and Schmitz, Sabrina},
  year = {2022},
  month = may,
  journal = {Annals of Operations Research},
  volume = {312},
  number = {2},
  pages = {691--722},
  issn = {1572-9338},
  doi = {10.1007/s10479-021-04426-0},
  urldate = {2025-05-26},
  langid = {english}
}

@article{Chassein:ComplexityStrictRobust:2019,
  title = {Complexity of Strict Robust Integer Minimum Cost Flow Problems: {{An}} Overview and Further Results},
  shorttitle = {Complexity of Strict Robust Integer Minimum Cost Flow Problems},
  author = {Chassein, Andr{\'e} and Kinscherff, Anika},
  year = {2019},
  month = apr,
  journal = {Computers \& Operations Research},
  volume = {104},
  pages = {228--238},
  issn = {0305-0548},
  doi = {10.1016/j.cor.2018.12.021},
  urldate = {2025-05-26}
}

@article{Smith:SurveyNetworkInterdiction:2020,
  title = {A Survey of Network Interdiction Models and Algorithms},
  author = {Smith, J. Cole and Song, Yongjia},
  year = 2020,
  month = jun,
  journal = {European Journal of Operational Research},
  volume = {283},
  number = {3},
  pages = {797--811},
  issn = {0377-2217},
  doi = {10.1016/j.ejor.2019.06.024},
  urldate = {2025-10-23}
}

@book{Ford:FlowsNetworks:1962,
  title = {Flows in Networks},
  author = {Ford, L. R. and Fulkerson, D. R.},
  year = 1962,
  series = {Research Studies / {{Rand Corporation}}},
  publisher = {Princeton University Press},
  address = {Princeton, N.J},
  isbn = {978-0-691-07962-2},
  langid = {english},
  lccn = {QA265 .F6}
}

@inproceedings{Krile:ApplicationMinimumCost:2004,
  title = {Application of the Minimum Cost Flow Problem in Container Shipping},
  booktitle = {Proceedings. {{Elmar-2004}}. 46th {{International Symposium}} on {{Electronics}} in {{Marine}}},
  author = {Krile, S.},
  year = 2004,
  month = jun,
  pages = {466--471},
  issn = {1334-2630},
  url = {https://ieeexplore.ieee.org/document/1356421/references},
  urldate = {2025-11-13}
}

@article{Slump:NetworkFlowApproach:1982,
  title = {A Network Flow Approach to Reconstruction of the Left Ventricle from Two Projections},
  author = {Slump, Cornelis H and Gerbrands, Jan J},
  year = 1982,
  month = jan,
  journal = {Computer Graphics and Image Processing},
  volume = {18},
  number = {1},
  pages = {18--36},
  issn = {0146-664X},
  doi = {10.1016/0146-664X(82)90097-1},
  urldate = {2025-11-13}
}

@article{Su:MinimumcostNetworkFlow:2013,
  title = {A Minimum-Cost Network Flow Approach to Preemptive Parallel-Machine Scheduling},
  author = {Su, Ling-Huey and Cheng, T. C. E. and Chou, Fuh-Der},
  year = 2013,
  month = jan,
  journal = {Computers \& Industrial Engineering},
  volume = {64},
  number = {1},
  pages = {453--458},
  issn = {0360-8352},
  doi = {10.1016/j.cie.2012.04.020},
  urldate = {2025-11-13}
}

@article{Estes:FacetsStochasticNetwork:2020,
  title = {Facets of the {{Stochastic Network Flow Problem}}},
  author = {Estes, Alexander S. and Ball, Michael O.},
  year = 2020,
  month = jan,
  journal = {SIAM Journal on Optimization},
  volume = {30},
  number = {3},
  pages = {2355--2378},
  publisher = {{Society for Industrial and Applied Mathematics}},
  issn = {1052-6234},
  doi = {10.1137/19M1286049},
  urldate = {2025-11-13}
}

@article{Ghatee:GeneralizedMinimalCost:2008,
  title = {Generalized Minimal Cost Flow Problem in Fuzzy Nature: {{An}} Application in Bus Network Planning Problem},
  shorttitle = {Generalized Minimal Cost Flow Problem in Fuzzy Nature},
  author = {Ghatee, Mehdi and Hashemi, S. Mehdi},
  year = 2008,
  month = dec,
  journal = {Applied Mathematical Modelling},
  volume = {32},
  number = {12},
  pages = {2490--2508},
  issn = {0307-904X},
  doi = {10.1016/j.apm.2007.09.030},
  urldate = {2025-11-13}
}

@inproceedings{Mao:RobustDiscreteOptimization:2009,
  title = {Robust {{Discrete Optimization}} for the {{Minimum Cost Flow Problem}}},
  booktitle = {2009 {{International Workshop}} on {{Intelligent Systems}} and {{Applications}}},
  author = {Mao, Rui and Zhu, Jinfu},
  year = 2009,
  month = may,
  pages = {1--4},
  doi = {10.1109/IWISA.2009.5073102},
  urldate = {2025-11-13}
}

@article{DAmbrosio:OptimalValueRange:2020,
  title = {The Optimal Value Range Problem for the {{Interval}} (Immune) {{Transportation Problem}}},
  author = {D'Ambrosio, C. and Gentili, M. and Cerulli, R.},
  year = 2020,
  month = sep,
  journal = {Omega},
  volume = {95},
  pages = {102059},
  issn = {0305-0483},
  doi = {10.1016/j.omega.2019.04.002},
  urldate = {2023-05-23},
  langid = {english}
}

@inproceedings{Garajova:QuasiextremeReductionInterval:2024,
  title = {A {{Quasi-extreme Reduction}} for~{{Interval Transportation Problems}}},
  booktitle = {Dynamics of {{Information Systems}}},
  author = {Garajov{\'a}, Elif and Rada, Miroslav},
  editor = {Moosaei, Hossein and Hlad{\'i}k, Milan and Pardalos, Panos M.},
  year = 2024,
  pages = {83--92},
  publisher = {Springer Nature Switzerland},
  address = {Cham},
  doi = {10.1007/978-3-031-50320-7_6},
  isbn = {978-3-031-50320-7},
  langid = {english}
}

@article{Garajova:IntervalTransportationProblem:2023,
  title = {Interval Transportation Problem: Feasibility, Optimality and the Worst Optimal Value},
  shorttitle = {Interval Transportation Problem},
  author = {Garajov{\'a}, Elif and Rada, Miroslav},
  year = 2023,
  month = sep,
  journal = {Central European Journal of Operations Research},
  volume = {31},
  number = {3},
  pages = {769--790},
  issn = {1613-9178},
  doi = {10.1007/s10100-023-00841-9},
  urldate = {2023-06-30},
  langid = {english}
}

\end{document}